\newtheorem{theorem}{Theorem}[section]
\newtheorem{definition}{Definition}[section]
\newtheorem{lemma}{Lemma}[section]
\newtheorem{remark}{Remark}[section]
\numberwithin{equation}{section}
\begin{document}
\title[Navier-Stokes-Boltzmann equations]{Existence results for  compressible  radiation hydrodynamic equations with vacuum }

\author{yachun li }
\address[Y. C. Li]{Department of Mathematics and Key Lab of Scientific and Engineering Computing (MOE), Shanghai Jiao Tong University,
Shanghai 200240, P.R.China} \email{\tt ycli@sjtu.edu.cn}

\author{Shengguo Zhu}
\address[S. G. Zhu]{Department of Mathematics, Shanghai Jiao Tong University,
Shanghai 200240, P.R.China; School of Mathematics, Georgia Tech,
Atlanta 30332, U.S.A.}
\email{\tt zhushengguo@sjtu.edu.cn}

\begin{abstract}In this paper, we consider the three-dimensional compressible isentropic  radiation hydrodynamic (RHD) equations. The existence of unique local strong solutions is firstly proved when the initial data are arbitrarily large, contain vacuum and satisfy some initial layer compatibility condition. The initial mass density does not need to be bounded away from zero and may vanish in some open set. We also prove that if the initial vacuum is not so irregular, then the initial layer compatibility condition  is necessary and sufficient to guarantee the existence of a unique strong solution. Finally, we establish a blow-up criterion for the strong solution that we obtained. The similar results also hold for the barotropic flow with general pressure law $p_m=p_m(\rho)\in C^1(\mathbb{\overline{R}}^+)$.
\end{abstract}

\date{Dec. 05, 2013}
\keywords{Radiation, Navier-Stokes-Boltzmann equations, Strong solutions, Vacuum, Blow-up criterion.\\
}

\maketitle

\section{Introduction}

The system of radiation hydrodynamic equations appears  in high-temperature plasma physics \cite{sjx} and  in various astrophysical contexts \cite{kr}. The couplings between fluid field and radiation field involve momentum source and energy source depending on the specific radiation intensity driven by the so-called radiation  transfer  equation \cite{gp}.
Suppose that the matter is in local thermodynamical equilibrium (LTE), the coupled system  of Navier-Stokes-Boltzmann (RHD) equations  for the mass density $\rho(t,x)$, the velocity $u(t,x)=(u^{(1)},u^{(2)},u^{(3)})$ of the fluid and the specific radiation intensity $I(v,\Omega,t,x)$ in three-dimensional space reads as \cite{gp}
\begin{equation}
\label{eq:1.1}
\begin{cases}
\displaystyle
\frac{1}{c}I_t+\Omega\cdot\nabla I=A_r,\\[10pt]
\displaystyle
\rho_t+\text{div}(\rho u)=0,\\[10pt]
\displaystyle
\left(\rho u+\frac{1}{c^{2}}F_r\right)_t+\text{div}(\rho u\otimes u+P_r)
  +\nabla p_m =\text{div}\mathbb{T},
\end{cases}
\end{equation}
where  $t\geq 0$ and $x\in \mathbb{R}^3$ are the time and space variables, respectively. $p_m$ is the material pressure satisfying the  equation of state:
\begin{equation}
\label{eq:1.3}
p_m=A\rho^{\gamma},
\end{equation}
where $A>0$ and $\gamma>1 $ are both constants, $\gamma $ is the adiabatic exponent.
$\mathbb{T}$ is the viscosity stress tensor given by
\begin{equation}
\label{eq:1.4}
\mathbb{T}=\mu(\nabla u+(\nabla u)^\top)+\lambda \text{div}u\,\mathbb{I}_3,
\end{equation}
where $\mathbb{I}_3$ is the $3\times 3$ unit matrix,
$\mu$ is the shear viscosity coefficient, $\lambda+\frac{2}{3}\mu$ is the bulk viscosity coefficient, $\mu$ and $\lambda$  are both real constants satisfying
\begin{equation}
\label{eq:1.5}
  \mu > 0, \quad \lambda+\frac{2}{3}\mu \geq 0
  \end{equation}
which ensure the ellipticity of the Lam$\acute{  \text{e} }$ operator defined by
\begin{equation}\label{lame}
\displaystyle
Lu=-\text{div}\mathbb{T}=-\mu\triangle u-(\lambda+\mu)\nabla \text{div} u.
\end{equation}
$v$ and $\Omega$ are radiation variables.  $v\in \mathbb{R}^+$ is the frequency of photon, and $\Omega\in S^2 $ is the travel direction of photon. The radiation flux $F_r$ and  the radiation pressure tensor $P_r$ are defined by
\begin{equation*}
\displaystyle
F_r=\int_0^\infty \int_{S^{2}}  I(v,\Omega,t,x)\Omega \text{d}\Omega \text{d}v,\
P_r=\frac{1}{c}\int_0^\infty \int_{S^{2}}  I(v,\Omega,t,x)\Omega\otimes\Omega\text{d}\Omega \text{d}v,
\end{equation*}
where $S^2$ is the unit sphere in $\mathbb{R}^3$. The collision term on the right-hand side of the radiation transfer equation is
$$
A_r=S-\sigma_aI
+\int_0^\infty \int_{S^{2}} \Big(\frac{v}{v'}\sigma_sI'
-\sigma'_sI\Big) \text{d}\Omega' \text{d}v',
$$
where $I=I(v,\Omega,t,x),\,\,I'=I(v',\Omega',t,x)$; $S=S(v,\Omega,t,x)\geq 0$ is the rate of energy emission due to spontaneous process; $\sigma_a=\sigma_a(v,\Omega,t,x,\rho)\geq 0$ denotes the absorption coefficient that may also depend on the mass density $\rho$; $\sigma_s$ is the ``differential scattering coefficient'' such that the probability of a photon being
scattered from $v'$ to $v$ contained in $\text{d}v$,  from $\Omega'$ to
$\Omega$ contained in $\text{d}\Omega$, and travelling a distance $\text{d}s$ is
given by $\sigma_s(v' \rightarrow v,\Omega'\cdot\Omega)\text{d}v \text{d}\Omega
\text{d}s$, and
\begin{equation*}
\begin{split}
\sigma_s\equiv  \sigma_s(v' \rightarrow v, \Omega'\cdot\Omega,\rho)=O(\rho),\ \sigma'_s\equiv  \sigma_s(v \rightarrow v', \Omega\cdot\Omega',\rho)=O(\rho).
\end{split}
\end{equation*}

When there is no radiation effect, the local existence of strong solutions with vacuum has been solved by many authors, we refer the reader to \cite{CK2}\cite{CK3}\cite{CK}. Huang-Li-Xin \cite{HX1} obtained the well-posedness of classical solutions with large oscillations and vacuum for Cauchy problem \cite{HX1} to the isentropic flow. 

In general, the study of radiation hydrodynamics equations is challenging due to the high complexity and mathematical difficulty of the equations themselves. For the Euler-Boltzmann equations of the inviscid compressible radiation fluid, Jiang-Zhong \cite{sjx} obtained the local existence of $C^1$ solutions for the Cauchy problem  away from vacuum. Jiang-Wang \cite{pjd} showed that some $ C^1$ solutions  will blow up in  finite time, regardless of the size of the initial disturbance. Li-Zhu \cite{sz1} established the local existence of Makino-Ukai-Kawashima type (see \cite{tms1}) regular solutions with vacuum, and also proved that the regular solutions will blow up if the initial mass density vanishes in some local domain.

For the Navier-Stokes-Boltzmann equations of the viscous compressible radiation fluid, under some physical assumptions, Chen-Wang \cite{zcy} studied the classical solutions of the Cauchy problem with the mass density away from vacuum. Ducomet and  Ne$\check{\text{c}}$asov$\acute{\text{a}}$ \cite{BD}\cite{BS}  obtained  the global weak solutions  and their large time behavior for the one-dimensional case. Li-Zhu \cite{sz2} considered the formation of singularities on classical solutions in multi-dimensional space ($d\geq 2$),  when the initial mass density is compactly supported and the initial specific radiation intensity satisfies some directional condtions. Some special phenomenon has been observed, for example, it is known in contrast with the second law of thermodynamics, the associated entropy equation may contain a negative production term for RHD system, which has already been observed in Buet and Despr$\acute{\text{e}}$s \cite{add1}. Moreover, from  Ducomet, Feireisl and Ne$\check{\text{c}}$asov$\acute{\text{a}}$ \cite{add2},  in which they  obtained the existence of global weak solution for some RHD model,  we know that the velocity field $u$ may develop uncontrolled time oscillations on the hypothetical vacuum zones.

The purpose of this paper is to provide a local theory of strong solutions (see Definition \ref{strong1}) to the RHD equations in the framework of Sobolev spaces.
Via the radiation transfer equation $(\ref{eq:1.1})_1$ and  the definitions of $F_r$ and $P_r$,  system (\ref{eq:1.1}) can be rewritten as
\begin{equation}
\label{eq:1.2}
\begin{cases}
\displaystyle
\frac{1}{c}I_t+\Omega\cdot\nabla I=A_r,\\[10pt]
\displaystyle
\rho_t+\text{div}(\rho u)=0,\\[10pt]
\displaystyle
(\rho u)_t+\text{div}(\rho u\otimes u)
  +\nabla p_m +Lu=-\frac{1}{c}\int_0^\infty \int_{S^2}A_r\Omega \text{d}\Omega \text{d}v,
\end{cases}
\end{equation}
where $L$ is the Lam$\acute{  \text{e} }$ operator defined by \eqref{lame}.
We consider the Cauchy problem of \eqref{eq:1.2} with the following  initial data
\begin{equation} \label{eq:2.2hh}
I|_{t=0}=I_0(v,\Omega,x),\quad (\rho, u)|_{t=0}=(\rho_0(x), u_0(x)),\ (v,\Omega,x)\in \mathbb{R}^+\times S^2\times \mathbb{R}^3.
\end{equation}

For (\ref{eq:1.2})-(\ref{eq:2.2hh}), inspired by the argument used in \cite{CK2}\cite{CK},  we introduce a similar initial layer compatibility condition  (\ref{kkkkk}),  which will be used to compensate the loss of positive lower bound of the initial mass density when vacuum appears. The key point is to get a priori estimates independent of the lower bound of the initial mass density by this compatibility condition. Then the existence of the local strong solutions can be obtained by the approximation process from non-vacuum to vacuum. We also prove that if the initial vacuum is not so irregular, then the compatibility condition of the initial data is necessary and sufficient for the existence of a unique strong solution. Finally, we give a blow-up criterion for the local strong solution: if $\overline{T}< +\infty$ is the maximal existence time of the local strong solution $(I,\rho,u)$, then
\begin{equation*}
\begin{split}
\lim \sup_{t\mapsto \overline{T}} \big(\|I(t)\|_{L^2(\mathbb{R}^+\times S^2; H^1\cap W^{1,q}(\mathbb{R}^3))}+\|\rho(t)-\overline{\rho}\|_{H^1\cap W^{1,q}}+|u(t)|_{\mathbb{D}^1}\big)=+\infty,
\end{split}
\end{equation*}
 where $3< q\leq 6$ and $\overline{\rho}\geq 0$ are both constants.  The similar results also hold for the barotropic flow with general pressure law $p_m=p_m(\rho)\in C^1(\mathbb{\overline{R}}^+)$.

Throughout this paper, we use the following simplified notations for standard homogenous and inhomogeneous Sobolev spaces:
\begin{equation*}\begin{split}
&\|f(v,\Omega,t,x,\rho(t,x))\|_{X_1(\mathbb{R}^+\times S^2;X_2(\mathbb{R}^3))}=\big\|\|f(v,\Omega,t,\cdot,\rho(t,\cdot))\|_{X_2(\mathbb{R}^3)}\big\|_{X_1(\mathbb{R}^+\times S^2)},\\
&\|f(v,\Omega,t,x,\rho(t,x))\|_{X_1(\mathbb{R}^+\times S^2;X_2([0,T]\times \mathbb{R}^3))}
=\big\|\|f(v,\Omega,\cdot,\cdot,\rho(\cdot,\cdot))\|_{X_2([0,T]\times\mathbb{R}^3)}\big\|_{X_1(\mathbb{R}^+\times S^2)},\\
& \|(f,g)\|_X=\|f\|_X+\|g\|_X,\quad |\|g\||_{X,T}=|\|g(t,x)\||_{X,T}=\sup_{t\in [0,T]}\|g(t,\cdot)\|_{X},\\
& \|f\|_{W^{m,p}}=\|f\|_{W^{m,p}(\mathbb{R}^3)},\quad \|f\|_s=\|f\|_{H^s(\mathbb{R}^3)},\quad |f|_p=\|f\|_{L^p(\mathbb{R}^3)},\\
&D^{k,r}=\{f\in L^1_{loc}(\mathbb{R}^3): |\nabla^kf|_{r}<+\infty\},\,\, D^k=D^{k,2},\,\, \mathbb{D}^1=\{f\in L^6(\mathbb{R}^3): |\nabla f|_{2}<\infty\},\\
  & |f|_{D^{k,r}}:=\|f\|_{D^{k,r}(\mathbb{R}^3)}=|\nabla^kf|_{r},\ |f|_{D^{k}}:=\|f\|_{D^{k}(\mathbb{R}^3)}=|\nabla^kf|_{2}, \,\,|f|_{\mathbb{D}^1}:=\|f\|_{\mathbb{D}^1(\mathbb{R}^3)}=|\nabla f|_{2},
\end{split}
\end{equation*}
where  $0< T<\infty$ and $1\leq p \leq\infty$ are both constants, $X$, $X_1$, and $X_2$ are some Sobolev spaces.
The following inequalities will be used in our paper:
$$
|u|_6\leq C|u|_{\mathbb{D}^1},\quad |u|_{\infty}\leq C\|u\|_{\mathbb{D}^1\cap D^2}, \quad |u|_{\infty}\leq C\|u\|_{W^{1,q}},
$$
where  $3< q\leq 6$  and  $\|u\|_{X_1\cap X_2}=\|u\|_{X_1}+\|u\|_{X_2}$.  A detailed study on homogeneous Sobolev spaces may be found in \cite{gandi}.

Now we make some assumptions on the physical coefficients $\sigma_a$ and  $\sigma_s$. First, let
$$ \sigma_s=\overline{\sigma}_s(v' \rightarrow v, \Omega'\cdot\Omega)\rho,\quad \sigma'_s=\overline{\sigma}'_s(v \rightarrow v', \Omega\cdot\Omega')\rho,$$
where the functions $\overline{\sigma}_s\geq 0$ and $\overline{\sigma}'_s\geq 0$ satisfy
\begin{equation}\label{zhen1}
\begin{cases}
\displaystyle
\int_0^\infty \int_{S^{2}}\Big(\int_{0}^{\infty}\int_{S^{2}} \Big|\frac{v}{v'}\Big|^2\overline{\sigma}^2_s\text{d}\Omega' \text{d}v'\Big)^{\lambda_1}\text{d}\Omega \text{d}v\leq C,\\[12pt]
\displaystyle
\int_0^\infty \int_{S^{2}}\Big(\int_{0}^{\infty}\int_{S^{2}} \overline{\sigma}'_s\text{d}\Omega' \text{d}v'\Big)^{\lambda_2}\text{d}\Omega \text{d}v+\int_{0}^{\infty}\int_{S^{2}} \overline{\sigma}'_s\text{d}\Omega' \text{d}v'\leq C,
\end{cases}
\end{equation}
where $\lambda_1=1$ or $\frac{1}{2}$, and $\lambda_2=1$ or $2$. Hereinafter we denote by $C$ a generic positive constant depending only on the fixed constants $\mu$, $\lambda$, $\gamma$, $q$, $T$  and the norms of $S$.
Second, let
$$\sigma_a=\sigma(v,\Omega,t,x,\rho)\rho,$$
then for $\rho^{i}(t)\,(i=1,2)$ satisfying
\begin{equation}\label{zhen3}
\begin{split}
\|\rho^i(t)-\overline{\rho}\|_{H^1\cap W^{1,q}(\mathbb{R}^3)}+\|\rho^i_{t}(t)\|_{L^2\cap L^q(\mathbb{R}^3)}<& +\infty,\
\end{split}
\end{equation}
we assume that
\begin{equation}\label{jia345}
\begin{cases}
\|\sigma(v,\Omega,t,x,\rho^i)\|_{L^2\cap L^\infty(\mathbb{R}^+\times S^2; L^\infty(\mathbb{R}^3))}\leq M( |\rho^i(t)|_\infty),\\[6pt]
\|\nabla \sigma(v,\Omega,t,x,\rho^i)\|_{L^2\cap L^\infty(\mathbb{R}^+\times S^2; L^r(\mathbb{R}^3))}\leq M( |\rho^i(t)|_\infty) (|\nabla \rho^i(t)|_r+1),\\[6pt]
\|\sigma_t(v,\Omega,t,x,\rho^i)\|_{L^2 (\mathbb{R}^+\times S^2; L^2(\mathbb{R}^3))}
\leq  M(|\rho^i(t)|_\infty)(|\rho^i_t(t)|_2+1),\\[6pt]
 |\sigma(v,\Omega,t,x,\rho_1)-\sigma(v,\Omega,t,x,\rho_2)|\leq  \overline{\sigma}(v,\Omega,t,x,\rho_1,\rho_2)|\rho^1(t)-\rho^2(t)|, \\[6pt]
 \|\overline{\sigma}(v,\Omega,t,x,\rho_1,\rho_2)\|_{L^\infty\cap L^2(\mathbb{R}^+\times S^2;L^\infty( \mathbb{R}^3))}\leq M(|(\rho^1,\rho^2)(t)|_\infty),
\end{cases}
\end{equation}
for $t\in [0,T]$ and $r \in [2,q]$, where $M=M(\cdot):\,[0,+\infty) \rightarrow[1,+\infty) $ denotes a strictly increasing continuous function, and $\sigma(v,\Omega,t,x,\rho^i)\in C([0,T]; L^2(\mathbb{R}^+\times S^2; L^{\infty}(\mathbb{R}^3)))$.
\begin{remark}\label{con}
These assumptions are similar to those in \cite{sjx} for the local existence of classical solutions to the Euler-Boltzmann equations with initial mass density away form vacuum and the assumptions in \cite{sz1} for the local existence of regular solutions  with vacuum.  The evaluation of these radiation quantities is a difficult problem of  quantum mechanics, and their general forms are usually not known. The expressions of $\sigma_a$ and $\sigma_s$ used for describing Compton Scattering process in \cite{gp} are given by
\begin{equation}\label{kk}
\begin{split}
&\sigma_a(v,t,x,\rho,\theta)=D_1\rho \theta^{-\frac{1}{2}}\exp\Big(-\frac{D_2}{\theta^{\frac{1}{2}}}\Big(\frac{v-v_0}{v_0}\Big)^2\Big),\ \sigma_s=\overline{\sigma}_s(v \rightarrow v', \Omega\cdot\Omega')\rho,
\end{split}
\end{equation}
where $v_0$ is the fixed frequency, $D_i(i=1,2)$ are positive constants and $\theta$ is the temperature.
\end{remark}

The rest of  this paper is organized as follows. In Section $2$, we give our main results including the local existence of strong solutions with vacuum,  the necessity and sufficiency of the initial layer compatibility condition and  the corresponding blow-up criterion for the local strong solution that we obtained.
In Section $3$, we prove the existence and uniqueness of  local strong solutions via establishing a priori estimates independent of the lower bound of $\rho_0$. In Section $4$, we show that the initial layer compatibility condition is necessary and sufficient for the existence of a unique local strong solution.  Finally in Section $5$,  we prove the  blow-up criterion that we claimed in Section $2$. 

\section{Main results}

We state our main results in this section.
First, we give the definition of strong solutions to Cauchy problem (\ref{eq:1.2})-(\ref{eq:2.2hh}).
\begin{definition}[\textrm{Strong solutions}]\label{strong1}
$(I,\rho,u)$ is a strong solution on $\mathbb{R}^+\times S^2\times [0,T]\times\mathbb{R}^3 $ to Cauchy problem (\ref{eq:1.2})-(\ref{eq:2.2hh}) if the following holds:
\begin{enumerate}
\item  $(I,\rho,u)$ solves (\ref{eq:1.2})-(\ref{eq:2.2hh}) in the following sense of distribution:
\begin{equation}\label{weak}\begin{split}
&\int_0^\infty \int_{S^{2}}\int_{0}^{T}\int_{\mathbb{R}^3}\Big(\frac{1}{c}I\xi_t+I\Omega\cdot\nabla\xi\Big)\text{d}x \text{d}t\text{d}\Omega \text{d}v+\int_0^\infty \int_{S^{2}}\int_{\mathbb{R}^3}\frac{1}{c}I_0\xi(0)\text{d}x \text{d}\Omega \text{d}v\\
=&-\int_0^\infty \int_{S^{2}}\int_{0}^{T}\int_{\mathbb{R}^3}A_r\xi\text{d}x \text{d}t\text{d}\Omega \text{d}v;\\[6pt]
&\int_{0}^{T}\int_{\mathbb{R}^3}\Big(\rho\zeta_t+\rho u \cdot\nabla\zeta\Big)\text{d}x \text{d}t+\int_{\mathbb{R}^3}\rho_0\zeta(0)\text{d}x=0;\\[6pt]
&\int_{0}^{T}\int_{\mathbb{R}^3}\Big(\rho u\varphi_t+\rho u\otimes u:\nabla\varphi+p_m\text{div}\varphi-\mu \nabla u:\nabla \varphi-(\lambda+\mu)\text{div}u\text{div}\varphi \Big)\text{d}x \text{d}t\\
\displaystyle
=&-\int_{\mathbb{R}^3}\rho_0u_0\varphi(0)\text{d}x+\frac{1}{c}\int_{0}^{T}\int_{\mathbb{R}^3}A_r\Omega\cdot\varphi\text{d}x \text{d}t;
\end{split}
\end{equation}
for any test functions $\xi=\xi(v,\Omega,t,x)\in C^\infty_c(\mathbb{R}^+\times S^2\times [0,T)\times\mathbb{R}^3 )$, $\zeta=\zeta(t,x)\in C^\infty_c( [0,T)\times\mathbb{R}^3 )$ and  $\varphi \in \mathbb{R}^3$ with $\varphi \in  C^\infty_c( [0,T)\times\mathbb{R}^3 )$.
\item  $(I,\rho,u)$ satisfies the following regularities:
\begin{equation*}\begin{split}
&I\in L^2(\mathbb{R}^+\times S^2;C([0,T];H^1\cap W^{1,q}(\mathbb{R}^3))),\ I_t\in L^2(\mathbb{R}^+\times S^2;C([0,T];L^{2}\cap L^{q}(\mathbb{R}^3))),\\
&\rho\geq 0,\ \rho-\overline{\rho}\in C([0,T];H^1\cap W^{1,q}),\quad \rho_t\in C([0,T];L^{2}\cap L^{q}),\\
&u\in C([0,T];\mathbb{D}^1\cap D^2)\cap  L^2([0,T];D^{2,q}),\ u_t\in  L^2([0,T];\mathbb{D}^1), \ \sqrt{\rho}u_t\in  L^\infty([0,T];L^2).
\end{split}
\end{equation*}
\end{enumerate}
\end{definition}

As has been observed in $3$-D compressible isentropic Navier-Stokes equations \cite{CK}, in order to make sure that the Cauchy problem with initial mass density containing vacuum is well-posed, the lack of a positive lower bound of the initial mass density $\rho_0$ should be compensated by some initial layer compatibility condition on the initial data $(\rho_0,u_0)$. Now  considering the $3$-D compressible isentropic radiation hydrodynamic  equations (\ref{eq:1.2}), if we denote  $p^0_m=A\rho^\gamma_0$, $S_0=S(v,\Omega,t=0,x)$, and
\begin{equation*}
\begin{split} A^0_r=&S_0-\sigma_a(v,\Omega,t=0,x,\rho_0)I_0
+\int_0^\infty \int_{S^{2}} \Big(\frac{v}{v'}\sigma_s(\rho_0)I'_0
-\sigma'_s(\rho_0)I_0\Big) \text{d}\Omega' \text{d}v',\\
 \sigma_s(\rho_0)=&  \sigma_s(v' \rightarrow v, \Omega'\cdot\Omega,\rho_0),\ \sigma'_s(\rho_0)= \sigma_s(v \rightarrow v', \Omega\cdot\Omega',\rho_0),
\end{split}
\end{equation*}
then the   main result  of this paper on the existence of the unique  local strong solutions can be shown as
\begin{theorem} [{\textbf{Local existence of strong solutions}}]\ \\[4pt]
\label{th1}
Let the  assumptions (\ref{zhen1})-(\ref{jia345}) hold, and assume that
$$\|S(v,\Omega,t,x)\|_{ L^2(\mathbb{R}^+\times S^2;C^1([0,\infty);H^1\cap W^{1,q}(\mathbb{R}^3)))\cap C^1([0,\infty); L^1(\mathbb{R}^+\times S^2;L^1\cap L^2(\mathbb{R}^3)))}< +\infty.$$
If the initial data $(I_0,\rho_0, u_0)$ satisfy the regularities
\begin{equation}\label{gogo}
\begin{split}
& I_0(v,\Omega,x)\in L^2(\mathbb{R}^+\times S^2; H^1\cap W^{1,q}(\mathbb{R}^3)),\\
&\rho_0\geq 0, \quad \rho_0-\overline{\rho}\in H^1\cap W^{1,q},\quad u_0\in \mathbb{D}^1\cap D^2,\\
&(I_0, \rho_0,  u_0)\rightarrow (0,\overline{\rho},0), \ \text{as}\ |x|\mapsto\infty,\  \forall \  (v,\Omega)\in\mathbb{R}^+\times S^2,
\end{split}
\end{equation}
and the initial layer compatibility condition
\begin{equation}\label{kkkkk}
\begin{split}
Lu_0+\nabla p^0_{m}+\frac{1}{c}\int_0^\infty \int_{S^2}A^0_r\Omega \text{d}\Omega \text{d}v=&\rho^{\frac{1}{2}}_0 g_1
\end{split}
\end{equation}
for some $g_1 \in L^2$, then there exists a time $T_*>0$ and a unique strong solution $(I,\rho,u)$ on $\mathbb{R}^+\times S^2\times [0,T_*]\times\mathbb{R}^3 $ to Cauchy problem (\ref{eq:1.2})-(\ref{eq:2.2hh}).
\end{theorem}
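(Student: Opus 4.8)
The plan is to construct the solution by a double approximation scheme: first linearize the system to obtain an iteration map, and then remove the vacuum by a further regularization of the initial density. I would start with the linearized problem: given a known velocity field $v$ (with the regularity dictated by Definition \ref{strong1}) and a known density profile, solve successively the linear transport equation for $I$, the linear continuity equation for $\rho$, and then the linear parabolic system
$$
\rho u_t+\rho v\cdot\nabla u+\nabla p_m(\rho)+Lu=-\frac{1}{c}\int_0^\infty\int_{S^2}A_r\Omega\,\dif\Omega\,\dif v
$$
for $u$. For the last step the density is strictly positive (after regularization $\rho_0+\delta$), so standard linear parabolic theory gives a unique $u$ with $u\in C([0,T];\mathbb{D}^1\cap D^2)\cap L^2([0,T];D^{2,q})$, $u_t\in L^2([0,T];\mathbb{D}^1)$, $\sqrt{\rho}u_t\in L^\infty([0,T];L^2)$. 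The radiation quantities $F_r,P_r$ and the source term $\int A_r\Omega$ are controlled in the right norms using assumptions (\ref{zhen1})--(\ref{jia345}) and the hypotheses on $S$; the key observations are that $\sigma_s,\sigma_s'$ carry a factor $\rho$ and that $\sigma_a=\sigma\rho$, so each radiative contribution to the momentum equation comes with a density weight that keeps it subordinate to the viscous term.

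The second step is the uniform a priori estimates. Working on the linearized-plus-regularized problem, I would establish bounds on
$$
\|I\|_{L^2(\mathbb{R}^+\times S^2;H^1\cap W^{1,q})},\quad \|\rho-\overline\rho\|_{H^1\cap W^{1,q}},\quad \|\rho_t\|_{L^2\cap L^q},\quad |u|_{\mathbb{D}^1\cap D^2},\quad |\sqrt{\rho}u_t|_2,\quad \|u\|_{L^2(D^{2,q})}
$$
on a time interval $T_*$ that is \emph{independent of $\delta$}. The transport and continuity estimates are obtained by differentiating, testing, and using Gronwall; the delicate part is the velocity. Testing the momentum equation by $u_t$ produces $\int\rho|u_t|^2$ plus $\frac{d}{dt}$ of the viscous energy, and one needs to bound $\int\rho v\cdot\nabla u\cdot u_t$ and the radiative source; then testing by $Lu$ (or using the elliptic estimate for the Lam\'e operator) upgrades to $D^2$ and $D^{2,q}$ bounds. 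The role of the compatibility condition (\ref{kkkkk}) is exactly to give a $\delta$-independent bound on $|\sqrt{\rho}u_t(0)|_2$: from the equation at $t=0$, $\sqrt{\rho_0}\,u_t(0)=-\frac{1}{\sqrt{\rho_0}}\big(Lu_0+\nabla p_m^0+\frac1c\int A_r^0\Omega\big)+(\text{convective})= -g_1+\cdots$, which is in $L^2$ precisely because of (\ref{kkkkk}). This closes the estimates in a ball of the iteration space.

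The third step is convergence of the iteration: one shows the iteration map is a contraction in a weaker norm (typically $L^\infty(L^2)$ for $I$ and $\rho$, $L^2(\mathbb{D}^1)$ or $L^\infty(L^2)$ for $u$), using the Lipschitz-type bound (\ref{jia345})$_4$--(\ref{jia345})$_5$ on $\sigma$ in the density variable to handle the difference of radiative terms. This yields, for each fixed $\delta>0$, a unique strong solution $(I^\delta,\rho^\delta,u^\delta)$ of the nonlinear problem with initial density $\rho_0+\delta$. The uniform-in-$\delta$ bounds from step two then permit passing to the limit $\delta\to0$: weak-$*$ limits in the high-norm spaces together with strong convergence (via Aubin--Lions, using the $t$-derivative bounds) in intermediate spaces recover a solution with $\rho\ge0$ satisfying (\ref{weak}) and the regularity of Definition \ref{strong1}. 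Uniqueness for the limiting problem with vacuum follows from the same difference estimate as in the contraction, now carried out directly on two solutions and closed by Gronwall, noting that the troublesome terms again carry a $\sqrt\rho$ or $\rho$ weight.

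I expect the main obstacle to be the velocity estimates in the degenerate ($\rho\to0$) regime: every bound on $\nabla u$, $\nabla^2u$ and $u_t$ must be obtained without ever dividing by a lower bound of $\rho$, which forces one to work with the weighted quantity $\sqrt\rho u_t$, to use the compatibility condition (\ref{kkkkk}) to initialize it, and to absorb the convective and radiative terms into the viscous dissipation via the elliptic regularity of $L$ and careful interpolation (e.g. $|u|_\infty\le C\|u\|_{W^{1,q}}$, $q>3$). Controlling the radiative source $\frac1c\int A_r\Omega\,\dif\Omega\,\dif v$ in $L^\infty(L^2)\cap L^2(W^{1,q})$ under only the integrability hypotheses (\ref{zhen1})--(\ref{jia345}) — in particular handling the scattering integrals with the change-of-variables weight $v/v'$ — is the second technically heavy point, but it is essentially bookkeeping once the $\rho$-weights are tracked.
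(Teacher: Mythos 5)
Your overall strategy matches the paper's: linearize the system, regularize the initial density by $\rho_0+\delta$, derive a priori estimates independent of $\delta$ by using the compatibility condition (\ref{kkkkk}) to initialize $|\sqrt{\rho}u_t|_2$ at $t=0$, and close the nonlinear problem by a contraction-type estimate in low-order norms using the Lipschitz hypotheses (\ref{jia345}) on $\sigma$. The only structural difference is the order of the two limits: you contract first at fixed $\delta>0$ (solving the nonlinear problem with strictly positive density) and then let $\delta\to0$, whereas the paper lets $\delta\to0$ already at the level of the linearized problem (Lemma \ref{lemk1}) and then runs the Picard iteration directly on the degenerate problem. Both orderings are viable.

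There is, however, one concrete point where your justification would fail as stated. In the uniqueness step for the vacuum problem (and, in the paper's ordering, already in the convergence of the iteration), the difference of the two momentum equations produces the term $\int_{\mathbb{R}^3}\overline{\rho}\,u_t\cdot\overline{u}\,\text{d}x$ with $\overline{\rho}=\rho_1-\rho_2$ and $\overline{u}=u_1-u_2$. This term does \emph{not} carry a helpful $\sqrt{\rho}$ or $\rho$ weight on $\overline{u}$ --- the weight is the density \emph{difference}, which cannot be paired with the controlled quantity $\sqrt{\rho_1}\,\overline{u}$ --- so your remark that ``the troublesome terms again carry a $\sqrt\rho$ or $\rho$ weight'' does not cover it. Since $\overline{u}$ itself is only controlled through $|\sqrt{\rho_1}\overline{u}|_2$ and $|\nabla\overline{u}|_2$, one must either (i) when $\overline{\rho}>0$, prove that the density stays uniformly positive outside a large ball (the paper's Lemma \ref{zhengding}, obtained from the particle-trajectory representation of $\rho$) and split the integral into $B_R$ and $B_R^C$, or (ii) when $\overline{\rho}=0$, supplement the $L^2$ estimate of the density difference with an $L^{3/2}$ estimate so that $\int|\overline{\rho}||u_t||\overline{u}|\leq|\overline{\rho}|_{3/2}|u_t|_6|\overline{u}|_6\leq C|\overline{\rho}|_{3/2}|\nabla u_t|_2|\nabla\overline{u}|_2$ can be absorbed into the dissipation. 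Without one of these devices the Gronwall argument for uniqueness (and for the convergence of the iteration) does not close; with it, your outline goes through.
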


\begin{remark}\label{zhen99}
For the case that the rate of energy emission $S$ depends on the mass density $\rho$, that is, $S=S(v,\Omega, t,x,\rho)$, similar results can be obtained via the same argument as the case $S=S(v,\Omega,t,x)$, if we assume, for $\rho^{i}(t) \ (i=1,2)$ satisfying (\ref{zhen3}), that,
\begin{equation}\label{jia666}
\begin{cases}
\|S(v,\Omega,t,x,\rho^i)\|_{ L^2(\mathbb{R}^+\times S^2;L^\infty( \mathbb{R}^3))\cap L^1(\mathbb{R}^+\times S^2;L^r( \mathbb{R}^3))} \leq M(|\rho^i(t)|_\infty),\\[6pt]
\|\nabla S(v,\Omega,t,x,\rho^i)\|_{ L^2(\mathbb{R}^+\times S^2;L^r( \mathbb{R}^3))}\leq M(|\rho^i(t)|_\infty)(|\nabla \rho(t)|_r+1),\\[6pt]
\|S_t(v,\Omega,t,x,\rho^i)\|_{ L^1(\mathbb{R}^+\times S^2;L^1\cap L^2( \mathbb{R}^3))} \leq M(|\rho^i(t)|_\infty)(|\rho_t(t)|_2+1),\\[6pt]
 |S(v,\Omega,t,x,\rho_1)-S(v,\Omega, t,x,\rho_2)|\leq  \overline{S}(v,\Omega,t,x,\rho_1,\rho_2)|\rho^1(t)-\rho^2(t)|, \\[6pt]
 \|\overline{S}(v,t,x,\rho_1,\rho_2)\|_{ L^1(\mathbb{R}^+\times S^2;L^3( \mathbb{R}^3))\cap  L^2 (\mathbb{R}^+\times S^2;L^2( \mathbb{R}^3))}\leq M(|(\rho^1,\rho^2)(t)|_\infty)
\end{cases}
\end{equation}
for $t\in [0,T]$ and $r \in [2,q]$.
\end{remark}

Our second result can be regarded as an explanation for  the compatibility  between  (\ref{gogo}) and  (\ref{kkkkk}) when the initial vacuum is not so irregular. To be more precise, we denote by $V$ the initial vacuum set, i.e, the interior of the zero-set of the initial density in $\mathbb{R}^3$, and define the Sobolev space $D^{1}_0(V)$ as
$$ D^{1}_0(V)=\{f\in L^6(V): |f|_{D^{1}_0}=|\nabla f|_{L^2}<\infty,\ f|_{\partial V}=0\}.$$
Then we have
\begin{theorem} [{\textbf{Necessity and sufficiency of the compatibility condition}}]\ \\[4pt]
\label{th2}
Let conditions in Theorem \ref{th1} hold. We assume that either the initial vacuum set $V$ is empty or the elliptic system
\begin{equation}\label{zhen101}
L\phi=-\mu\triangle \phi-(\lambda+\mu)\nabla \emph{div}\phi=0
\end{equation}
has only zero solution in $\mathbb{D}^1_0(V)\cap D^2(V)$. Then there exists a unique local strong solution $(I,\rho,u)$ satisfying
\begin{equation}\label{coco}
\begin{split}
&\|I(t)-I_0\|_{H^1\cap W^{1,q}(\mathbb{R}^3)}\rightarrow 0, \quad \text{as}\ t\rightarrow 0, \ \forall \ (v,\Omega) \in \mathbb{R}^+\times S^2,\\
&\|\rho(t)-\rho_0\|_{H^1\cap W^{1,q}(\mathbb{R}^3)}+|u(t)-u_0|_{\mathbb{D}^1\cap D^2(\mathbb{R}^3)}\rightarrow 0, \quad \text{as}\ t\rightarrow 0,
\end{split}
\end{equation}
if and only if the initial data satisfy the initial layer compatibility condition (\ref{kkkkk}).
\end{theorem}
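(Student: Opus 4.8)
The plan is to prove the two implications separately, using the local strong solution $(I,\rho,u)$ on $[0,T_*]$ furnished by Theorem \ref{th1} together with the regularity class in Definition \ref{strong1}. The sufficiency direction ($(\ref{kkkkk})\Rightarrow(\ref{coco})$) comes almost for free from the construction: the a priori estimates established in Section 3 to prove Theorem \ref{th1} are uniform on a small interval, so by the usual weak-lower-semicontinuity / Aubin--Lions compactness machinery one gets $u\in C([0,T_*];\mathbb{D}^1\cap D^2)$, $\rho-\overline\rho\in C([0,T_*];H^1\cap W^{1,q})$, and $I\in L^2(\mathbb{R}^+\times S^2;C([0,T_*];H^1\cap W^{1,q}))$. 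Evaluating these continuous-in-time functions at $t=0$ and using that the initial data are attained (which is part of what ``strong solution'' encodes) yields $(\ref{coco})$ directly. I would spell out only the point that the $D^2$-continuity of $u$ at $t=0$ genuinely requires the compatibility condition: it enters because $\sqrt\rho u_t\in L^\infty([0,T_*];L^2)$ forces, via the momentum equation rewritten as $Lu=-\nabla p_m-\frac1c\int A_r\Omega-\rho u_t-\rho u\cdot\nabla u$, that $Lu_0+\nabla p_m^0+\frac1c\int A_r^0\Omega = \rho_0^{1/2}(\sqrt{\rho_0}^{-1}\cdot(\text{bounded}))$; matching this against the elliptic estimate $|u|_{D^2}\le C|Lu|_2$ forces $g_1:=\rho_0^{-1/2}(Lu_0+\nabla p_m^0+\frac1c\int A_r^0\Omega)\in L^2$, which is $(\ref{kkkkk})$.

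For the necessity direction I would argue: suppose $(\ref{coco})$ holds; I want to produce $g_1\in L^2$. Differentiating the momentum equation and passing to the limit $t\to0$ in the sense of distributions, one gets $\rho_0 u_t(0) + Lu_0 + \nabla p_m^0 + \frac1c\int_0^\infty\int_{S^2}A_r^0\Omega\,d\Omega\,dv = 0$ in $\mathcal D'$, where $u_t(0)$ is the distributional time-derivative limit; the content of $(\ref{coco})$ (in particular $u(t)\to u_0$ in $D^2$ and the momentum equation holding in the strong-solution class) is exactly what guarantees that $h:=Lu_0+\nabla p_m^0+\frac1c\int A_r^0\Omega$ lies in $L^2$ and satisfies $h=-\rho_0 u_t(0)$ a.e. On the set where $\rho_0>0$ we may set $g_1:=\rho_0^{-1/2}h$ and must show $g_1\in L^2(\{\rho_0>0\})$, i.e. $\int_{\{\rho_0>0\}}\rho_0^{-1}|h|^2<\infty$. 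On the vacuum set $V=\{\rho_0=0\}^\circ$ one has $h=0$ a.e.: indeed on $V$ the limiting equation reads $Lu_0 = -\nabla p_m^0 - \frac1c\int A_r^0\Omega$ with $\rho_0\equiv0$, and both terms on the right vanish on $V$ because $p_m^0 = A\rho_0^\gamma$ with $\gamma>1$ has vanishing gradient on the interior of $\{\rho_0=0\}$, while $A_r^0$ is linear in $\sigma_a=\sigma\rho_0$ and $\sigma_s,\sigma_s'=O(\rho_0)$ so the radiative source also vanishes there (here is where the structural assumptions $\sigma_a=\sigma\rho_0$, $\sigma_s=\overline\sigma_s\rho_0$ are used crucially). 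Hence $Lu_0=0$ on $V$; since $u_0\in\mathbb{D}^1\cap D^2$ restricts to an element of $\mathbb{D}^1_0(V)\cap D^2(V)$ (after subtracting a harmonic-type corrector, or directly by the trace $u_0|_{\partial V}$ being well-defined and the attainment of $u_0$ forcing compatibility), the hypothesis that $(\ref{zhen101})$ has only the zero solution in $\mathbb{D}^1_0(V)\cap D^2(V)$ forces $u_0\equiv$ (the relevant lift), and in any case makes $h\equiv0$ on $V$. Setting $g_1:=\rho_0^{-1/2}h$ on $\{\rho_0>0\}$ and $g_1:=0$ on $V$ (and $0$ on the boundary null set) gives the desired $L^2$ function, because $\int_{\{\rho_0>0\}}\rho_0^{-1}|h|^2 = \int_{\{\rho_0>0\}}\rho_0 |u_t(0)|^2 \le \liminf_{t\to0}\int\rho|u_t|^2<\infty$ by the $\sqrt\rho u_t\in L^\infty L^2$ bound and Fatou.

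The main obstacle I anticipate is making rigorous the passage ``$t\to0$ in the differentiated momentum equation'' and identifying the limiting object $\rho_0 u_t(0)$ with $-h$ as an honest $L^2$ function rather than merely a distribution — one needs the convergences in $(\ref{coco})$ to be strong enough (in $D^2$ for $u$, in $H^1\cap W^{1,q}$ for $\rho$ and for $I$) to pass to the limit in every nonlinear term ($\rho u\cdot\nabla u$, $\rho u_t$, and the quadratic-in-$\rho$ radiative terms through the assumptions $(\ref{jia345})$), and then to invoke the estimate $\int\rho|u_t|^2\le C$ uniform near $t=0$, which itself is extracted from the energy method of Section 3. The second delicate point is the claim that $u_0$, viewed on the vacuum component $V$, actually lies in $\mathbb{D}^1_0(V)$ — i.e. has zero trace on $\partial V$ — which I would justify by noting that on $\{\rho_0>0\}$ we already know $u_0$ is determined (up to the $\mathbb{D}^1\cap D^2$ class) and that continuity of $u_0$ across $\partial V$ (from $u_0\in D^2\hookrightarrow C$) together with the vanishing of $h$ just inside $V$ makes $u_0|_V$ a genuine solution of the homogeneous Lamé system in the energy class, whereupon the uniqueness hypothesis $(\ref{zhen101})$ closes the argument.
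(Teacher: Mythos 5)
Your sufficiency direction ($\eqref{kkkkk}\Rightarrow\eqref{coco}$) contains the essential gap. You assert that $\eqref{coco}$ follows ``directly'' from time-continuity plus the fact that ``the initial data are attained,'' but a strong solution in the sense of Definition \ref{strong1} satisfies the system only in the sense of distributions, so the momentum equation yields only $\rho(0)u(0)=\rho_0u_0$; on the vacuum set $V$ this says nothing about $u(0)$, and $u(0)=u_0$ on $V$ is precisely what has to be proved (this is the content of Remark \ref{initial}). This is the one place where the hypothesis that \eqref{zhen101} has only the zero solution in $\mathbb{D}^1_0(V)\cap D^2(V)$ must be used, and your proposal never uses it there (you invoke it only in the necessity direction, where it is not needed). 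The paper's argument: run the necessity step on the solution itself to get that $(I_0,\rho_0,u(0))$ satisfies a compatibility relation $Lu(0)+\nabla p_m^0+\frac1c\int_0^\infty\int_{S^2}A_r^0\Omega\,\text{d}\Omega\,\text{d}v=\rho_0^{1/2}\tilde g$ with some $\tilde g\in L^2$; subtract \eqref{kkkkk} to obtain $L(u_0-u(0))=\rho_0^{1/2}(g_1-\tilde g)=0$ on $V$; note $u_0-u(0)=0$ where $\rho_0>0$, so $\overline u_0:=u_0-u(0)\in\mathbb{D}^1_0(V)\cap D^2(V)$ solves \eqref{zhen101}; the uniqueness hypothesis then gives $\overline u_0\equiv0$ on $V$. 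Without this step the theorem's main assertion is unproved. (Your side remark that the $D^2$-continuity of $u$ ``requires'' \eqref{kkkkk} and thereby re-derives it is circular and should be dropped from this direction.)

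The necessity direction is in the right spirit but has a soft spot: you identify a pointwise-in-time limit ``$u_t(0)$'' and apply Fatou to $\int_{\{\rho_0>0\}}\rho_0|u_t(0)|^2$, yet $\sqrt\rho u_t\in L^\infty([0,T_*];L^2)$ gives no convergence of $u_t(t)$ as $t\to0$, so this object is not well defined. The paper avoids it: write the momentum equation as $Lu(t)+\nabla p_m(t)+\frac1c\int_0^\infty\int_{S^2}A_r(t)\Omega\,\text{d}\Omega\,\text{d}v=\sqrt{\rho(t)}\,G(t)$ with $G=\sqrt\rho\,(-u_t-u\cdot\nabla u)\in L^\infty([0,T_*];L^2)$, extract $t_k\to0$ with $G(t_k)\rightharpoonup g$ weakly in $L^2$, and pass to the limit on the left using \eqref{coco}; then $g_1=g$ works, and on $V$ the right-hand side $\sqrt{\rho_0}\,g$ vanishes automatically, so no elliptic uniqueness is needed here. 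Your further claim that $u_0$ restricted to $V$ lies in $\mathbb{D}^1_0(V)$ is unsupported (nothing forces $u_0$ to vanish on $\partial V$); the function with zero trace is the difference $u_0-u(0)$, and it belongs to the sufficiency step, not to necessity.
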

\begin{remark}\label{initial}
From the regularities of the strong solution $(I,\rho,u)$ in the Definition \ref{strong1}, we know that
\begin{equation*}
\begin{split}
&I(v,\Omega,t,x)\in L^2(\mathbb{R}^+\times S^2;C([0,T];H^1\cap W^{1,q}(\mathbb{R}^3))), \\
&\rho(t,x)-\overline{\rho}\in C([0,T];H^1\cap W^{1,q}(\mathbb{R}^3)), \\
&u(t,x)\in C([0,T];\mathbb{D}^1\cap D^2(\mathbb{R}^3)).
\end{split}
\end{equation*}
But since the strong solution $(I,\rho,u)$ satisfies the Cauchy problem only  in the sense of distribution, we only have $I(v,\Omega,t=0,x)=I_0$, $\rho(t=0,x)=\rho_0$ and $\rho u(t=0,x)=\rho_0 u_0$. In the vacuum domain, the relation $u(t=0,x)=u_0$ maybe not hold.
Theorem \ref{th2} tells us  that if the initial vacuum set $V$ has a sufficiently simple geometry, for instance, it is a domain with Lipschitz boundary, we then have $u(t=0,x)=u_0$.
\end{remark}
Finally, we give a blow-up criterion for  strong solutions obtained in Theorem \ref{th1}.
\begin{theorem} [{\textbf{Blow-up criterion for the local strong solution}}]\ \\[4pt]
\label{th3}
Let conditions in Theorem \ref{th1} hold.
If $\overline{T}< +\infty $ is the maximal existence time of the local strong solution $(I,\rho, u)$ obtained in Theorem \ref{th1}, then we have
\begin{equation}\label{blowcr}
\begin{split}
\lim \sup_{t\mapsto \overline{T}} \big(\|I(t)\|_{L^2(\mathbb{R}^+\times S^2; H^1\cap W^{1,q}(\mathbb{R}^3))}+\|\rho(t)-\overline{\rho}\|_{H^1\cap W^{1,q}}+|u(t)|_{\mathbb{D}^1}\big)=+\infty.
\end{split}
\end{equation}

\end{theorem}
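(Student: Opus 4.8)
The plan is to argue by contradiction. Suppose $\overline{T}<+\infty$ is the maximal existence time but the quantity in \eqref{blowcr} stays bounded, i.e.
$$
\limsup_{t\to\overline{T}}\Big(\|I(t)\|_{L^2(\mathbb{R}^+\times S^2;H^1\cap W^{1,q})}+\|\rho(t)-\overline{\rho}\|_{H^1\cap W^{1,q}}+|u(t)|_{\mathbb{D}^1}\Big)=:M_0<+\infty .
$$
The goal is then to show that under this bound all the norms appearing in Definition \ref{strong1} remain finite up to $t=\overline{T}$, with limits at $t=\overline{T}$ lying in the same spaces as the initial data of Theorem \ref{th1}; one may then restart the local existence theorem from time $\overline{T}$ and extend the solution past $\overline{T}$, contradicting maximality. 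So the substance of the proof is a set of a priori estimates on $[0,\overline{T})$ depending only on $M_0$, $\overline{T}$, the fixed constants, and the norms of $S$ and the radiation coefficients.

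The steps, in order, are as follows. First I would recover control of $\sqrt{\rho}\,u$ and of the density: from the continuity equation and the bound on $|u|_{\mathbb{D}^1}$ (hence on $|u|_6$), a Gronwall argument in the transport equation for $\rho-\overline{\rho}$ already gives $\|\rho(t)-\overline{\rho}\|_{H^1\cap W^{1,q}}$ bounded — but this is assumed — so the real first step is the basic energy estimate: multiply the momentum equation by $u$, integrate, and use the structure of the radiation source term $-\tfrac1c\int\!\!\int A_r\Omega\,\dif\Omega\dif v$ together with \eqref{zhen1}, \eqref{jia345} and the assumed regularity of $S$ to bound $\sup_t|\sqrt{\rho}u(t)|_2^2+\int_0^{\overline T}|\nabla u|_2^2\,\dif t$. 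Second, the key velocity estimate: multiply the momentum equation by $u_t$ (equivalently, differentiate in $t$ and test with $u_t$) to get $\sup_t|\sqrt{\rho}u_t(t)|_2^2+\int_0^{\overline T}|\nabla u_t|_2^2\,\dif t\le C(M_0)$; here the initial-layer compatibility condition \eqref{kkkkk} is used to bound $|\sqrt{\rho}u_t(0)|_2$, exactly as in the local existence proof. Third, feed this into the elliptic (Lam\'e) estimate $|u|_{D^2}\le C(|\rho u_t|_2+|\rho u\cdot\nabla u|_2+|\nabla p_m|_2+|\text{radiation source}|_2)$ and its $D^{2,q}$ analogue to obtain $\sup_t|u(t)|_{D^2}$ and $\int_0^{\overline T}|u|_{D^{2,q}}^2\,\dif t$ bounded. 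Fourth, with $u\in L^1(0,\overline T;W^{1,\infty})$ now in hand (from $\|u\|_{D^{2,q}}$, $q>3$), run the transport estimates for $\nabla\rho$ in $L^2\cap L^q$ and for $\rho_t$, and run the transfer-equation estimates for $I$ in $L^2(\mathbb{R}^+\times S^2;H^1\cap W^{1,q})$ and for $I_t$ — all along characteristics, using \eqref{zhen1}, \eqref{jia345}, the $S$-assumptions, and the already-established bounds on $\rho$ and $u$. This closes the full a priori bound.

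Finally, having all Definition \ref{strong1} norms bounded uniformly on $[0,\overline T)$, I would show the solution extends continuously to $t=\overline T$: weak-* limits give $(\rho-\overline\rho)(\overline T)\in H^1\cap W^{1,q}$, $u(\overline T)\in\mathbb{D}^1\cap D^2$, $I(\overline T)\in L^2(\mathbb{R}^+\times S^2;H^1\cap W^{1,q})$, and one checks the compatibility condition \eqref{kkkkk} is inherited at time $\overline T$ because $\sqrt{\rho}u_t\in L^\infty(0,\overline T;L^2)$ furnishes the requisite $g_1$. Applying Theorem \ref{th1} with initial time $\overline T$ yields a strong solution on $[\overline T,\overline T+T_\ast]$, contradicting maximality of $\overline T$, and proving \eqref{blowcr}.

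The main obstacle I expect is the coupling through the radiation terms: the source $-\tfrac1c\int\!\!\int A_r\Omega\,\dif\Omega\dif v$ in the momentum equation and its time and spatial derivatives must be estimated in $L^2$ (and $L^q$) purely in terms of $\|I\|_{L^2(\mathbb{R}^+\times S^2;H^1\cap W^{1,q})}$, $\|I_t\|$, $\|\rho\|$, and the coefficient assumptions — in particular handling the scattering integrals $\int(\tfrac{v}{v'}\sigma_s I'-\sigma'_s I)\,\dif\Omega'\dif v'$ requires the precise integrability hypotheses \eqref{zhen1} and a careful Minkowski/H\"older argument in the $(v,\Omega)$ variables, together with the fact that $\sigma_a,\sigma_s=O(\rho)$ so these terms carry a factor of $\rho$ that must be tracked. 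The second delicate point is that the bound on $|u|_{\mathbb{D}^1}$ alone (rather than on $\|u\|_{D^2}$) is assumed finite, so every estimate must be arranged so that $|u|_{D^2}$ and higher norms are \emph{derived} from the $u_t$-estimate and the elliptic regularity, never assumed; getting the a priori inequalities into a Gronwall-closable form without circularity is the technical heart of the argument.
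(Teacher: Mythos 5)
Your proposal is correct and follows essentially the same route as the paper: the substance in both cases is the chain of a priori estimates (the $u_t$-energy estimate seeded by the compatibility condition, the Lam\'e elliptic regularity giving $|u|_{D^2}$ and $|u|_{D^{2,q}}$, and the transport/transfer-equation bounds for $\rho_t$, $I$ and $I_t$, with the radiation source terms handled via \eqref{zhen1}--\eqref{jia345} and Minkowski's inequality) showing that the full strong-solution norm is controlled by an increasing function of the quantity in \eqref{blowcr}. The paper phrases this contrapositively --- it asserts that the full norm $\Theta(t)$ must blow up as $t\to\overline{T}$ and concludes that $\Phi(t)$ does too --- whereas you make the continuation-past-$\overline{T}$ step explicit; this is only a presentational difference.
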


\begin{remark}[\textbf{General barotropic flow}]
\label{co1}
Similar results also hold for general barotropic flow.
Let (\ref{zhen1}) and (\ref{jia345}) hold, $p_m=p_m(\rho)\in C^1(\mathbb{\overline{R}}^+)$ and
$$\|S(v,\Omega,t,x)\|_{ L^2(\mathbb{R}^+\times S^2;C^1([0,\infty);H^1\cap W^{1,q}(\mathbb{R}^3)))\cap C^1([0,\infty); L^1(\mathbb{R}^+\times S^2;L^1\cap L^2(\mathbb{R}^3)))}< +\infty.$$
Assume that the initial data $(I_0, \rho_0,u_0)$ satisfy the regularity conditions
\begin{equation}\label{gogggo}
\begin{split}
& I_0(v,\Omega,x)\in L^2(\mathbb{R}^+\times S^2; H^1\cap W^{1,q}(\mathbb{R}^3)), \\
&\rho_0\geq 0,\quad \rho_0-\overline{\rho}\in H^1\cap W^{1,q},\quad u_0\in \mathbb{D}^1\cap D^2,\\
&(I_0, \rho_0,  u_0)\rightarrow (0,\overline{\rho},0), \ \text{as}\ |x|\mapsto\infty,\  \forall (v,\Omega)\in\mathbb{R}^+\times S^2,
\end{split}
\end{equation}
and the compatibility condition
\begin{equation}\label{kkcv}
\begin{split}
Lu_0+\nabla p^0_{m}+\frac{1}{c}\int_0^\infty \int_{S^2}A^0_r\Omega \text{d}\Omega \text{d}v=&\rho^{\frac{1}{2}}_0 g_2
\end{split}
\end{equation}
for some $g_2 \in L^2$, where  $p^0_m=p_m(\rho_0)$,  $ A^0_r$ is defined as before.
Then the conclusions obtained in Theorems \ref{th1}-\ref{th3} also hold for (\ref{eq:1.2})-(\ref{eq:2.2hh}).
\end{remark}
\section{The existence and uniqueness of local strong solutions}

We prove Theorem \ref{th1} in this section, i.e., the existence and uniqueness of local strong solutions. For the rate of energy emission $S$,  we always assume that
$$\|S(v,\Omega, t,x)\|_{ L^2(\mathbb{R}^+\times S^2;C^1([0,\infty);H^1\cap W^{1,q}(\mathbb{R}^3)))\cap C^1([0,\infty); L^1(\mathbb{R}^+\times S^2;L^1\cap L^2(\mathbb{R}^3)))}< +\infty.$$
In order to prove
 the local existence of strong solutions to the nonlinear problem, we need to consider the  linearized system
\begin{equation}
\label{eq:1.weeer}
\begin{cases}
\displaystyle
\rho_t+\text{div}(\rho w)=0,\\[8pt]
\displaystyle
\frac{1}{c}I_t+\Omega\cdot\nabla I=\overline{A}_r,\\[8pt]
\displaystyle
(\rho u)_t+\text{div}(\rho w\otimes u)
  +\nabla p_m +Lu=-\frac{1}{c}\int_0^\infty \int_{S^2}A_r\Omega \text{d}\Omega \text{d}v,
\end{cases}
\end{equation}
with the initial data (\ref{eq:2.2hh}), where
$w=w(t,x)\in \mathbb{R}^3$ is a known vector, the terms $\overline{A}_{r}$ and $A_{r}$ are defined by
\begin{equation*}
\begin{split}
\overline{A}_r=&S-\sigma_a(\rho)I+\int_0^\infty \int_{S^2} \Big(\frac{v}{v'}\sigma_s(\rho)\psi-\sigma'_s(\rho)I \Big)\text{d}\Omega' \text{d}v',\\
A_r=&S-\sigma_a(\rho)I+\int_0^\infty \int_{S^2} \Big(\frac{v}{v'}\sigma_s(\rho)I'-\sigma'_s(\rho)I \Big)\text{d}\Omega' \text{d}v',
\end{split}
\end{equation*}
where $\psi=\psi(v',\Omega',t,x)$ is a known function. We assume that
\begin{equation}\label{tyuu}
\begin{split}
& I_0(v,\Omega,x)\in L^2(\mathbb{R}^+\times S^2; H^1\cap W^{1,q}),\ \rho_0\geq 0,\ \rho_0-\overline{\rho}\in H^1\cap W^{1,q}, \ u_0\in \mathbb{D}^1\cap D^2,\\
& w\in C([0,T];\mathbb{D}^1\cap D^2)\cap  L^2([0,T];D^{2,q}), \ w_t\in  L^2([0,T];\mathbb{D}^1),\\
&\psi\in L^2(\mathbb{R}^+\times S^2;C([0,T];H^1\cap W^{1,q})),\ \psi_t\in L^2(\mathbb{R}^+\times S^2;C([0,T];L^{2}\cap L^{q})),\\
& (w,\psi)|_{t=0}=(u_0,I_0).
\end{split}
\end{equation}

\subsection{A priori estimates to the linearized problem away from vacuum}\ \\

We immediately  have the global existence of a unique strong solution $(I,\rho,u)$ to (\ref{eq:1.weeer}) with (\ref{eq:2.2hh}) by the standard methods at least for the case that the initial mass density is away from vacuum.

 \begin{lemma}\label{lem1}
 Assume in addition to (\ref{tyuu}) that $\rho_0\geq \delta$ for some constant $\delta>0$ and the compatibility condition (\ref{kkkkk}) holds.
Then there exists a unique strong solution $(I,\rho,u)$ to Cauchy problem (\ref{eq:1.weeer}) with (\ref{eq:2.2hh}) such that
\begin{equation*}\begin{split}
&I\in L^2(\mathbb{R}^+\times S^2;C([0,T];H^1\cap W^{1,q}(\mathbb{R}^3)))\cap C([0,T]; L^2(\mathbb{R}^+\times S^2; L^{2}\cap L^{q}(\mathbb{R}^3))),\\
&I_t\in L^2(\mathbb{R}^+\times S^2;C([0,T];L^{2}\cap L^{q}(\mathbb{R}^3))),\\
&\rho-\overline{\rho}\in C([0,T];H^1\cap W^{1,q}),\ \rho_t\in C([0,T];L^{2}\cap L^{q}),\ \rho \geq \underline{\delta},\\
&u\in C([0,T];H^2)\cap  L^2([0,T];D^{2,q}),\ u_t\in  C([0,T];L^2)\cap L^2([0,T];H^1),\ u_{tt}\in  L^2([0,T];H^{-1}),
\end{split}
\end{equation*}
for some constants $3<q \leq 6$ and $\underline{\delta}>0$.
\end{lemma}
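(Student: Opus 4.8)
The plan is to establish Lemma \ref{lem1} by a decoupling-and-iteration scheme, exploiting the fact that $w$ and $\psi$ are \emph{given}, so that the linearized system $(\ref{eq:1.weeer})$ is genuinely a cascade: the continuity equation determines $\rho$ from $w$, then the transfer equation determines $I$ from $\rho$ and $\psi$, and finally the momentum equation is a linear parabolic system for $u$ with known coefficients and source. First I would solve $(\ref{eq:1.weeer})_1$: since $w\in C([0,T];\mathbb{D}^1\cap D^2)\cap L^2([0,T];D^{2,q})$ has $w,\nabla w\in L^1([0,T];L^\infty)$ (by the Sobolev embeddings quoted in the introduction), the linear transport equation with smooth-enough velocity has a unique solution via the method of characteristics; the flow map $X(t,\cdot)$ is a bi-Lipschitz diffeomorphism, $\rho(t,x)=\rho_0(X^{-1})\exp(-\int_0^t\mathrm{div}\,w\,\mathrm ds)$, which immediately gives the lower bound $\rho\ge\underline\delta:=\delta\exp(-\int_0^T|\mathrm{div}\,w|_\infty\,\mathrm ds)>0$, propagation of the upper bound, and, by differentiating the transport equation and doing standard $H^1\cap W^{1,q}$ and then $L^2\cap L^q$ (for $\rho_t=-\mathrm{div}(\rho w)$) energy estimates, the stated regularity $\rho-\overline\rho\in C([0,T];H^1\cap W^{1,q})$, $\rho_t\in C([0,T];L^2\cap L^q)$.

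Next I would solve the transfer equation $(\ref{eq:1.weeer})_2$ for $I$. With $\rho$ now fixed, $\overline A_r$ is affine in $I$: $\overline A_r=S+\int\frac{v}{v'}\sigma_s(\rho)\psi\,\mathrm d\Omega'\mathrm dv'-\big(\sigma_a(\rho)+\int\sigma_s'(\rho)\,\mathrm d\Omega'\mathrm dv'\big)I$. For each fixed $(v,\Omega)$ this is a scalar linear transport equation in $(t,x)$ along the \emph{constant} direction $c\Omega$, with a zeroth-order coefficient $-(\sigma_a(\rho)+\int\sigma_s'(\rho))$ and an inhomogeneity built from $S$ and $\psi$; it is solved explicitly by Duhamel along the straight characteristics $x-c\,t\,\Omega$. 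The spatial regularity $H^1\cap W^{1,q}$ is preserved because the coefficients lie in that space (use assumptions $(\ref{zhen1})$, $(\ref{jia345})$ together with $\rho-\overline\rho\in H^1\cap W^{1,q}$ to bound $\nabla(\sigma_a(\rho))$, $\nabla\int\sigma_s'(\rho)$, etc.), and then one integrates over $(v,\Omega)\in\mathbb{R}^+\times S^2$ using the integrability exponents $\lambda_1,\lambda_2$ in $(\ref{zhen1})$ and the $L^2\cap L^\infty(\mathbb{R}^+\times S^2;\cdot)$ bounds in $(\ref{jia345})$ to land in $L^2(\mathbb{R}^+\times S^2;C([0,T];H^1\cap W^{1,q}))$; differentiating the equation in $t$ and repeating gives $I_t\in L^2(\mathbb{R}^+\times S^2;C([0,T];L^2\cap L^q))$, and also the continuity-in-time statement $I\in C([0,T];L^2(\mathbb{R}^+\times S^2;L^2\cap L^q))$.

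Finally I would solve the momentum equation $(\ref{eq:1.weeer})_3$, which is now a linear second-order parabolic system $\rho u_t+\rho w\cdot\nabla u+Lu=F$ with $F:=-\nabla p_m-\frac1c\int\int A_r\Omega\,\mathrm d\Omega\mathrm dv$ a known $L^2([0,T];H^1)$-type source (here $p_m=A\rho^\gamma$ inherits $H^1\cap W^{1,q}$ regularity from $\rho$, and the radiation source is controlled by the $I$-estimates just obtained). Since $\rho$ is bounded above and below by positive constants, the operator $\partial_t+\frac1\rho w\cdot\nabla+\frac1\rho L$ is uniformly parabolic with bounded measurable-in-time, smooth-in-space coefficients; I would construct the solution by a Galerkin scheme (or by the standard semigroup/continuity method), using the basic energy estimate (multiply by $u$), the first-order estimate (multiply by $u_t$, which is where the compatibility condition $(\ref{kkkkk})$ enters to control $\|\sqrt\rho\,u_t(0)\|_{L^2}$, hence $\|u_t(0)\|_{L^2}$ since $\rho\ge\delta$), the elliptic regularity for $L$ to upgrade $u$ to $C([0,T];H^2)$ and $L^2([0,T];D^{2,q})$, and the equation itself to read off $u_{tt}\in L^2([0,T];H^{-1})$. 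Uniqueness is immediate from the linearity at each stage (energy estimates on differences). The main obstacle I anticipate is \emph{not} any single PDE step but the bookkeeping in the second stage: keeping the $(v,\Omega)$-integrated norms of $I$ finite requires carefully matching the decay/integrability built into $(\ref{zhen1})$ (the exponents $\lambda_1\in\{1,\tfrac12\}$, $\lambda_2\in\{1,2\}$) against the Hölder splitting of the scattering integral $\int\frac{v}{v'}\sigma_s(\rho)\psi\,\mathrm d\Omega'\mathrm dv'$ in $H^1\cap W^{1,q}$, and this estimate must be uniform enough to survive later when $\delta\downarrow0$; I would isolate it as a standalone lemma on the transfer operator before assembling the three stages.
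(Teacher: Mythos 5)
Your proposal follows essentially the same three-stage cascade as the paper's proof: solve the continuity equation by characteristics along the flow of $w$ (yielding the explicit exponential formula and hence the lower bound $\underline{\delta}$), then solve the transfer equation as a linear transport equation in $I$ with nonnegative zeroth-order coefficient $\sigma_a+\int\sigma_s'(\rho)$ and known source built from $S$ and $\psi$, and finally treat the momentum equation as a uniformly parabolic linear system (dividing by $\rho\ge\underline{\delta}$) handled by the standard theory of Cho--Kim. The paper states these steps more tersely, citing \cite{CK}, \cite{CK2}, \cite{CK3} for the details you spell out, but the decomposition, the role of the compatibility condition, and the order of the argument are identical.
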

\begin{proof}
First, the existence and regularity of the unique solution $\rho$ to  $(\ref{eq:1.weeer})_1$ can be obtained essentially according to the same argument in \cite{CK} for Navier-Stokes equations, and $\rho$ can be expressed by
\begin{equation}
\label{eq:bb1}
\rho(t,x)=\rho_0(U(0;t,x))\exp\Big(-\int_{0}^{t}\textrm{div}\, w(s,U(s,t,x))\text{d}s\Big),
\end{equation}
where  $U\in C([0,T]\times[0,T]\times \mathbb{R}^3)$ is the solution to the initial value problem
\begin{equation}
\label{eq:bb1}
\begin{cases}
\displaystyle\frac{\text{d}}{\text{d}s}U(s;t,x)=w(s,U(s;t,x)),\quad 0\leq s\leq T,\\
U(t;t,x)=x, \quad \ \qquad \quad 0\leq t\leq T,\ x\in \mathbb{R}^3,
\end{cases}
\end{equation}
so we can easily get the positive lower bound of $\rho$.

Second,   $(\ref{eq:1.weeer})_2$ can be rewritten into
\begin{equation}\label{rst}
\frac{1}{c}I_t+\Omega\cdot\nabla I+\Big(\sigma_a+\int_0^\infty \int_{S^2} \sigma'_s(\rho)\text{d}\Omega' \text{d}v'\Big)I=F(v,\Omega,t,x),
\end{equation}
where
$$F=S+\int_0^\infty \int_{S^2} \frac{v}{v'}\sigma_s(\rho)\psi\text{d}\Omega' \text{d}v'\in L^2(\mathbb{R}^+\times S^2;C([0,T];H^1\cap W^{1,q}(\mathbb{R}^3))),$$
then we easily get the existence and regularity of a unique solution $I$ to (\ref{rst}) such that
 $$
 I\in L^2(\mathbb{R}^+\times S^2;C([0,T];H^1\cap W^{1,q}(\mathbb{R}^3))),\ I_t\in L^2(\mathbb{R}^+\times S^2;C([0,T];L^{2}\cap L^{q}(\mathbb{R}^3))),
 $$
and according to the classical imbedding theory for Sobolev spaces, it is easy to show that
$$ I\in C([0,T]; L^2(\mathbb{R}^+\times S^2; L^{2}\cap L^{q}(\mathbb{R}^3))).$$

Finally,  the momentum equations   $(\ref{eq:1.weeer})_3$ can be written into
\begin{equation}\label{rst1}
\displaystyle
u_t+ w\cdot\nabla u
 +\rho^{-1}Lu= -\rho^{-1}\nabla p_m -\frac{1}{c}\rho^{-1}\int_0^\infty \int_{S^2}A_r\Omega \text{d}\Omega \text{d}v,
\end{equation}
then the existence and regularity of the unique solution $u$ to the corresponding linear parabolic problem can be obtained by standard methods as in \cite{CK2}\cite{CK3}.
\end{proof}

In order to pass to the limit as  $\delta\rightarrow 0$, we need to establish a priori estimates independent of $\delta$ for the  solution  $(I,\rho,u)$ to Cauchy problem (\ref{eq:1.weeer}) with (\ref{eq:2.2hh}) obtained in Lemma \ref{lem1}.

We fix a positive constant $c_0$ sufficiently large such that
\begin{equation*}\begin{split}
&2+\|\rho_0-\overline{\rho}\|_{H^1\cap W^{1,q}}+|u_0|_{\mathbb{D}^1\cap D^2}+\|I_0\|_{L^2(\mathbb{R}^+\times S^2; H^1\cap W^{1,q}(\mathbb{R}^3))}+|g_1|_2\\
&+\|S(v,\Omega, t,x)\|_{ L^2(\mathbb{R}^+\times S^2;C^1([0,\infty);H^1\cap W^{1,q}(\mathbb{R}^3)))\cap C^1([0,\infty); L^1(\mathbb{R}^+\times S^2;L^1\cap L^2(\mathbb{R}^3)))}\leq c_0,
\end{split}
\end{equation*}
and 
\begin{equation*}\begin{split}
\displaystyle
\|\psi\|_{L^2(\mathbb{R}^+\times S^2;C([0,T^*];H^1\cap W^{1,q}(\mathbb{R}^3)))}\leq& c_1,\\
\displaystyle
 \|\psi_t\|_{L^2(\mathbb{R}^+\times S^2;C([0,T^*];L^{2}\cap L^{q}(\mathbb{R}^3)))}\leq& c_2,\\
\displaystyle
\sup_{0\leq t \leq T^*}|w(t)|^2_{\mathbb{D}^1}+\int_{0}^{T^*}|w(s)|^2_{D^{2}}\text{d}s\leq& c^2_3,\\
\displaystyle
\sup_{0\leq t \leq T^*}|w(t)|^2_{D^2}+\int_{0}^{T^*}\Big(|w(s)|^2_{D^{2,q}}+|w_t(s)|^2_{\mathbb{D}^1}\Big)\text{d}s\leq& c^2_4,
\end{split}
\end{equation*}
for some time $T^*\in (0,T)$ and constants $c_i$ ($i=1,2,3,4$) such that 
$$1< c_0\leq c_1 \leq c_2 \leq c_3 \leq c_4. $$ The constants $c_i$ ($i=1,2,3,4$) and $T^*$ will be determined later and depend only on $c_0$ and the fixed constants $\overline{\rho}$, q, A, $\mu$, $\lambda$, $\gamma$, $c$ and $T$.
  As defined in assumption (\ref{jia345}), $M=M(\cdot): [0,+\infty) \rightarrow[1,+\infty)$ still denotes a strictly increasing continuous function depending only on fixed constants $\overline{\rho}$, q, A, $\mu$, $\lambda$, $\gamma$, $c$ and $T$.\\

We first give the a priori estimates  for density $\rho$. Hereinafter, we use  $C\geq 1$ to denote  a generic positive constant depending only on fixed constants $\overline{\rho}$, q, A, $\mu$, $\lambda$, $\gamma$, $c$ and $T$. 

\begin{lemma}[\textbf{Estimates for the mass density $\rho$}]\label{lem:2}
For the strong solution $(I,\rho,u)$ to the Cauchy problem (\ref{eq:1.weeer}) with (\ref{eq:2.2hh}), there exists a time $T_1>0$ such that
\begin{equation*}\begin{split}
\|\rho(t)-\overline{\rho}\|_{H^1\cap W^{1,q}}+\|p_m(t)-\overline{p}\|_{H^1\cap W^{1,q}}\leq& M(c_0),\\
 |\rho_t(t)|_{2}+ |(p_m)_t(t)|_{2}\leq M(c_0)c_3,\quad  |\rho_t(t)|_{q}+ |(p_m)_t(t)|_{q}\leq& M(c_0)c_4,
\end{split}
\end{equation*}
for  $0\leq t \leq T_1=\min(T^*,(1+c^2_4)^{-1})$ and $\overline{p}=A\overline{\rho}^\gamma$.
 \end{lemma}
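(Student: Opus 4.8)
The plan is to work directly from the transport equation $(\ref{eq:1.weeer})_1$, $\rho_t+\mathrm{div}(\rho w)=0$, rewriting it for the perturbation $\sigma:=\rho-\overline{\rho}$ as $\sigma_t + w\cdot\nabla\sigma + (\sigma+\overline{\rho})\,\mathrm{div}\,w = 0$. First I would apply $\nabla$ and then $\nabla^2$ (or more precisely estimate in $W^{1,q}$ and in $H^1$ separately), multiply by $|\nabla\sigma|^{q-2}\nabla\sigma$ resp.\ by $\nabla\sigma$, and integrate over $\mathbb{R}^3$. The commutator and product terms are controlled by the algebra/Moser-type inequalities in $H^1\cap W^{1,q}$ together with the Sobolev embeddings $\mathbb{D}^1\cap D^2\hookrightarrow W^{1,\infty}$ and $D^{2,q}\hookrightarrow W^{1,\infty}$ recorded in the introduction; this yields a differential inequality of the schematic form
\begin{equation*}
\frac{\dif}{\dif t}\big(\|\sigma(t)\|_{H^1\cap W^{1,q}}\big)\le C\big(|w(t)|_{D^2}+|w(t)|_{D^{2,q}}\big)\big(\|\sigma(t)\|_{H^1\cap W^{1,q}}+1\big).
\end{equation*}
Using the bounds on $w$ fixed before the lemma, $\int_0^{T^*}(|w|_{D^2}+|w|_{D^{2,q}})\,\dif s\le C(c_4)\big(T^*+ (T^*)^{1/2}c_4\big)$, Grönwall's inequality gives $\|\sigma(t)\|_{H^1\cap W^{1,q}}\le (c_0+1)\exp(C(c_4)(T^*+(T^*)^{1/2}c_4))$, which is $\le M(c_0)$ once we choose $T^*\le T_1=\min(T^*,(1+c_4^2)^{-1})$, since then the exponent is bounded by an absolute constant. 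Because $p_m=A\rho^\gamma$ is a smooth function of $\rho$ and $\rho$ stays in a fixed compact range away from blow-up (lower bound from Lemma \ref{lem1}, upper bound from the $L^\infty$-part of the $H^1\cap W^{1,q}$ bound just obtained), the same estimate for $p_m-\overline p$ follows by composition, using $\nabla p_m = A\gamma\rho^{\gamma-1}\nabla\rho$ and the chain rule together with the Moser inequality.

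For the time-derivative bounds I would use the equation itself: $\rho_t=-w\cdot\nabla\rho-\rho\,\mathrm{div}\,w$, so
\begin{equation*}
|\rho_t(t)|_2\le |w(t)|_\infty|\nabla\rho(t)|_2 + |\rho(t)|_\infty|\mathrm{div}\,w(t)|_2\le C|w(t)|_{\mathbb{D}^1\cap D^2}\big(\|\rho(t)-\overline\rho\|_{H^1}+\overline\rho\big)\le M(c_0)c_3,
\end{equation*}
using $|w|_\infty\le C|w|_{\mathbb{D}^1\cap D^2}\le Cc_3$ and $|\mathrm{div}\,w|_2\le |w|_{\mathbb{D}^1}\le c_3$ together with the just-proved $H^1\cap W^{1,q}$ bound. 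For the $L^q$ bound, $|\rho_t(t)|_q\le |w(t)|_\infty|\nabla\rho(t)|_q+|\rho(t)|_\infty|\nabla w(t)|_q\le C c_4 \cdot M(c_0) + M(c_0)\cdot|w(t)|_{D^{1,q}}$; bounding $|w|_{D^{1,q}}\le C|w|_{\mathbb{D}^1\cap D^2}$ (interpolation, $3<q\le 6$) or directly $\le C|w|_{D^2}^{?}$ gives $\le M(c_0)c_4$. The bounds on $(p_m)_t = A\gamma\rho^{\gamma-1}\rho_t$ follow immediately from $|\rho|_\infty\le M(c_0)$ and the $\rho_t$ estimates.

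The main obstacle I anticipate is purely bookkeeping rather than conceptual: organizing the Moser/commutator estimates so that the resulting Grönwall constant in the exponent is genuinely independent of $\delta$ and of the large constants $c_1,c_2$ (it must depend only on $c_4$ through $\int_0^{T^*}|w|_{D^2\cap D^{2,q}}$, which is why the choice $T_1\le (1+c_4^2)^{-1}$ is exactly tuned to kill the $c_4$-dependence in the final bound). One must be careful that the $W^{1,q}$-estimate does not require control of $\nabla^2 w$ in a space stronger than what the hypotheses on $w$ provide; here the point $q\le 6$ and the embeddings $D^{2}\hookrightarrow \mathbb{D}^1\cap L^\infty$, $D^{2,q}\hookrightarrow C^{1,\alpha}$ are what make the closure work. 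A second minor point is the non-vacuum standing assumption $\rho_0\ge\delta$ of Lemma \ref{lem1}: it guarantees the representation formula and hence that all manipulations are legitimate, but it must be verified that it is never used quantitatively, so that $T_1$ and $M(c_0)$, $c_i$ are $\delta$-free.
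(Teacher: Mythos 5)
Your proposal is correct and follows essentially the same route as the paper: the standard energy (Gr\"onwall-type) estimate for the transport equation in $W^{1,r}$, $2\le r\le q$, with the choice $T_1\le(1+c_4^2)^{-1}$ tuned so that $\int_0^t\|\nabla w\|_{W^{1,r}}\,\dif s$ is bounded by an absolute constant, followed by reading off $\rho_t=-\mathrm{div}(\rho w)$ and composing with $\rho\mapsto A\rho^\gamma$ for $p_m$ (the paper simply cites Cho--Kim for the transport estimate you derive). One minor remark: for the $p_m$ bound you do not need the lower bound of $\rho$ from Lemma \ref{lem1} at all, since $\gamma>1$ makes $\rho\mapsto A\rho^{\gamma}$ a $C^1$ function on $[0,\infty)$ — and indeed it must not be used, exactly as you note, to keep the constants $\delta$-free.
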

\begin{proof}
From the continuity equation and  the standard energy estimates as shown in \cite{CK}, for $2\leq r\leq q$, we have
\begin{equation*}\begin{split}
\|\rho(t)-\overline{\rho}\|_{W^{1,r}}\leq \Big(\|\rho_0-\overline{\rho}\|_{W^{1,r}}+\int_0^t \|\nabla w(s)\|_{W^{1,r}}\text{d}s\Big) \exp\Big(C\int_0^t \|\nabla w(s)\|_{W^{1,q}}\text{d}s\Big).
\end{split}
\end{equation*}
Therefore,  the desired estimate for $\rho$ follows by observing that
$$
\int_0^t \|\nabla w(s)\|_{W^{1,r}}\text{d}s\leq t^{\frac{1}{2}}\Big(\int_0^t \|\nabla w(s)\|^2_{ W^{1,r}}\text{d}s\Big)^{\frac{1}{2}}\leq C(c_1t+(c_1t)^{\frac{1}{2}}),
$$
for  $0\leq t \leq T_1=\min(T^*,(1+c^2_4)^{-1})$. The estimate for $\rho_t$ is clear from $\rho_t=-\text{div}(\rho w)$.

Due to $p_m=A\rho^\gamma$ ($\gamma>1$), 
then the  estimate for $p_m$ follows immediately from above.
\end{proof}

Now we give the a priori estimates for $I$.
\begin{lemma}[\textbf{Estimates for specific radiation intensity $I$}]\label{lem:3}
For the strong solution $(I,\rho,u)$ to the Cauchy problem (\ref{eq:1.weeer}) with (\ref{eq:2.2hh}), there exists a time $T_2>0$ such that
\begin{equation}\label{2I}
\begin{split}
\|I\|_{L^2(\mathbb{R}^+\times S^2;C([0,T_2];H^1\cap W^{1,q}(\mathbb{R}^3)))}\leq &Cc_0,\\
 \|I_t\|_{L^2(\mathbb{R}^+\times S^2;C([0,T_2];L^{2}\cap L^{q}(\mathbb{R}^3)))}\leq& M(c_0)c_0,
\end{split}
\end{equation}
for  $T_2= \min(T^*,(1+M(c_0)c^2_4)^{-1})$.
 \end{lemma}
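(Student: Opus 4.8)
The plan is to work directly with the transport-type equation for $I$ written in the form \eqref{rst}, namely
$$
\frac{1}{c}I_t+\Omega\cdot\nabla I+\Big(\sigma_a(\rho)+\int_0^\infty \int_{S^2} \sigma'_s(\rho)\text{d}\Omega' \text{d}v'\Big)I=F(v,\Omega,t,x),\qquad F=S+\int_0^\infty \int_{S^2} \frac{v}{v'}\sigma_s(\rho)\psi\,\text{d}\Omega' \text{d}v',
$$
with $I|_{t=0}=I_0$. Since $\sigma_a=\sigma(v,\Omega,t,x,\rho)\rho\ge 0$ and $\sigma'_s=\overline\sigma'_s\rho\ge 0$, the zeroth-order coefficient is nonnegative, so the equation is a good transport equation with damping. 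First I would estimate $F$: by Lemma \ref{lem:2} the density is controlled, $|\rho(t)|_\infty\le M(c_0)$, hence by \eqref{jia345} and \eqref{zhen1} (with the help of Minkowski's inequality in $L^2(\mathbb{R}^+\times S^2)$ and Hölder in $(v',\Omega')$) one gets
$$
\|F\|_{L^2(\mathbb{R}^+\times S^2;C([0,T^*];H^1\cap W^{1,q}))}\le C c_0 + M(c_0)c_1 \le M(c_0)c_0,
$$
using $c_1\le c_4$ and absorbing constants; the derivative terms $\nabla F$, $F_t$ are handled the same way, paying with $|\nabla\rho|_r$ and $|\rho_t|_2$, both already bounded by Lemma \ref{lem:2} in terms of $c_3,c_4$.

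Next I would run energy estimates in $x$, fiberwise in $(v,\Omega)$. Multiplying \eqref{rst} by $I$ and integrating over $\mathbb{R}^3$, the transport term $\int \Omega\cdot\nabla I\,I\,dx=0$ and the damping term has the right sign, so $\frac{d}{dt}|I(t)|_{L^2_x}^2\le C|F|_{L^2_x}|I|_{L^2_x}$, giving $|I(t)|_{L^2_x}\le |I_0|_{L^2_x}+C\int_0^t|F(s)|_{L^2_x}ds$. Differentiating the equation in $x$, the new zeroth-order term produces a commutator $-(\nabla\sigma_a+\int\nabla\sigma'_s)I$ on the right-hand side, which is controlled by $\|\nabla\sigma_a\|_{L^\infty_x}|I|_{L^2_x}$ etc., and by \eqref{jia345} $\|\nabla\sigma_a\|_{\cdots}\le M(c_0)(|\nabla\rho|_r+1)\le M(c_0)$ via Lemma \ref{lem:2}. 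One obtains a Grönwall inequality of the form $\frac{d}{dt}\|I\|_{H^1_x\cap W^{1,q}_x}\le M(c_0)\|I\|_{H^1_x\cap W^{1,q}_x}+\|F\|_{H^1_x\cap W^{1,q}_x}$, pointwise in $(v,\Omega)$; integrating, taking sup over $[0,T_2]$, then the $L^2(\mathbb{R}^+\times S^2)$-norm in $(v,\Omega)$ and using the $F$-bound gives
$$
\|I\|_{L^2(\mathbb{R}^+\times S^2;C([0,T_2];H^1\cap W^{1,q}))}\le e^{M(c_0)T_2}\big(\|I_0\|+T_2\,M(c_0)c_0\big).
$$
Choosing $T_2=\min(T^*,(1+M(c_0)c_4^2)^{-1})$ makes $e^{M(c_0)T_2}\le C$ and $T_2 M(c_0)c_0$ small, so the bracket is $\le 2\|I_0\|+\tfrac12 c_0\le C c_0$, which is the first inequality in \eqref{2I}. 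Finally, the bound on $I_t$ comes directly from solving \eqref{rst} for $I_t$: $\frac1c I_t=F-\Omega\cdot\nabla I-(\sigma_a+\int\sigma'_s)I$, so $|I_t|_{L^2_x\cap L^q_x}\le C(|F|+|\nabla I|+\|\sigma_a\|_{L^\infty_x}|I|)$, and taking the $L^2(\mathbb{R}^+\times S^2)$-norm and sup in $t$, using $\|\sigma_a\|\le M(c_0)$ and the $I$-estimate just proved, gives $\|I_t\|_{L^2(\mathbb{R}^+\times S^2;C([0,T_2];L^2\cap L^q))}\le M(c_0)c_0$.

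The only genuinely delicate point is bookkeeping the mixed-norm inequalities in $(v,\Omega)$ versus $x$: the scattering integrals couple the $(v',\Omega')$ variables to $(v,\Omega)$, and one must commute $\|\cdot\|_{L^2(\mathbb{R}^+\times S^2)}$ past the $(v',\Omega')$-integral via Minkowski and then apply Hölder using precisely the exponents $\lambda_1\in\{1,\tfrac12\}$, $\lambda_2\in\{1,2\}$ built into \eqref{zhen1}; this is exactly what those two integrability assumptions are for, so no new idea is needed, only care. Everything else — the vanishing of the transport term, the favorable sign of the damping, and the Grönwall closure — is routine, and the choice of $T_2$ is dictated by the requirement that the exponential factor and the time-linear source both stay $O(1)$ and $o(c_0)$ respectively.
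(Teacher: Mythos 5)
Your proposal follows essentially the same route as the paper: fiberwise $L^r$ energy estimates in $x$ for $2\le r\le q$ exploiting the favorable sign of $\sigma_a+\int\sigma'_s\,\mathrm{d}\Omega'\mathrm{d}v'\ge 0$ and the vanishing of the transport term, a commutator estimate after differentiating in $x$, Minkowski/H\"older in $(v',\Omega')$ against \eqref{zhen1}, Gr\"onwall, integration in $(v,\Omega)$, and reading $I_t$ off the equation. One small correction: by \eqref{jia345} the quantity $\nabla\sigma_a$ is only controlled in $L^q_x$, not $L^\infty_x$, so the commutator must be paired as $|\sigma_a|_{D^{1,q}}\,|I|_{qr/(q-r)}$ together with the Sobolev embedding $|I|_{qr/(q-r)}\le C\|I\|_{W^{1,r}}$, exactly as in \eqref{thytffg}--\eqref{da1}.
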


\begin{proof}Let $2\leq r\leq q$. First, multiplying $ (\ref{eq:1.weeer})_2$ by $r|I|^{r-2}I$ and integrating over $\mathbb{R}^3$ with respect to $x$, we have
\begin{equation}\label{thyt}
\begin{split}
\frac{\text{d}}{\text{d}t}|I|_{r}
\leq  C|S|_{r}
+C|\rho|_\infty\int_0^\infty \int_{S^2} \frac{v}{v'}|\psi|_{r} \overline{\sigma}_s \text{d}\Omega' \text{d}v',
\end{split}
\end{equation}
where we  used the fact that $\sigma_a\geq 0$ and $\sigma'_s\geq 0$.
According to the assumptions (\ref{zhen1})-(\ref{jia345}) and  H\"older's inequality, it is not hard to deduce that
\begin{equation}\label{thyt1}
\begin{split}
\frac{\text{d}}{\text{d}t}|I|^2_{r}
\leq& C\Big(|I|^2_{r}+|S|^2_{r}+ |\psi|^2_{L^2(\mathbb{R}^+\times S^2; L^r)}|\rho|^2_\infty  \int_0^\infty \int_{S^2}\Big|\frac{v}{v'}\Big|^2 \overline{\sigma}^2_s\text{d}\Omega' \text{d}v'\Big)\\
\leq& C\Big(|I|^2_{r}+|S|^2_{r}
+M(c_0)c^2_1\int_0^\infty \int_{S^2}\Big|\frac{v}{v'}\Big|^2 \overline{\sigma}^2_s\text{d}\Omega' \text{d}v'\Big),
\end{split}
\end{equation}
for $0\leq t\leq T_1$.

Second, differentiating  $ (\ref{eq:1.weeer})_2$ $\beta$-times ($|\beta|=1$) with respect to $x$, then multiplying the resulting equation by $r|\partial^\beta_xI|^{r-2}\partial^\beta_xI$ and  integrating over $\mathbb{R}^3$ with respect to $x$,  we get
\begin{equation}\label{thytffg}
\begin{split}
\frac{\text{d}}{\text{d}t}|\partial^\beta_x I|_{r}
\leq& C\big(|\partial^\beta_x S|_{r}+|\sigma_a|_{D^{1,q}}|I|_{\frac{qr}{q-r}}\big)+C|\nabla\rho|_{q}|I|_{\frac{qr}{q-r}}\int_0^\infty \int_{S^2} \overline{\sigma}'_s\text{d}\Omega' \text{d}v'\\
&+C \int_0^\infty \int_{S^2} \frac{v}{v'}\Big(|\rho|_{\infty}|\psi|_{D^{1,r}} \overline{\sigma}_s+
 |\rho|_{D^{1,r}} \|\psi\|_{W^{1,q}} \overline{\sigma}_s\Big)\text{d}\Omega' \text{d}v',
\end{split}
\end{equation}
where we also  used the fact that $\sigma_a\geq 0$ and $\sigma'_s\geq 0$.

Since $r<\frac{qr}{q-r}\leq \frac{3r}{3-r} $ if $2\leq r \leq 3$ and $r<\frac{qr}{q-r}\leq +\infty $ if $3<r \leq q$, it follows from Sobolev's imbedding theorem that $|I|_{\frac{qr}{q-r}}\leq C \|I\|_{W^{1,r}}$. From  assumption (\ref{jia345}) and Lemma \ref{lem:2}, we easily have
\begin{equation}\label{da1}
\begin{split}
|\sigma_a|_{D^{1,q}}\leq |\nabla \sigma|_q |\rho|_\infty+|\nabla\rho|_q |\sigma|_\infty\leq M( |\rho|_\infty)\big(|\rho|_\infty|\nabla\rho|_q+|\rho|_\infty+|\nabla\rho|_q\big).
\end{split}
\end{equation}

According to  assumptions (\ref{zhen1})-(\ref{jia345}), estimates (\ref{thytffg})-(\ref{da1}), and the H\"older's inequality,  we obtain
\begin{equation}\label{thytq}
\begin{split}\frac{\text{d}}{\text{d}t}|\partial^\beta_x I|^2_{r}
\leq&  C\big(1+|\sigma_a|_{D^{1,q}}+|\nabla\rho|_{q}\int_0^\infty \int_{S^2} \overline{\sigma}'_s\text{d}\Omega' \text{d}v'\big)\|I\|^2_{W^{1,r}}+C|\partial^\beta_x S|^2_{r}\\
&+M(c_0)\int_0^\infty \int_{S^2} \Big|\frac{v}{v'}\Big|^2 \overline{\sigma}^2_s \text{d}\Omega' \text{d}v'\cdot\int_0^\infty \int_{S^2} |\psi|^2_{D^{1,r}}  \text{d}\Omega' \text{d}v'\\
&+M(c_0)\int_0^\infty \int_{S^2} \Big|\frac{v}{v'}\Big|^2\overline{\sigma}^2_s\text{d}\Omega' \text{d}v'\cdot\int_0^\infty \int_{S^2} \|\psi\|^2_{W^{1,q}} \text{d}\Omega' \text{d}v'\\
\leq&M(c_0)\|I\|^2_{W^{1,r}}+C|\partial^\beta_x S|^2_{r}+M(c_0)c^2_1\int_0^\infty \int_{S^2} \Big|\frac{v}{v'}\Big|^2 \overline{\sigma}^2 _s \text{d}\Omega' \text{d}v'
\end{split}
\end{equation}
for $0\leq t\leq T_1$.
Then combining (\ref{thyt1}) and (\ref{thytq}), it turns out that
\begin{equation}\label{thytgb}
\begin{split}
&\frac{\text{d}}{\text{d}t}\|I\|^2_{W^{1,r}}
\leq M(c_0)\|I\|^2_{W^{1,r}}+C\| S\|^2_{W^{1,r}}+M(c_0)c^2_1\int_0^\infty \int_{S^2} \Big|\frac{v}{v'}\Big|^2 \overline{\sigma}^2_s \text{d}\Omega' \text{d}v'.
\end{split}
\end{equation}
From Gronwall's inequality, we get
\begin{equation}\label{coti}
\begin{split}
&\|I(v,\Omega,\cdot,\cdot)\|^2_{C([0,T_2];H^1\cap W^{1,q}(\mathbb{R}^3))}
\leq \exp (M(c_0)T_2)\|I_0\|^2_{W^{1,r}}\\
&+\exp (M(c_0)T_2)\Big(\int_0^{T_2}\|S\|^2_{W^{1,r}}\text{d}s+M(c_0)c^2_1T_2\int_0^\infty \int_{S^2} \Big|\frac{v}{v'}\Big|^2 \overline{\sigma}^2_s \text{d}\Omega' \text{d}v'\Big),
\end{split}
\end{equation}
where $ T_2= \text{min}(T^*,(1+M(c_0)c^2_4)^{-1})$.

Then integrating the above inequality in $\mathbb{R}^+\times S^2$ with respect to $(v,\Omega)$ and using assumptions (\ref{zhen1})-(\ref{jia345}), we arrive at
$$
\|I(v,\Omega,t,x)\|^2_{L^2(\mathbb{R}^+\times S^2;C([0,T_2];H^1\cap W^{1,q}(\mathbb{R}^3)))}\leq Cc^2_0.
$$
Finally, due to $ I_t=-c\Omega\cdot\nabla I+c\overline{A}_r$, the desired estimates for $I_t$ are obvious.
\end{proof}

Now we give the a priori estimates for $u$.
\begin{lemma}[\textbf{Estimates for velocity $u$}]\label{lem:4}
For the strong solution $(I,\rho,u)$ to the Cauchy problem (\ref{eq:1.weeer}) with (\ref{eq:2.2hh}), there exists a time $T_3$ such that 
\begin{equation}\label{gai}
\begin{split}
|u(t)|^2_{\mathbb{D}^1}+\int_{0}^{t}\Big(|u(s)|^2_{D^2}+|\sqrt{\rho}u_t(s)|^2_{2}\Big)\text{d}s\leq& M(c_0)c^{2}_0,\\
|u(t)|^2_{\mathbb{D}^1\cap D^2}+|\sqrt{\rho}u_t(t)|^2_{2}+\int_{0}^{t}\Big(|u(s)|^2_{D^{2,q}}+|u_t(s)|^2_{\mathbb{D}^1}\Big)\text{d}s\leq& M(c_0)c^{10}_3,
\end{split}
\end{equation}
for  $0\leq t \leq  T_3= \min(T^*,(1+M(c_0)c^{8}_4)^{-1})$.
\end{lemma}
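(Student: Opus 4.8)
The plan is to run a Cho--Kim type energy argument, working throughout with the weighted quantity $\sqrt{\rho}u_t$ rather than $u_t$ itself, so that no estimate depends on the (possibly vanishing) lower bound of $\rho_0$. Using the continuity equation $\rho_t+\text{div}(\rho w)=0$, I first rewrite $(\ref{eq:1.weeer})_3$ in the non-divergence form $\rho u_t+\rho w\cdot\nabla u+Lu=\mathcal R$, where $\mathcal R=-\nabla p_m-\frac1c\int_0^\infty\int_{S^2}A_r\Omega\,\text{d}\Omega\,\text{d}v$. By Lemmas \ref{lem:2} and \ref{lem:3} together with the structural assumptions (\ref{zhen1})--(\ref{jia345}), the forcing term is already controlled: $\sup_{[0,T_2]}\big(|\mathcal R|_2+|\mathcal R|_q\big)\le M(c_0)$ and $\int_0^{T_2}\big(|\mathcal R_t|_2^2+|\mathcal R_t|_q^2\big)\,\text{d}t\le M(c_0)c_0^2$, since $\mathcal R_t$ only brings in $(p_m)_t$ and the $t$-derivative of the radiation quantities, both estimated above. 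The compatibility condition (\ref{kkkkk}) will be used exactly to give $|\sqrt{\rho}u_t(0)|_2=|g_1|_2\le c_0$, which initializes the $t$-differentiated estimate.

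First (low-order) step: multiply $\rho u_t+\rho w\cdot\nabla u+Lu=\mathcal R$ by $u_t$ and integrate over $\mathbb R^3$. The viscous term gives $\tfrac12\tfrac{d}{dt}\big(\mu|\nabla u|_2^2+(\lambda+\mu)|\text{div}\,u|_2^2\big)$, comparable by (\ref{eq:1.5}) to $\tfrac12\tfrac{d}{dt}|u|_{\mathbb D^1}^2$; the convection term is bounded by $\tfrac14|\sqrt\rho u_t|_2^2+C|\rho|_\infty|w|_6^2|\nabla u|_3^2$ and $|\nabla u|_3$ is interpolated between $|\nabla u|_2$ and $|u|_{D^2}$; the forcing term is handled by integrating by parts in $t$ (and, for the pressure part, once in $x$), writing $\int\mathcal R\cdot u_t\,\text{d}x$ as a total $t$-derivative plus a term controlled by $|\mathcal R_t|$ and $|u|_{\mathbb D^1}$, the total derivative being absorbed into the viscous term via $|u|_6\le C|u|_{\mathbb D^1}$. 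Elliptic regularity for $Lu=\mathcal R-\rho u_t-\rho w\cdot\nabla u$ yields $|u|_{D^2}\le C\big(|\mathcal R|_2+|\rho|_\infty^{1/2}|\sqrt\rho u_t|_2+|\rho|_\infty|w|_6|\nabla u|_3\big)$, whose last term is re-absorbed by interpolation. Feeding this back and applying Gronwall on a short interval (where the $w$-norms contribute only through a factor of $t$) gives the first line of (\ref{gai}).

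Second (high-order) step: differentiate $\rho u_t+\rho w\cdot\nabla u+Lu=\mathcal R$ in $t$, multiply by $u_t$ and integrate; after using $\rho_t=-\text{div}(\rho w)$ to rewrite $\tfrac12\int\rho_t|u_t|^2$, one gets $\tfrac12\tfrac{d}{dt}|\sqrt\rho u_t|_2^2+\mu|\nabla u_t|_2^2+(\lambda+\mu)|\text{div}\,u_t|_2^2$ bounded by commutator terms (in $\rho_t$, $w_t$, $\nabla u$) plus $\big|\int\mathcal R_t\cdot u_t\big|$. Each term is estimated by Sobolev embedding together with Lemmas \ref{lem:2}--\ref{lem:3} and the bounds on $w,w_t,\psi$; the decisive point is that every occurrence of $u_t$ \emph{without} the $\sqrt\rho$ weight is routed through $|u_t|_6\le C|\nabla u_t|_2$ and absorbed by the viscous terms on the left, so again no lower bound on $\rho$ enters. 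The elliptic estimates $|u|_{D^2}\le C|Lu|_2$ and $|u|_{D^{2,q}}\le C|Lu|_q$, applied to $Lu=\mathcal R-\rho u_t-\rho w\cdot\nabla u$ with $|\rho u_t|_q$ split by interpolation as $|\rho|_\infty^{1/2}\,|\sqrt\rho u_t|_2^{\theta}\,(|\rho|_\infty^{1/2}|\nabla u_t|_2)^{1-\theta}$, then recover $\sup_{[0,t]}|u|_{\mathbb D^1\cap D^2}$ and $\int_0^t|u|_{D^{2,q}}^2$. A Gronwall argument, with $T_3=\min(T^*,(1+M(c_0)c_4^8)^{-1})$ chosen so that the powers of $c_3,c_4$ generated by the $\mathbb D^1$- and $D^2$-norms of $w$ in the convection and commutator terms close, yields the second line of (\ref{gai}).

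The main obstacle is the uniform (in $\delta$) control of the radiative momentum source and, above all, of its time derivative in $L^2\cap L^q$: this requires combining the radiation estimates of Lemma \ref{lem:3} with the coefficient assumptions (\ref{zhen1})--(\ref{jia345}) while simultaneously respecting the vacuum constraint, under which $u_t$ is available only in the weighted space $L^2(\rho\,\text{d}x)$, forcing one to pass every unweighted $u_t$ through $|\nabla u_t|_2$ (and making the compatibility condition (\ref{kkkkk}) indispensable to start the $t$-differentiated estimate). The secondary delicate point is the bookkeeping of the exact powers of $c_3$ and $c_4$ so that the resulting nonlinear Gronwall inequality closes on a time interval depending only on $c_0$ and the fixed constants.
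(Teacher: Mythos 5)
Your proposal follows essentially the same route as the paper: test the momentum equation with $u_t$, then its time derivative with $u_t$, absorb every unweighted occurrence of $u_t$ into $|\sqrt{\rho}u_t|_2$ or $|\nabla u_t|_2$, initialize the differentiated estimate through the compatibility condition (\ref{kkkkk}), and recover the $D^2$ and $D^{2,q}$ bounds by elliptic regularity plus Gronwall. The only imprecision is that $|\sqrt{\rho}u_t(0)|_2$ is not $|g_1|_2$ alone --- the convection term contributes $\rho_0^{1/2}u_0\cdot\nabla u_0$ at $\tau\to 0$, which is why the paper arrives at the bound $Cc_0^5$ in (\ref{nv6}) rather than $c_0^2$ --- but this does not affect the conclusion.
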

\begin{proof} 
$\text{\underline{Step 1}}$. The estimate of $|u|_{\mathbb{D}^1}$.  Multiplying   $ (\ref{eq:1.weeer})_3$ by $u_t$ and integrating  over $\mathbb{R}^3$, we have
\begin{equation}\label{f1}
\begin{split}
&\int_{\mathbb{R}^3}\rho |u_t|^2 \text{d}x+\frac{1}{2}\frac{d}{dt}\int_{\mathbb{R}^3}\Big(\mu|\nabla u|^2+\big(\lambda+\mu\big)(\text{div}u)^2\Big) \text{d}x\\
=& \int_{\mathbb{R}^3} \Big(-\nabla p_m-\rho w\cdot \nabla u\Big)\cdot u_t\text{d}x-\frac{1}{c}\int_{\mathbb{R}^3} \int_0^\infty \int_{S^2}  A_r u_t\cdot \Omega\text{d}\Omega \text{d}v\text{d}x\\
=&\frac{d}{dt}\Lambda_1(t)- \Lambda_2(t)+E_I,
\end{split}
\end{equation}
where
\begin{equation*}
\begin{split}
\Lambda_1(t)=\int_{\mathbb{R}^3} ( p_m-\overline{p})\text{div}u\text{d}x,\quad \Lambda_2(t)=\int_{\mathbb{R}^3} \Big((p_m)_t\text{div}u+\rho (w\cdot \nabla u)\cdot u_t \Big)\text{d}x.
\end{split}
\end{equation*}
According to Lemma \ref{lem:2}, H\"older's inequality, Gagliardo-Nirenberg inequality and Young's inequality, we have
\begin{equation*}
\begin{split}
\Lambda_1(t)\leq& C|\nabla u|_2 |p_m-\overline{p}|_2
\leq M(c_0)|\nabla u|_2,\\
\Lambda_2(t)\leq&  C\big(|\nabla u|_2 |(p_m)_t|_2+|\rho|^{\frac{1}{2}}_{\infty} |\sqrt{\rho}u_t|_2  |w|_{\infty}  |\nabla u|_2\big)\\
\leq&  M(c_0)c_3|\nabla u|_2+M(c_0)c^2_4|\nabla u|^2_2+\frac{1}{10}|\sqrt{\rho} u_t|^2_2,
\end{split}
\end{equation*}
for $0< t\leq T_2$.

Now we estimate the radiation term $E_{I}$, where
\begin{equation*}
\begin{split}
E_{I}=&-\frac{1}{c}\int_{\mathbb{R}^3} \int_0^\infty \int_{S^2}  A_r u_t\cdot \Omega\text{d}\Omega \text{d}v\text{d}x \\
=&-\frac{1}{c}\int_0^\infty \int_{S^2} \int_{\mathbb{R}^3} \Big(S-\sigma_a I+ \int_0^\infty \int_{S^2} \frac{v}{v'}\sigma_s I'_t\text{d}\Omega'\text{d}v'\Big)u_t \cdot \Omega\text{d}x \text{d}\Omega \text{d}v\\
&+\frac{1}{c}\int_0^\infty \int_{S^2}\int_0^\infty \int_{S^2} \int_{\mathbb{R}^3} \sigma'_s I u_t\cdot \Omega\text{d}x \text{d}\Omega'\text{d}v' \text{d}\Omega \text{d}v
=:\sum_{j=1}^{4} J_j.
\end{split}
\end{equation*}
We estimate $J_{j}$ term by term. From  Lemmas \ref{lem:2}-\ref{lem:3}, Gagliardo-Nirenberg inequality, H\"older's inequality, Young's inequality and  (\ref{zhen1})-(\ref{jia345}), for $0\leq t\leq T_2$, we have

\begin{equation*}
\begin{split}
J_1= &-\frac{1}{c} \int_0^\infty \int_{S^2} \int_{\mathbb{R}^3}  S u_t\cdot \Omega \text{d}x \text{d}\Omega \text{d}v\\
=&-\frac{1}{c} \frac{d}{dt}\int_0^\infty \int_{S^2} \int_{\mathbb{R}^3}  S u\cdot \Omega \text{d}x \text{d}\Omega \text{d}v+\frac{1}{c} \int_0^\infty \int_{S^2} \int_{\mathbb{R}^3}  S_t u\cdot \Omega \text{d}x \text{d}\Omega \text{d}v\\
\leq& -\frac{1}{c} \frac{d}{dt}\int_0^\infty \int_{S^2} \int_{\mathbb{R}^3}  S u\cdot \Omega \text{d}x \text{d}\Omega \text{d}v+C|u|_{\mathbb{D}^1}\int_0^\infty \int_{S^2} |S_t |_{\frac{6}{5}}\text{d}\Omega \text{d}v\\
\leq& -\frac{1}{c} \frac{d}{dt}\int_0^\infty \int_{S^2} \int_{\mathbb{R}^3}  S u\cdot \Omega \text{d}x \text{d}\Omega \text{d}v+Cc_0|  \nabla u|_2,\\
J_{2}=&\frac{1}{c} \int_0^\infty \int_{S^2} \int_{\mathbb{R}^3}  \sigma_a I u_t\cdot \Omega \text{d}x \text{d}\Omega \text{d}v
\leq C|\rho|^{\frac{1}{2}}_{\infty}\int_0^\infty \int_{S^2}|\sqrt{\rho}u_t|_2 |\sigma|_{\infty}|I|_2\text{d}\Omega \text{d}v\\
\leq& \frac{1}{20}|  \sqrt{\rho} u_t|^2_{2}+C|\rho|_\infty\int_0^\infty \int_{S^2} |I |^2_{2}\text{d}\Omega \text{d}v \int_0^\infty \int_{S^2} |\sigma|^2_{\infty}\text{d}\Omega \text{d}v \\
\leq& \frac{1}{20}|  \sqrt{\rho} u_t|^2_{2} +M(c_0)c^{2}_0,\\
J_{3}= &-\frac{1}{c}\int_0^\infty \int_{S^2} \int_0^\infty \int_{S^2} \int_{\mathbb{R}^3}\frac{v}{v'}\sigma_s I'
u_t\cdot \Omega\text{d}x \text{d}\Omega' \text{d}v'\text{d}\Omega \text{d}v\\
\leq &C|\sqrt{\rho}u_t|_{2} |\rho|^{\frac{1}{2}}_\infty \int_0^\infty \int_{S^2} \int_0^\infty \int_{S^2} \frac{v}{v'}
\overline{\sigma}_s|I'|_2\text{d}\Omega' \text{d}v'\text{d}\Omega \text{d}v\\
\leq &M(c_0)\int_0^\infty \int_{S^2}  |I'|^2_2\text{d}\Omega' \text{d}v'\Big(\int_0^\infty \int_{S^2} \Big(\int_0^\infty \int_{S^2} \Big|\frac{v}{v'}\Big|^2
\overline{\sigma}^2_s \text{d}\Omega' \text{d}v'\Big)^{\frac{1}{2}}\text{d}\Omega \text{d}v\Big)^2 \\
&+\frac{1}{20}|  \sqrt{\rho} u_t|^2_{2}\leq \frac{1}{20}|  \sqrt{\rho} u_t|^2_{2}+M(c_0)c^{2}_0,\\
J_{4}= &\frac{1}{c}\int_0^\infty \int_{S^2} \int_0^\infty \int_{S^2} \int_{\mathbb{R}^3}\sigma'_s I
u_t\cdot \Omega\text{d}x \text{d}\Omega' \text{d}v'\text{d}\Omega \text{d}v\\
\leq &C|\sqrt{\rho}u_t|_{2} |\rho|^{\frac{1}{2}}_\infty \int_0^\infty \int_{S^2} \int_0^\infty \int_{S^2}
\overline{\sigma}'_s |I|_2\text{d}\Omega' \text{d}v'\text{d}\Omega \text{d}v\\
\leq &M(c_0)\int_0^\infty \int_{S^2}  |I|^2_2\text{d}\Omega \text{d}v\int_0^\infty \int_{S^2} \Big(\int_0^\infty \int_{S^2}
\overline{\sigma}'_s \text{d}\Omega' \text{d}v'\Big)^{2}\text{d}\Omega \text{d}v+\frac{1}{20}|  \sqrt{\rho} u_t|^2_{2} \\
\leq& \frac{1}{20}|  \sqrt{\rho} u_t|^2_{2}+M(c_0)c^{2}_0.
\end{split}
\end{equation*}
Combining the above estimates for $\Lambda_i$ and $J_j$, it turns out that
\begin{equation}\label{fsd1}
\begin{split}
&\frac{1}{2}\int_{\mathbb{R}^3}\rho |u_t|^2 \text{d}x+\frac{1}{2}\frac{d}{dt}\int_{\mathbb{R}^3}\Big(\mu|\nabla u|^2+\big(\lambda+\mu\big)(\text{div}u)^2\Big) \text{d}x\\
\leq &M(c_0)c^2_4|\nabla u|^2_2+M(c_0)c^2_4 -\frac{1}{c} \frac{d}{dt}\int_0^\infty \int_{S^2} \int_{\mathbb{R}^3}  S u\cdot \Omega \text{d}x \text{d}\Omega \text{d}v.
\end{split}
\end{equation}
Then integrating (\ref{fsd1}) over $(0,t)$, we have
\begin{equation*}
\begin{split}
&\int_{0}^{t}|\sqrt{\rho} u_t(s)|^2_2\text{d}s+|\nabla u(t)|^2_2\\
\leq & M(c_0)(1+c^2_4t)|\nabla u(t)|^2_2+M(c_0)c^2_4t+Cc^2_0,
\end{split}
\end{equation*}
for $0 \leq t \leq T_2$, where we have used the fact that 
$$
 \int_0^\infty \int_{S^2} \int_{\mathbb{R}^3}  S u\cdot \Omega \text{d}x \text{d}\Omega \text{d}v\leq C|u|_{\mathbb{D}^1}\int_0^\infty \int_{S^2} |S |_{\frac{6}{5}}\text{d}\Omega \text{d}v\leq Cc_0|  \nabla u|_2.
$$

From Gronwall's inequality, we have
\begin{equation}\label{plh}
\begin{split}
&\int_{0}^{t}|\sqrt{\rho} u_t(s)|^2_2\text{d}s+|\nabla u(t)|^2_2\\
\leq&  \big(M(c_0)c^2_4t+Cc^2_0\big)\exp \big(M(c_0)(1+c^2_4t)t\big)\leq Cc^{2}_0, \quad \text{for}\quad 0\leq t \leq T_2.
\end{split}
\end{equation}

According to Lemma \ref{lem:2}, (\ref{plh}) and the standard elliptic regularity estimate,  we  have
\begin{equation}\label{nv980}
\begin{split}
| u(t)|_{D^2}\leq & C\Big(|\rho u_t|_2+|\rho w\cdot\nabla u|_2+|\nabla p_m|_2+\int_0^\infty \int_{S^2}|A_r|_2 \text{d}\Omega \text{d}v\Big)(t)\\
\leq & (|\rho|^{\frac{1}{2}}_\infty|\sqrt{\rho}u_t(t)|_{2}+|\rho|_\infty|w|_6 |\nabla u(t)|_{3}+M(c_0)c_0)\\
\leq & M(c_0)\Big(|\sqrt{\rho}u_t(t)|_{2}+c_3|\nabla u|^{\frac{1}{2}}_2|\nabla^2 u|^{\frac{1}{2}}_2+c_0\Big),
\end{split}
\end{equation}
which means that 
\begin{equation}\label{nv9800}
\begin{split}
| u(t)|_{D^2}\leq &  M(c_0)\big(|\sqrt{\rho}u_t(t)|_{2}+c^2_3c_0\big).
\end{split}
\end{equation}
From (\ref{plh}) and (\ref{nv9800}), we know that 
\begin{equation}\label{plmn}
\begin{split}
\int_{0}^{t} |u|^2_{D^2}\text{d}s\leq&C \int_{0}^{t}M(c_0)\big(|\sqrt{\rho}u_t(t)|_{2}+c^2_3c_0\big)^2\text{d}s\leq M(c_0)c^{2}_0.
\end{split}
\end{equation}

$\text{\underline{Step 2}}$. The estimate of $|u|_{D^2}$.
Differentiating $ (\ref{eq:1.weeer})_3$ with respect to $t$, we have
\begin{equation}\label{da2}
\rho u_{tt}+Lu_t=-\rho_tu_t-(\rho w\cdot\nabla u)_t-(\nabla p_m)_t-\frac{1}{c}\int_0^\infty \int_{S^2}(A_r)_t\Omega \text{d}\Omega \text{d}v,
\end{equation}
multiplying  (\ref{da2}) by $u_t$ and integrating the resulting equations over $\mathbb{R}^3$, we obtain
\begin{equation*}
\begin{split}
&\frac{1}{2}\frac{\text{d}}{\text{d}t}\int_{\mathbb{R}^3}\rho |u_t|^2 \text{d}x+\int_{\mathbb{R}^3}(\mu|\nabla u_t|^2+(\lambda+\mu)(\text{div}u_t)^2) \text{d}x\\
\leq& C \int_{\mathbb{R}^3} \Big(|\rho_t w \cdot \nabla u\cdot u_t|+|\rho w_t \cdot \nabla u \cdot u_t| +|\rho w\cdot \nabla u_t \cdot u_t|
+|\ (p_m)_t||\nabla u_t|\Big)\text{d}x\\
-&\frac{1}{c}\int_{\mathbb{R}^3} \int_0^\infty \int_{S^2}  (A_r)_t u_t\cdot \Omega\text{d}\Omega \text{d}v\text{d}x=:\sum_{i=1}^{4}I_i+E_{II}.
\end{split}
\end{equation*}
First, we estimate the fluid terms $\sum_{i=1}^{4}I_i$.  According to  Lemmas \ref{lem:2}-\ref{lem:3}, Gagliardo-Nirenberg inequality,  H\"older's inequality and  Young's inequality, we easily have
\begin{equation}\label{gu1}
\begin{split}
I_1=& \int_{\mathbb{R}^3} |\rho_t w \cdot \nabla u\cdot u_t|\text{d}x \leq C|\rho_t|_{3} |w|_{\infty} |\nabla u|_{2} |  u_t|_{6}\leq M(c_0)c^6_4+\frac{\mu}{20}|  \nabla u_t|^2_{2},\\
\end{split}
\end{equation}
\begin{equation}\label{gu1gh}
\begin{split}
I_2=&  \int_{\mathbb{R}^3} |\rho w_t \cdot \nabla u \cdot u_t| \text{d}x
\leq C|\rho|^{\frac{1}{2}}_{\infty} | w_t|_{6} |\nabla u|_{2} |\sqrt{\rho}u_t|_{3}\\
\leq &\frac{1}{c^2_4} |\nabla w_t|^2_2+M(c_0)c^2_0c^2_4|\sqrt{\rho}u_t|_2|\sqrt{\rho}u_t|_6 \\
\leq& \frac{1}{c^2_4} |\nabla w_t|^2_2+M(c_0)c^8_4|\sqrt{\rho}u_t|^2_2+\frac{\mu}{20}|\nabla u_t|^2_2,\\
I_3=& \int_{\mathbb{R}^3} |\rho w\cdot \nabla u_t \cdot u_t| \text{d}x \\
\leq& C|\rho|^{\frac{1}{2}}_{\infty} |w|_{\infty} |\nabla u_t|_{2} |\sqrt{\rho}u_t|_{2}\leq M(c_0)c^2_4|\sqrt{\rho}u_t|^2_{2}+\frac{\mu}{20}|  \nabla u_t|^2_{2},\\
I_{4}=& \int_{\mathbb{R}^3} |\ (p_m)_t||\nabla u_t| \text{d}x \leq C|(p_m)_t|_{2} |\nabla u_t|_{2}\leq M(c_0)c^{2}_4+\frac{\mu}{20}|  \nabla u_t|^2_{2}.
\end{split}
\end{equation}

Now we estimate the radiation term $E_{II}$.
\begin{equation*}
\begin{split}
E_{II}=&-\frac{1}{c}\int_{\mathbb{R}^3} \int_0^\infty \int_{S^2}  (A_r)_t u_t\cdot \Omega\text{d}\Omega \text{d}v\text{d}x \\
=&-\frac{1}{c}\int_0^\infty \int_{S^2} \int_{\mathbb{R}^3} \Big(S_t-(\sigma_a)_t I-\sigma_a I_t+ \int_0^\infty \int_{S^2} \frac{v}{v'}(\sigma_s I')_t\text{d}\Omega'\text{d}v'\Big)u_t \cdot \Omega\text{d}x \text{d}\Omega \text{d}v\\
&+\frac{1}{c}\int_0^\infty \int_{S^2}\int_0^\infty \int_{S^2} \int_{\mathbb{R}^3} (\sigma'_s I)_t u_t\cdot \Omega\text{d}x \text{d}\Omega'\text{d}v' \text{d}\Omega \text{d}v
=:\sum_{j=5}^{9} J_j.
\end{split}
\end{equation*}
We estimate $J_{j}$ term by term. From  Lemmas \ref{lem:2}-\ref{lem:3}, Gagliardo-Nirenberg inequality, H\"older's inequality, Young's inequality and  (\ref{zhen1})-(\ref{jia345}), for $0\leq t\leq T_2$ we have
\begin{equation*}
\begin{split}
J_5= &-\frac{1}{c} \int_0^\infty \int_{S^2} \int_{\mathbb{R}^3}  S_t u_t\cdot \Omega \text{d}x \text{d}\Omega \text{d}v\\
\leq& C|u_t|_{\mathbb{D}^1}\int_0^\infty \int_{S^2} |S_t |_{\frac{6}{5}}\text{d}\Omega \text{d}v
\leq \frac{\mu}{20}|  \nabla u_t|^2_{2}+Cc^2_0,\\
J_{6}=&\frac{1}{c} \int_0^\infty \int_{S^2} \int_{\mathbb{R}^3}  (\sigma_a)_t I u_t\cdot \Omega \text{d}x \text{d}\Omega \text{d}v
\leq C|u_t|_{6}\int_0^\infty \int_{S^2} |(\sigma_a)_t |_{2}\|I\|_1\text{d}\Omega \text{d}v\\
\leq& \frac{\mu}{20}|  \nabla u_t|^2_{2}+C\int_0^\infty \int_{S^2} \|I \|^2_{1}\text{d}\Omega \text{d}v \int_0^\infty \int_{S^2} \big(|\sigma_t|^2_{2}|\rho|^2_\infty+|\sigma|^2_{\infty}|\rho_t|^2_2\big)\text{d}\Omega \text{d}v \\
\leq& \frac{\mu}{20}|  \nabla u_t|^2_{2} +M(c_0)c^{4}_4 \int_0^\infty \int_{S^2} \big(|\sigma_t|^2_{2}+|\sigma|^2_{\infty}\big)\text{d}\Omega \text{d}v \\
\leq& \frac{\mu}{20}|  \nabla u_t|^2_{2} + M(c_0)c^{4}_4 (|\rho_t |^2_{2}|\rho|^2_\infty+|\rho|^2_\infty) \leq \frac{\mu}{20}|  \nabla u_t|^2_{2} +M(c_0)c^{6}_4,\\
\end{split}
\end{equation*}
where we used the fact that
\begin{equation*}
\begin{split}
&\int_0^\infty \int_{S^2} |(\sigma_a)_t |^2_{2}\text{d}\Omega \text{d}v\leq \int_0^\infty \int_{S^2}\big(|\sigma_t|^2_{2}|\rho|^2_\infty+|\sigma|^2_{\infty}|\rho_t|^2_2\big)\text{d}\Omega \text{d}v\leq M(|\rho|_\infty)|\rho|^2_\infty (|\rho_t |^2_{2}+1).
\end{split}
\end{equation*}
And similarly
\begin{equation*}
\begin{split}
J_{7}=& \frac{1}{c}\int_0^\infty \int_{S^2} \int_{\mathbb{R}^3}  \sigma_a I_t u_t\cdot \Omega \text{d}x \text{d}\Omega \text{d}v
\leq C|\rho|^{\frac{1}{2}}_\infty|\sqrt{\rho}u_t|_{2}\int_0^\infty \int_{S^2} |\sigma |_{\infty}|I_t|_2\text{d}\Omega \text{d}v\\
\leq&  C|\sqrt{\rho}u_t|^2_{2}+M(c_0) \int_0^\infty \int_{S^2} |I_t|^2_2\text{d}\Omega \text{d}v\int_0^\infty \int_{S^2} |\sigma |^2_{\infty}\text{d}\Omega \text{d}v\\
\leq&  C|\sqrt{\rho}u_t|^2_{2}+M(c_0)c^{2}_4,\\
J_{8}= &-\frac{1}{c}\int_0^\infty \int_{S^2} \int_0^\infty \int_{S^2} \int_{\mathbb{R}^3}\frac{v}{v'}(\sigma_s I')_t
u_t\cdot \Omega\text{d}x \text{d}\Omega' \text{d}v'\text{d}\Omega \text{d}v\qquad\qquad\qquad\quad\\
\leq &C|u_t|_{\mathbb{D}^1} |\rho_t|_2 \int_0^\infty \int_{S^2} \int_0^\infty \int_{S^2} \frac{v}{v'}
\overline{\sigma}_s\|I'\|_1\text{d}\Omega' \text{d}v'\text{d}\Omega \text{d}v\qquad\qquad\qquad\quad\\
&+C|\rho|^{\frac{1}{2}}_\infty|\sqrt{\rho}u_t|_{2}\int_0^\infty \int_{S^2} \int_0^\infty \int_{S^2} \frac{v}{v'}
\overline{\sigma}_s |I'_t|_2\text{d}\Omega' \text{d}v'\text{d}\Omega \text{d}v\\
\end{split}
\end{equation*}
\begin{equation*}
\begin{split}
\leq &M(c_0)c^2_4\int_0^\infty \int_{S^2}  \|I'\|^2_1\text{d}\Omega' \text{d}v'\Big(\int_0^\infty \int_{S^2} \Big(\int_0^\infty \int_{S^2} \Big|\frac{v}{v'}\Big|^2
\overline{\sigma}^2_s \text{d}\Omega' \text{d}v'\Big)^{\frac{1}{2}}\text{d}\Omega \text{d}v\Big)^2 \\
& + M(c_0)\int_0^\infty \int_{S^2} |I'_t|^2_2\text{d}\Omega' \text{d}v'\Big(\int_0^\infty \int_{S^2} \Big(\int_0^\infty \int_{S^2} \Big|\frac{v}{v'}\Big|^2
\overline{\sigma}^2_s \text{d}\Omega' \text{d}v'\Big)^{\frac{1}{2}}\text{d}\Omega \text{d}v\Big)^2 \\
&+\frac{\mu}{20}|u_t|^2_{\mathbb{D}^1}+C|\sqrt{\rho}u_t|^2_{2}\leq \frac{\mu}{20}|u_t|^2_{\mathbb{D}^1}+C|\sqrt{\rho}u_t|^2_{2}+M(c_0)c^{4}_4,\\
J_{9}= &\frac{1}{c}\int_0^\infty \int_{S^2} \int_0^\infty \int_{S^2} \int_{\mathbb{R}^3}(\sigma'_s I)_t
u_t\cdot \Omega\text{d}x \text{d}\Omega' \text{d}v'\text{d}\Omega \text{d}v\\
\leq &C|u_t|_{\mathbb{D}^1}|\rho_t|_2 \int_0^\infty \int_{S^2} \int_0^\infty \int_{S^2}
\overline{\sigma}'_s \|I\|_1\text{d}\Omega' \text{d}v'\text{d}\Omega \text{d}v\\
&+C|\rho|^{\frac{1}{2}}_\infty|\sqrt{\rho}u_t|_{2}\int_0^\infty \int_{S^2} \int_0^\infty \int_{S^2}
\overline{\sigma}'_s |I_t|_2\text{d}\Omega' \text{d}v'\text{d}\Omega \text{d}v\\
\leq &M(c_0)c^2_4\int_0^\infty \int_{S^2}  \|I\|^2_1\text{d}\Omega \text{d}v\int_0^\infty \int_{S^2} \Big(\int_0^\infty \int_{S^2}
\overline{\sigma}'_s \text{d}\Omega' \text{d}v'\Big)^{2}\text{d}\Omega \text{d}v+\frac{\mu}{20}|u_t|^2_{\mathbb{D}^1} \\
&+M(c_0)\int_0^\infty \int_{S^2} |I_t|^2_2\text{d}\Omega \text{d}v\int_0^\infty \int_{S^2}\Big( \int_0^\infty \int_{S^2}
\overline{\sigma}'_s \text{d}\Omega' \text{d}v'\Big)^{2}\text{d}\Omega \text{d}v+C|\sqrt{\rho}u_t|^2_{2} \\
\leq& \frac{\mu}{20}|u_t|^2_{\mathbb{D}^1}+C|\sqrt{\rho}u_t|^2_{2}+M(c_0)c^{4}_4.
\end{split}
\end{equation*}
Combining the above estimates for $I_i$ and $J_j$, it turns out that
\begin{equation}\label{kaka}
\begin{split}
&\frac{1}{2}\frac{\text{d}}{\text{d}t}\int_{\mathbb{R}^3}\rho |u_t|^2 \text{d}x+\frac{1}{3}\int_{\mathbb{R}^3}(\mu|\nabla u_t|^2+(\lambda+\mu)(\text{div}u_t)^2) \text{d}x\\
\leq& M(c_0)c^{6}_4+\frac{1}{c^2_4} |\nabla w_t|^2_2+M(c_0)c^8_4|\sqrt{\rho}u_t|^2_{2}+M(c_0)c_0| u|_{D^2}.
\end{split}
\end{equation}
Integrating (\ref{kaka}) over $(\tau,t)$ for $\tau\in (0,t)$, we easily get
 \begin{equation}\label{nv4}
\begin{split}
&|\sqrt{\rho}u_t(t)|^2_{2}+\int_{\tau}^{t}|u_t(s)|^2_{\mathbb{D}^1}\text{d}s\\
\leq& |\sqrt{\rho}u_t(\tau)|^2_{2}
+\int_{\tau}^{t}\Big(M(c_0)c^8_4|\sqrt{\rho}u_t|^2_{2}+M(c_0)c_0| u|_{D^2}\Big)\text{d}s+M(c_0)c^{6}_4t+C.
\end{split}
\end{equation}
From the momentum equations $(\ref{eq:1.weeer})_3$, we have
\begin{equation}\label{nv5}
\begin{split}
&|\sqrt{\rho}u_t(\tau)|^2_{2}\leq C|\rho(\tau)|_{\infty} \|\nabla w(\tau) \|^2_1 |\nabla u(\tau)|^2_2+C\int_{\mathbb{R}^3}\frac{|\Phi(\tau)|^2}{\rho(\tau)}\text{d}x,
\end{split}
\end{equation}
where
$$
\Phi(\tau)=\nabla p_m(\tau) +Lu(\tau)+\frac{1}{c}\int_0^\infty \int_{S^2}A_r(\tau)\Omega \text{d}\Omega \text{d}v.
$$
From the assumptions (\ref{zhen1})-(\ref{jia345}),  Lemma \ref{lem1},  the regularity of $S(v,\Omega, t,x)$ and Minkowski inequality, we easily have
\begin{equation*}
\begin{split}
&\lim_{\tau \mapsto 0}\int_{\mathbb{R}^3}\Big(\frac{|\Phi(\tau)|^2}{\rho(\tau)}-\frac{|\Phi(0)|^2}{\rho_0}\Big)\text{d}x\\
\leq&
\lim_{\tau \mapsto 0}\Big(\frac{1}{\underline{\delta}}\int_{\mathbb{R}^3}|\Phi(\tau)-\Phi(0)|^2\text{d}x+\frac{1}{\delta\underline{\delta}}|\rho(\tau)-\rho_0|_\infty\int_{\mathbb{R}^3}|\Phi(0)|^2\text{d}x\Big)
=0.
\end{split}
\end{equation*}
According to the compatibility condition (\ref{kkkkk}), it is easy to show that
\begin{equation}\label{nv6}
\begin{split}
\limsup_{\tau \rightarrow 0}|\sqrt{\rho}u_t(\tau)|^2_{2}\leq C|\rho_0|_{\infty} \|\nabla u_0 \|^2_1 |\nabla u_0|^2_2+C|g_1|^2_2\leq Cc^5_0.
\end{split}
\end{equation}
Therefore, by letting $\tau\rightarrow 0$ in (\ref{nv4}) and (\ref{plmn}), we have
 \begin{equation}\label{ghj}
\begin{split}
&|\sqrt{\rho}u_t(t)|^2_{2}+\int_{0}^{t}|u_t(s)|^2_{\mathbb{D}^1}\text{d}s\\
\leq&\int_{0}^{t}M(c_0)c^8_4|\sqrt{\rho}u_t|^2_{2}\text{d}s+M(c_0)c^{6}_4t+M(c_0)c^5_0,
\end{split}
\end{equation}
for $0\leq t\leq T_2$.

From Gronwall's inequality, we get
\begin{equation}\label{nv98nn}
\begin{split}
&|\sqrt{\rho}u_t(t)|^2_{2}+\mu\int_{0}^{t}| u_t(s)|^2_{\mathbb{D}^1}\text{d}s\\
\leq& (M(c_0)c^{6}_4t+M(c_0)c^5_0) \exp\big(M(c_0)c^8_4t\big)\leq M(c_0)c^{5}_0
\end{split}
\end{equation}
for $0\leq t \leq  T_3= \min(T^*,(1+M(c_0)c^{8}_4)^{-1})$. Combining (\ref{nv9800}) and (\ref{nv98nn}) yields
\begin{equation*}
\begin{split}
| u(t)|_{D^2}
\leq    M(c_0)\big(|\sqrt{\rho}u_t(t)|_{2}+c^2_3c_0\big)\leq M(c_0)c^{3}_3.
\end{split}
\end{equation*}

Finally, from the standard  elliptic regularity estimate (see \cite{CK2}) and Minkowski inequality, we conclude that
\begin{equation*}
\begin{split}
&\int_{0}^{t}|u(s)|^2_{D^{2,q}}\text{d}s
\leq \int_{0}^{t}\Big( |\rho u_t|^2_q+|\rho w\cdot\nabla u|^2_q+|\nabla p_m|^2_q\Big)(s)\text{d}s\\
&\qquad \qquad \qquad+\int_{0}^{t} \Big(\int_0^\infty \int_{S^2}|A_r|_q\text{d}\Omega \text{d}v\Big)^2(s)\text{d}s\leq M(c_0)c^{10}_3
\end{split}
\end{equation*}
for  $0\leq t \leq T_3$.
\end{proof}

Based on  Lemmas \ref{lem:2}-\ref{lem:4}, we obtain the following local (in time) a priori estimate independent of the lower bound $\delta$ of the initial mass density $\rho_0$:
\begin{equation}\label{priaa}\begin{split}
\|\rho(t)-\overline{\rho}\|_{H^1\cap W^{1,q}}+\|p_m(t)-\overline{p}\|_{H^1\cap W^{1,q}}\leq& M(c_0),\\
 |\rho_t(t)|_{2}+ |(p_m)_t(t)|_{2}\leq M(c_0)c_3,\quad  |\rho_t(t)|_{q}+ |(p_m)_t(t)|_{q}\leq& M(c_0)c_4,\\
\displaystyle
\|I\|_{L^2(\mathbb{R}^+\times S^2;C([0,T_*];H^1\cap W^{1,q}(\mathbb{R}^3)))}\leq& M(c_0)c_0\\
 \|I_t\|_{L^2(\mathbb{R}^+\times S^2;C([0,T_*];L^{2}\cap L^{q}(\mathbb{R}^3)))}\leq& M(c_0)c_0,\\
\displaystyle
|u(t)|^2_{\mathbb{D}^1}+\int_{0}^{T_*}\Big(|u|^2_{D^2}+|\sqrt{\rho}u_t|^2_{2}\Big)(t)\text{d}t\leq& M(c_0)c^{2}_0,\\
\displaystyle
\big(|u(t)|^2_{\mathbb{D}^1\cap D^2}+|\sqrt{\rho}u_t(t)|^2_{2}\big)+\int_{0}^{T_*}\Big(|u|^2_{D^{2,q}}+|u_t|^2_{\mathbb{D}^1}\Big)(t)\text{d}t\leq& M(c_0)c^{10}_3,
\end{split}
\end{equation}
for  $0\leq t\leq T_3$. Therefore, if we define the constants $c_i$ ($i=1,2,3,4$) and $T^*$ by
\begin{equation}\label{dingyi}
\begin{split}
&c_1=c_2=c_3=M(c_0)c_0,\\
& c_4= M(c_0)c^5_3=M^6(c_0)c^5_0, \quad \text{and} \quad T^*=(T, (1+M(c_0)c^8_4)^{-1}),
\end{split}
\end{equation}
then we deduce that 
\begin{equation}\label{pri}\begin{split}
\displaystyle
\|I\|_{L^2(\mathbb{R}^+\times S^2;C([0,T^*];H^1\cap W^{1,q}(\mathbb{R}^3)))}\leq& c_1,\\
 \|I_t\|_{L^2(\mathbb{R}^+\times S^2;C([0,T^*];L^{2}\cap L^{q}(\mathbb{R}^3)))}\leq& c_2,\\
\displaystyle
\sup_{0\leq t\leq T^*}|u(t)|^2_{\mathbb{D}^1}+\int_{0}^{T^*}\Big(|u|^2_{D^2}+|\sqrt{\rho}u_t|^2_{2}\Big)(t)\text{d}t\leq& c^2_3,\\
\displaystyle
\text{ess}\sup_{0\leq t\leq T^*}\big(|u(t)|^2_{\mathbb{D}^1\cap D^2}+|\sqrt{\rho}u_t(t)|^2_{2}\big)+\int_{0}^{T^*}\Big(|u|^2_{D^{2,q}}+|u_t|^2_{\mathbb{D}^1}\Big)(t)\text{d}t\leq& c^2_4,\\
\sup_{0\leq t\leq T^*}(\|\rho(t)-\overline{\rho}\|_{H^1\cap W^{1,q}}+\|p_m(t)-\overline{p}\|_{H^1\cap W^{1,q}})\leq& c_1,\\
\sup_{0\leq t\leq T^*} (|\rho_t(t)|_{2}+ |(p_m)_t(t)|_{2}+|\rho_t(t)|_{q}+ |(p_m)_t(t)|_{q})\leq& c^2_4.
\end{split}
\end{equation}

\subsection{The unique solvability of  the linearized problem with vacuum}\ \\

First we give the following key lemma for the proof of our main result - Theorem \ref{th1}.
\begin{lemma}\label{lemk1}
Let (\ref{tyuu}) hold. 
If $(I_0,\rho_0, u_0)$ satisfies the compatibility condition
\begin{equation}\label{kk2k}
\begin{split}
Lu_0+\nabla p^0_{m}+\frac{1}{c}\int_0^\infty \int_{S^2}A^0_r\Omega \text{d}\Omega \text{d}v=&\rho^{\frac{1}{2}}_0 g_1
\end{split}
\end{equation}
for some $g_1\in L^2$, where $p^0_m=A\rho^\gamma_0,\  A^0_r=A_r(v,\Omega,t=0,x,\rho_0,I_0,I'_0),$ then there exists a unique strong solution $(I,\rho,u)$ to the Cauchy problem (\ref{eq:1.weeer}) with (\ref{eq:2.2hh}) such that
\begin{equation*}\begin{split}
&I\in L^2(\mathbb{R}^+\times S^2;C([0,T^*];H^1\cap W^{1,q}(\mathbb{R}^3))), \ I_t\in L^2(\mathbb{R}^+\times S^2;C([0,T^*];L^{2}\cap L^{q}(\mathbb{R}^3))),\\
&\rho\geq  0,\ \ \rho-\overline{\rho}\in C([0,T^*];H^1\cap W^{1,q}),\quad \rho_t\in C([0,T^*];L^{2}\cap L^{q}),\\
&u\in C([0,T^*];\mathbb{D}^1\cap D^2)\cap  L^2([0,T^*];D^{2,q}),\ u_{t}\in  L^2([0,T^*];\mathbb{D}^1),\ \sqrt{\rho}u_{t}\in  L^\infty([0,T^*];L^2).
\end{split}
\end{equation*}
Moreover, $(I,\rho,u)$ satisfies the local estimate (\ref{pri}).
\end{lemma}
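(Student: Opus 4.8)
The plan is to obtain the strong solution with vacuum as a limit of the non-vacuum solutions furnished by Lemma~\ref{lem1}, exploiting the fact that the a priori bounds of Lemmas~\ref{lem:2}--\ref{lem:4} are \emph{independent} of the lower bound $\delta$ of the density (this is the whole point of the compatibility condition (\ref{kk2k}); cf.\ the Cho--Kim scheme in \cite{CK}). For $\delta\in(0,1]$ I first regularize the data: I shift the far-field constant to $\overline{\rho}+\delta$ and set $\rho_0^\delta:=\rho_0+\delta$, so that $\rho_0^\delta\ge\delta$ while $\rho_0^\delta-(\overline{\rho}+\delta)=\rho_0-\overline{\rho}\in H^1\cap W^{1,q}$; I keep $I_0^\delta:=I_0$; and I define $u_0^\delta$ as the solution (unique, by elliptic regularity) in $\mathbb{D}^1\cap D^2\cap D^{2,q}$ of $Lu_0^\delta=\rho_0^{1/2}g_1-\nabla p_m(\rho_0^\delta)-\frac{1}{c}\int_0^\infty\int_{S^2}A^{0,\delta}_r\Omega\,\text{d}\Omega\,\text{d}v$, where $A^{0,\delta}_r$ denotes $A^0_r$ with $\rho_0$ replaced by $\rho_0^\delta$. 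By construction $(I_0^\delta,\rho_0^\delta,u_0^\delta)$ satisfies the compatibility condition (\ref{kk2k}) with $g_1^\delta:=(\rho_0^\delta)^{-1/2}\rho_0^{1/2}g_1$ and $|g_1^\delta|_2\le|g_1|_2$; using the structural assumptions (\ref{zhen1})--(\ref{jia345}) one verifies that $p_m(\rho_0^\delta)\to p_m(\rho_0)$ and $A^{0,\delta}_r\to A^0_r$ in the relevant norms, hence $u_0^\delta\to u_0$ in $\mathbb{D}^1\cap D^2$ as $\delta\to0$, so that the regularized data obey the bounds defining $c_0$ once $c_0$ is chosen slightly larger, independently of $\delta$. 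Lemma~\ref{lem1} then provides, for each $\delta$, a unique strong solution $(I^\delta,\rho^\delta,u^\delta)$ of (\ref{eq:1.weeer})--(\ref{eq:2.2hh}) with $\rho^\delta\ge\underline{\delta}>0$.

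Since the regularized data satisfy the $c_0$-bounds and the compatibility condition, Lemmas~\ref{lem:2}--\ref{lem:4} apply verbatim to $(I^\delta,\rho^\delta,u^\delta)$ and yield the full estimate (\ref{pri}) on a time interval $[0,T^*]$ that, by the definitions (\ref{dingyi}), depends only on $c_0$ and not on $\delta$. These $\delta$-uniform bounds let me extract a subsequence along which $\rho^\delta-(\overline{\rho}+\delta)\rightharpoonup\rho-\overline{\rho}$ weakly-$*$ in $L^\infty([0,T^*];H^1\cap W^{1,q})$, $u^\delta\rightharpoonup u$ weakly-$*$ in $L^\infty([0,T^*];\mathbb{D}^1\cap D^2)$ and weakly in $L^2([0,T^*];D^{2,q})$, $u^\delta_t\rightharpoonup u_t$ weakly in $L^2([0,T^*];\mathbb{D}^1)$, and $I^\delta\rightharpoonup I$ in the corresponding spaces. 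Because $\rho^\delta_t$ is bounded in $L^\infty([0,T^*];L^2\cap L^q)$, $I^\delta_t$ in $L^2(\mathbb{R}^+\times S^2;C([0,T^*];L^2\cap L^q))$, and $u^\delta_t$ in $L^2([0,T^*];L^6)$, the Aubin--Lions compactness lemma upgrades the above to \emph{strong} convergence of $\rho^\delta$ in $C([0,T^*];L^r_{\mathrm{loc}}(\mathbb{R}^3))$, of $u^\delta$ in $C([0,T^*];W^{1,r}_{\mathrm{loc}}(\mathbb{R}^3))$, and of $I^\delta$ in $L^2(\mathbb{R}^+\times S^2;C([0,T^*];L^r_{\mathrm{loc}}(\mathbb{R}^3)))$ for $2\le r<6$. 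Since the coefficient fields $w$, $\psi$, $S$ in (\ref{eq:1.weeer}) are fixed and all three unknowns converge strongly in these local $L^r$-spaces, every term of the weak formulation (\ref{weak}) passes to the limit, so $(I,\rho,u)$ is a strong solution of (\ref{eq:1.weeer})--(\ref{eq:2.2hh}) with the data $(I_0,\rho_0,u_0)$; the limit inherits (\ref{pri}) by weak lower semicontinuity of the norms, $\rho\ge0$ follows at once from $\rho^\delta\ge\delta$, and the continuity in time asserted in the statement follows from the equations together with the standard weak-continuity-plus-norm-continuity argument.

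Uniqueness for the linearized problem is then clean. First, $\rho$ is uniquely determined by $(\ref{eq:1.weeer})_1$ and $\rho_0$ through the fixed velocity $w$ (equivalently via the representation (\ref{eq:bb1})); with $\rho$ fixed, $(\ref{eq:1.weeer})_2$ is a linear transport--absorption equation for $I$ with a prescribed source, so $I$ is unique as well. With $(\rho,I)$ — hence $p_m(\rho)$ and the whole right-hand side $-\frac{1}{c}\int_0^\infty\int_{S^2}A_r\Omega\,\text{d}\Omega\,\text{d}v$ — fixed, the difference $\widetilde{u}=u_1-u_2$ of two solutions of $(\ref{eq:1.weeer})_3$ satisfies $(\rho\widetilde{u})_t+\text{div}(\rho w\otimes\widetilde{u})+L\widetilde{u}=0$ with $\rho_0\widetilde{u}|_{t=0}=0$; testing with $\widetilde{u}$ and using $\rho_t=-\text{div}(\rho w)$ together with $\mu>0$ and $\lambda+\mu\ge\frac{1}{3}\mu>0$ gives $\frac{\text{d}}{\text{d}t}\frac{1}{2}\int_{\mathbb{R}^3}\rho|\widetilde{u}|^2\,\text{d}x+\mu\int_{\mathbb{R}^3}|\nabla\widetilde{u}|^2\,\text{d}x\le0$, and since $\int_{\mathbb{R}^3}\rho_0|\widetilde{u}(0)|^2\,\text{d}x=0$ we conclude $\nabla\widetilde{u}\equiv0$, hence $\widetilde{u}\equiv0$ because $\widetilde{u}\in\mathbb{D}^1\subset L^6(\mathbb{R}^3)$; in particular the vacuum set causes no ambiguity. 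The estimate (\ref{pri}) for $(I,\rho,u)$ was already recorded in the limit.

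The step I expect to be the genuine obstacle is the construction in the first paragraph: one must arrange \emph{simultaneously} that the regularized data satisfy the compatibility condition (\ref{kk2k}) with $|g_1^\delta|_2$ bounded uniformly in $\delta$, that they obey the $c_0$-type bounds uniformly, and that they converge to $(I_0,\rho_0,u_0)$ in $\mathbb{D}^1\cap D^2$. The delicate point is the $\mathbb{D}^1$-convergence of $u_0^\delta$, which forces one to control $\nabla\big(p_m(\rho_0^\delta)-p_m(\rho_0)\big)$ in the dual norm of $\mathbb{D}^1$, i.e.\ to handle carefully the slow tail decay of $p_m(\rho_0^\delta)-p_m(\rho_0)$ at spatial infinity (where $\rho_0\to\overline{\rho}$) so that it still tends to $0$ there. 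The second, more routine, difficulty is securing in the second paragraph enough compactness of $u^\delta$ to pass to the limit; here the uniform control of $\sqrt{\rho^\delta}u^\delta_t$ in $L^\infty([0,T^*];L^2)$ and of $u^\delta_t$ in $L^2([0,T^*];\mathbb{D}^1)$ — itself obtained in Lemma~\ref{lem:4} precisely via the compatibility condition, see (\ref{nv6}) — is exactly what makes the Aubin--Lions argument go through.
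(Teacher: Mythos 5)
Your overall strategy coincides with the paper's: regularize the density to $\rho_0+\delta$, invoke Lemma \ref{lem1} for the non-vacuum linearized problem, apply the $\delta$-independent bounds of Lemmas \ref{lem:2}--\ref{lem:4} to get (\ref{pri}) uniformly, pass to the limit by weak convergence plus local compactness, and prove uniqueness by the energy method (transport uniqueness for $\rho$, then for $I$, then the $L^2$ energy identity for $\widetilde u$). That part of your argument, including the Aubin--Lions step and the uniqueness computation, is sound and matches the paper.

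The place where you genuinely deviate is the construction of the approximate initial data, and there your argument has a gap. You redefine $u_0^\delta$ as the solution of $Lu_0^\delta=\rho_0^{1/2}g_1-\nabla p_m(\rho_0^\delta)-\frac1c\int_0^\infty\int_{S^2}A_r^{0,\delta}\Omega\,\text{d}\Omega\,\text{d}v$ and then need $u_0^\delta\in\mathbb{D}^1\cap D^2$ with $u_0^\delta\to u_0$ in $\mathbb{D}^1\cap D^2$. Writing $L(u_0^\delta-u_0)=G^\delta$ with $G^\delta=-\nabla\bigl(p_m(\rho_0^\delta)-p_m(\rho_0)\bigr)-\frac1c\int_0^\infty\int_{S^2}(A_r^{0,\delta}-A_r^0)\Omega\,\text{d}\Omega\,\text{d}v$, the $D^2$ part is controlled by $|G^\delta|_2=O(\delta)$, but the $\mathbb{D}^1$ part requires $G^\delta$ to be small in $\dot H^{-1}(\mathbb{R}^3)$, and $L^2(\mathbb{R}^3)\not\subset\dot H^{-1}(\mathbb{R}^3)$. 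The pressure contribution can be saved (it is the gradient of $p_m(\rho_0^\delta)-p_m(\rho_0)-A\bigl((\overline\rho+\delta)^\gamma-\overline\rho^\gamma\bigr)$, which is $O(\delta)$ in $L^2$), but the radiation correction $A_r^{0,\delta}-A_r^0$ is only controlled in $L^2\cap L^q$ by the hypotheses (\ref{zhen1})--(\ref{jia345}) and the regularity $I_0\in L^2(\mathbb{R}^+\times S^2;H^1\cap W^{1,q})$; nothing puts it in $L^{6/5}$ or $\dot H^{-1}$, so neither the existence of $u_0^\delta$ in $\mathbb{D}^1$ nor the convergence $u_0^\delta\to u_0$ in $\mathbb{D}^1$ follows. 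The paper sidesteps this entirely: it keeps $u_0$ and $I_0$ unchanged, perturbs only $\rho_0$ to $\rho_{\delta0}=\rho_0+\delta$, and simply \emph{reads off} the new compatibility function
$g^\delta_1=(\rho_0/\rho_{\delta0})^{1/2}g_1+A\,\nabla(\rho^\gamma_{\delta0}-\rho^\gamma_0)\,\rho_{\delta0}^{-1/2}-\frac1c\int_0^\infty\int_{S^{2}}(A^{0}_r-A^{\delta0}_r)\,\rho_{\delta0}^{-1/2}\,\Omega\,\text{d}\Omega\,\text{d}v$
from the identity (\ref{kkkkk}); then only a uniform $L^2$ bound on $g_1^\delta$ is needed (your $|g_1^\delta|_2\le|g_1|_2$ observation is the analogue, and is in fact cleaner for the first term). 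If you adopt that construction, the rest of your proof goes through unchanged.
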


\begin{proof}We divide the proof into three steps.\\
\underline{Step 1}: Existence.
Let $\delta> 0$ be a constant, and for each $\delta \in (0,1)$, define
\begin{equation*}
\begin{split}
&\rho_{\delta0}=\rho_0+\delta,\,\, (p_m)_{\delta0}=A(\rho_0+\delta)^\gamma,\\
&A^{\delta0}_r=A_r(v,\Omega,t=0,x,\rho_{\delta0},I_0,I'_0).
\end{split}
\end{equation*}
Then from the compatibility condition (\ref{kkkkk}) we have
\begin{equation*}
\begin{split}
&Lu_0+A\nabla \rho^\gamma_{\delta0}+\frac{1}{c}\int_0^\infty \int_{S^2}A^{\delta0}_r\Omega \text{d}\Omega \text{d}v=(\rho_{\delta0})^{\frac{1}{2}} g^\delta_1,
\end{split}
\end{equation*}
where
\begin{equation*}\begin{split}
g^\delta_1=&\Big(\frac{\rho_0}{\rho_{\delta0}}\Big)^{\frac{1}{2}}g_1+A\frac{  \nabla(\rho^\gamma_{\delta0}-\rho^\gamma_0)}{(\rho_{\delta0})^{\frac{1}{2}}}-\frac{1}{c}\int_0^\infty \int_{S^{2}}\frac{(A^{0}_r-A^{\delta0}_r)}{(\rho_{\delta0})^{\frac{1}{2}}}\Omega\text{d}\Omega \text{d}v.
\end{split}
\end{equation*}

It is easy to know from the assumptions (\ref{zhen1})-(\ref{jia345}), that for all small $\delta> 0$,
\begin{equation*}\begin{split}
&1+\|\rho_{\delta0}-\overline{\rho}-\delta\|_{H^1\cap W^{1,q}}+|u_0|_{\mathbb{D}^1\cap D^2}+\|I_0\|_{L^2(\mathbb{R}^+\times S^2; H^1\cap W^{1,q}(\mathbb{R}^3))}+|g^\delta_1|_2\\
&+\|S(v,\Omega, t,x)\|_{ L^2(\mathbb{R}^+\times S^2;C^1([0,T^*];H^1\cap W^{1,q}(\mathbb{R}^3)))\cap C^1([0,T^*]; L^1(\mathbb{R}^+\times S^2;L^1\cap L^2(\mathbb{R}^3)))}\leq c_0.
\end{split}
\end{equation*}
Therefore, corresponding to initial data $(I_0,\rho_{\delta0},u_0)$, there exists a unique strong solution $(I^\delta,\rho^\delta, u^\delta)$ satisfying the local estimate (\ref{pri}). Thus we can choose a subsequence of solutions (still denoted by $(I^\delta,\rho^\delta,u^\delta)$) converging to a limit $(I,\rho,u)$ in weak or weak* sense. Furthermore, for any $R> 0$, thanks to the compact property \cite{jm}, there exists a subsequence (still denoted by $(I^\delta,\rho^\delta,u^\delta)$) satisfying
\begin{equation}\label{ert}\begin{split}
& I^\delta\rightarrow  I\ \text{weakly}\  \text{in}\  L^2(\mathbb{R}^+\times S^2;C([0,T^*];H^1\cap W^{1,q}(\mathbb{R}^3))),\\
&\rho^\delta\rightarrow \rho\ \text{in }\ C([0,T^*];L^2(B_R)),\\
&u^\delta\rightarrow u \ \text{in } \ C([0,T^*];H^1(B_R)),
\end{split}
\end{equation}
where $B_{R}=\{x \in \mathbb{R}^3: |x|<R\}$. By the lower semi-continuity of norms (see \cite{gandi}), it follows from (\ref{ert}) that $(I,\rho,u)$ also satisfies the  estimate (\ref{pri}).
For any $\varphi\in C^\infty_c(\mathbb{R}^+\times S^2\times [0,T^*]\times \mathbb{R}^3)$, from  (\ref{pri}), (\ref{ert}) and assumptions (\ref{zhen1})-(\ref{jia345}) we easily have
\begin{equation*}\begin{split}
\int_0^\infty \int_{S^2}\int_0^{T^*} \int_{\mathbb{R}^3} (\Lambda(v,\Omega,t,x,\rho^\delta)I^\delta -\Lambda(v,\Omega,t,x,\rho)I)\varphi \text{d}x \text{d}t\text{d}\Omega \text{d}v\rightarrow 0,
\ \text{as} \ \delta\rightarrow 0,
\end{split}
\end{equation*}
where $\displaystyle \Lambda=\sigma_a+\int_{0}^{\infty}\int_{S^{2}} \sigma'_s\text{d}\Omega' \text{d}v'$,
and
\begin{equation*}\begin{split}
\int_0^\infty \int_{S^{2}}\int_{0}^{T^*}\int_{\mathbb{R}^3}\int_0^\infty \int_{S^{2}}\frac{v}{v'}\Big(\sigma_s(\rho^\delta)I'^\delta-\sigma_s(\rho)I'\Big)\xi
\text{d}\Omega' \text{d}v'\text{d}x \text{d}t\text{d}\Omega \text{d}v\rightarrow 0,
\ \text{as} \ \delta\rightarrow 0.
\end{split}
\end{equation*}
Then it is easy to show that $(I,\rho,u)$ is a weak solution in the sense of distribution and satisfies the following regularities:
\begin{equation*}\begin{split}
&I\in L^2(\mathbb{R}^+\times S^2;L^\infty([0,T^*];H^1\cap W^{1,q}(\mathbb{R}^3))),\ I_t\in L^2(\mathbb{R}^+\times S^2;L^\infty([0,T^*];L^{2}\cap L^{q}(\mathbb{R}^3))),\\
&\rho\geq 0,\quad \rho-\overline{\rho}\in L^\infty([0,T^*];H^1\cap W^{1,q})),\quad \rho_t\in L^\infty([0,T^*];L^{2}\cap L^{q}),\\
&u\in L^\infty([0,T^*];\mathbb{D}^1\cap D^2)\cap  L^2([0,T^*];D^{2,q}),\ u_{t}\in  L^2([0,T^*];\mathbb{D}^1),\ \sqrt{\rho}u_{t}\in  L^\infty([0,T^*];L^2).
\end{split}
\end{equation*}
\underline{Step 2}: Uniqueness. Let $(I_1,\rho_1,u_1)$ and $(I_2,\rho_2,u_2)$ be two solutions obtained in step 1 with the same initial data. Then by the same method as in \cite{CK} we can get$\rho_1=\rho_2$  and $u_{1}=u_{2}$. Here we omit the details. It is easy to show that $I_1-I_2$ satisfies the following Cauchy problem:
 $$\frac{1}{c}(I_1-I_2)_t+\Omega\cdot\nabla (I_1-I_2)+\Lambda (I_1-I_2)=0, \quad (I_1-I_2)|_{t=0}=0.$$
It follows immediately that $I_{1}=I_{2}$. \\
\underline{Step 3}: The time-continuity. The continuity of $\rho$ can be obtained analogously to \cite{CK}. As for $I$,
by Lemma \ref{lem:2}, for $\forall$ $(v,\Omega)\in R^+\times S^2$ we have
$$
I(v,\Omega,\cdot,\cdot)\in C([0,T^*]; L^2\cap L^q(\mathbb{R}^3)) \cap C([0,T^*]; W^{1,2}\cap W^{1,q}(\mathbb{R}^3)-\text{weak}).
$$
According to ({\ref{coti}}), we have
\begin{equation}\label{thymm}
\begin{split}
\limsup_{t\rightarrow 0}\|I(v,\Omega,\cdot,\cdot)\|^2_{W^{1,r}}\leq \|I_0\|^2_{W^{1,r}},
\end{split}
\end{equation}
which implies that $I(v,\Omega,t,x)$ is right-continuous at $t=0$ (see \cite{teman}). Similarly, form Lemmas \ref{lem:3}-\ref{lem:4} we have
$$
u\in C([0,T^*]; \mathbb{D}^1) \cap C([0,T^*]; D^2-\text{weak}).
$$
From equations $(\ref{eq:1.weeer})_3$ and Lemmas \ref{lem:2}-\ref{lem:4}, we also know that
$$
 \rho u_t\in L^2([0,T^*]; L^2), \ \text{and}\  (\rho u_t)_t\in L^2([0,T^*]; H^{-1}).
$$
From Aubin-Lions lemma we then have  $ \rho u_t \in C([0,T^*]; L^2)$.
From
\begin{equation}\label{thghj}
\displaystyle
Lu=-\rho u_t-\rho w\cdot \nabla u
  -\nabla p_m +\frac{1}{c}\int_0^\infty \int_{S^2}A_r\Omega \text{d}\Omega \text{d}v,
\end{equation}
and the standard elliptic regularity estimates (see \cite{CK2}), we get $u\in C([0,T^*]; D^2)$.
\end{proof}

\subsection{Proof of Theorem \ref{th1}}\ \\

Our proof  is based on the classical iteration scheme and the existence results for the linearized problem obtained  in  Section $3.2$. Like in Section 3.2, we define constants $c_{0}$ and  $c_{1}$, $c_2$, $c_3$, $c_4$ and assume that
\begin{equation*}\begin{split}
&2+\|\rho_0-\overline{\rho}\|_{H^1\cap W^{1,q}}+|u_0|_{\mathbb{D}^1\cap D^2}+\|I_0\|_{L^2(\mathbb{R}^+\times S^2; H^1\cap W^{1,q}(\mathbb{R}^3))}+|g_1|_2\\
&+\|S(v,\Omega, t,x)\|_{ L^2(\mathbb{R}^+\times S^2;C^1([0,T^*];H^1\cap W^{1,q}(\mathbb{R}^3)))\cap C^1([0,T^*]; L^1(\mathbb{R}^+\times S^2;L^1\cap L^2(\mathbb{R}^3)))}\leq c_0.
\end{split}
\end{equation*}
Let $u^0\in C([0,T^*];\mathbb{D}^1\cap D^2)\cap  L^2([0,T^*];D^{2,q}) $ be the solution to the linear parabolic problem
$$
h_t-\triangle h=0 \quad \text{in} \quad (0,+\infty)\times \mathbb{R}^3 \quad \text{and} \quad h|_{t=0}=u_0 \quad \text{in} \quad \mathbb{R}^3.
$$
Let $I^0\in L^2(\mathbb{R}^+\times S^2;C([0,T^*];H^1\cap W^{1,q}(\mathbb{R}^3)))$ be the solution to  the linear parabolic problem
$$
f_t+c\Omega\cdot\nabla f=0, \ \text{in} \ \mathbb{R}^+\times S^2\times (0,+\infty)\times \mathbb{R}^3 \quad \text{and} \quad f|_{t=0}=I_0 \quad \text{in} \quad \mathbb{R}^+\times S^2\times \mathbb{R}^3.
$$
Taking a small time $\overline{T}_1\in (0,T^*)$, we then have
\begin{equation*}\begin{split}
\|I^0\|_{L^2(\mathbb{R}^+\times S^2;C([0,\overline{T}_1];H^1\cap W^{1,q}(\mathbb{R}^3)))}\leq& c_1,\\
 \|I^0_t\|_{L^2(\mathbb{R}^+\times S^2;C([0,\overline{T}_1];L^{2}\cap L^{q}(\mathbb{R}^3)))}\leq& c_2,\\
\sup_{0\leq t\leq \overline{T}_1 }|u^0(t)|^2_{\mathbb{D}^1}+\int_{0}^{\overline{T}_1}|u^0(t)|^2_{D^2}\text{d}t\leq& c^2_3,\\
\sup_{0\leq t\leq \overline{T}_1 }|u^0(t)|^2_{ D^2}+\int_{0}^{\overline{T}_1}\big(|u^0(t)|^2_{D^{2,q}}+|u^0_t(t)|^2_{\mathbb{D}^1}\big)\text{d}t\leq& c^2_4.
\end{split}
\end{equation*}
We divided the proof of Theorem \ref{th1} into two cases: $\overline{\rho}>0$ and $\overline{\rho}=0$.
\subsubsection{\bf{ Case $\overline{\rho}>0$.}}\ \\
\begin{proof} The proof of this case is divided into three steps.\\[2mm]
\underline{Step 1}. The existence of strong solutions. Let $(I^1, \rho^1, u^1)$ be the strong solution to Cauchy problem (\ref{eq:1.weeer}) with (\ref{eq:2.2hh}) and $(w,\psi)=(u^0,I'^0)$. Then we construct approximate solutions $(I^{k+1}, \rho^{k+1}, u^{k+1})$ inductively as follows. Assume that $(I^{k},\rho^k, u^{k})$ was defined for $k\geq 1$, let $(I^{k+1}, \rho^{k+1}, u^{k+1})$  be the unique solution to the Cauchy problem  (\ref{eq:1.weeer}) with (\ref{eq:2.2hh}) with $(w,\psi)$=$( u^{k},I'^{k})$:
\begin{equation}
\label{eq:1.wgo}
\begin{cases}
\displaystyle
\rho^{k+1}_t+\text{div}(\rho^{k+1} u^k)=0,\\[5pt]
\displaystyle
\frac{1}{c}I^{k+1}_t+\Omega\cdot\nabla I^{k+1}=\overline{A}^{k}_r,\\[5pt]
\displaystyle
\rho^{k+1} u^{k+1}_t+\rho^{k+1} u^{k}\cdot \nabla u^{k+1}
  +\nabla p^{k+1}_m +Lu^{k+1}=-\frac{1}{c}\int_0^\infty \int_{S^2}A^{k}_r\Omega \text{d}\Omega \text{d}v
\end{cases}
\end{equation}
with initial data
$$
(I^{k+1}, \rho^{k+1}, u^{k+1})|_{t=0}=(I_0, \rho_0, u_0),
$$
where
\begin{equation*}
\begin{split}
&\overline{A}^{k}_r=S-\sigma^{k+1}_aI^{k+1}+\int_0^\infty \int_{S^2}\Big( \frac{v}{v'}\sigma^{k+1}_sI'^{k} -(\sigma'_s)^{k+1}I^{k+1} \Big)\text{d}\Omega' \text{d}v',\\
&A^{k}_r=S-\sigma^{k+1}_aI^{k+1}+\int_0^\infty \int_{S^2}\Big( \frac{v}{v'}\sigma^{k+1}_sI'^{k+1} -(\sigma'_s)^{k+1}I^{k+1} \Big)\text{d}\Omega' \text{d}v',\\
&p^{k+1}_m=A(\rho^{k+1})^\gamma,\quad
\sigma^{k+1}_a=\sigma_a(v,\Omega,t,x,\rho^{k+1}),\\
& \sigma^{k+1}_s=\sigma_s(v'\rightarrow v,\Omega'\cdot\Omega,\rho^{k+1}),\ (\sigma'_s)^{k+1}=\sigma_s(v\rightarrow v',\Omega\cdot\Omega',\rho^{k+1}).
\end{split}
\end{equation*}
According to the arguments in Sections $3.1$-$3.2$,  we know that the solution sequence $(I^k,\rho^k,p^k_m,u^k)$ still satisfies the priori estimates (\ref{pri}).

Now we show that $(I^k, \rho^k, u^k)$ converges to a limit in a strong sense. Let
$$
\overline{I}^{k+1}=I^{k+1}-I^k,\ \overline{\rho}^{k+1}=\rho^{k+1}-\rho^k,\ \overline{p}^{k+1}_m=p^{k+1}_m-p^k_m,\ \overline{u}^{k+1}=u^{k+1}-u^k,
$$
then we have
\begin{equation}
\label{eq:1.2w}
\begin{cases}
\displaystyle
\overline{\rho}^{k+1}_t+\text{div}(\overline{\rho}^{k+1} u^k)+\text{div}(\rho^{k} \overline{u}^k)=0,\\[8pt]
\displaystyle
\frac{1}{c}\overline{I}^{k+1}_t+\Omega\cdot\nabla \overline{I}^{k+1}+\Big(\sigma^{k+1}_a+\int_0^\infty \int_{S^2}(\sigma'_s)^{k+1}\text{d}\Omega' \text{d}v'\Big)\overline{I}^{k+1}
=L_1,\\[8pt]
\displaystyle
\rho^{k+1} \overline{u}^{k+1}_t+\rho^{k+1} u^k\cdot\nabla \overline{u}^{k+1}+L\overline{u}^{k+1}\\[8pt]
=\overline{\rho}^{k+1}(-u^k_t-u^{k-1}\cdot\nabla u^k)
-\rho^{k+1}\overline{u}^{k}\cdot\nabla u^k-\nabla \overline{p}^{k+1}_m+L_2,
\end{cases}
\end{equation}
where $L_1$ and  $L_2$ are given by
\begin{equation*}\begin{split}
L_1=&-I^k(\sigma^{k+1}_a-\sigma^{k}_a)-\int_0^\infty \int_{S^2}
\Big((\sigma'_s)^{k+1}-(\sigma'_s)^{k}\Big)I^{k}\text{d}\Omega' \text{d}v'\\
&+\int_0^\infty \int_{S^2} \Big(\frac{v}{v'}
\big(\sigma^{k}_s\overline{I}'^{k}+I'^k(\sigma^{k+1}_s-\sigma^{k}_s)\big)\Big)\text{d}\Omega' \text{d}v',\\[6pt]
L_2=&-\frac{1}{c}\int_0^\infty \int_{S^2}\Omega\Big(-\sigma^{k+1}_a\overline{I}^{k+1}-I^k(\sigma^{k+1}_a-\sigma^{k}_a)\Big) \text{d}\Omega \text{d}v\\
&-\frac{1}{c}\int_0^\infty \int_{S^2}\int_0^\infty \int_{S^2}\Omega \frac{v}{v'}\Big(\sigma^{k+1}_s\overline{I}'^{k+1}+I'^k(\sigma^{k+1}_s-\sigma^{k}_s)\Big)\text{d}\Omega' \text{d}v'\text{d}\Omega \text{d}v\\
&-\frac{1}{c}\int_0^\infty \int_{S^2} \int_0^\infty \int_{S^2}-\Omega\Big(I^{k}\big((\sigma'_s)^{k+1}-(\sigma'_s)^{k}\big)+(\sigma'_s)^{k+1}\overline{I}^{k+1}\Big)\text{d}\Omega' \text{d}v'\text{d}\Omega \text{d}v.
\end{split}
\end{equation*}

First, we estimate sequence $\overline{\rho}^{k+1}$. Multiplying $ (\ref{eq:1.2w})_1$ by $\overline{\rho}^{k+1}$ and integrating over $\mathbb{R}^3$, we have
\begin{equation}\label{go64}\begin{cases}
\frac{\text{d}}{\text{d}t}|\overline{\rho}^{k+1}|^2_2\leq A^k_\eta(t)|\overline{\rho}^{k+1}|^2_2+\eta |\nabla\overline{u}^k|^2_2,\\[8pt]
A^k_\eta(t)=C(|\nabla u^k|^2_{W^{1,q}}+\frac{1}{\eta}|\nabla\rho^{k}|^2_{3} +\frac{1}{\eta}|\rho^{k}|^2_{\infty}),\ \text{and} \ \int_0^t A^k_\eta(s)\text{d}s\leq \widehat{C}+\widehat{C}_{\eta}t
\end{cases}
\end{equation}
for $t\in[0,\overline{T}_1]$, where $0<\eta \leq \frac{1}{10}$ is a constant and $\widehat{C}_{\eta}$ is a positive constant  depending on $\frac{1}{\eta}$ and  constant $\widehat{C}$.

Second, we estimate sequence $\overline{I}^{k+1}$.
Multiplying $(\ref{eq:1.2w})_2$ by $\overline{I}^{k+1}$ and integrating over $\mathbb{R}^+\times S^2\times\mathbb{R}^3$, from (\ref{zhen1})-(\ref{jia345}) and the similar arguments used in Lemma \ref{lem:3} we have
\begin{equation}\label{go66}\begin{split}
&\frac{\text{d}}{\text{d}t}\|\overline{I}^{k+1}\|^2_{L^2(\mathbb{R}^+\times S^2;L^2(\mathbb{R}^3))}\\
\leq & \int_0^\infty \int_{S^2} \Big(|\sigma^{k+1}|_{\infty}|I^k|_{\infty}| \overline{\rho}^{k+1}|_2|\overline{I}^{k+1}|_2+ |\overline{\sigma}^{k+1,k}|_{\infty}| \rho^{k}|_\infty|I^k|_{\infty}| | \overline{\rho}^{k+1}|_2|\overline{I}^{k+1}|_2\Big)\text{d}\Omega \text{d}v\\
&+\int_0^\infty \int_{S^2} \int_0^\infty \int_{S^2}\frac{v}{v'}\overline{\sigma}_s\Big(|\rho|_\infty|\overline{I}'^k|_2|\overline{I}^{k+1}|_2+\|I'^k\|_{W^{1,q}}|\overline{I}^{k+1}|_2|\overline{\rho}^{k+1}|_2\Big) \text{d}\Omega' \text{d}v'\text{d}\Omega \text{d}v\\
&+\int_0^\infty \int_{S^2} \int_0^\infty \int_{S^2}\overline{\sigma}'_s\|I^k\|_{W^{1,q}}|\overline{I}^{k+1}|_2|\overline{\rho}^{k+1}|_2 \text{d}\Omega' \text{d}v'\text{d}\Omega \text{d}v\\
\leq& D^k_\eta(t)\|\overline{I}^{k+1}\|^2_{L^2(\mathbb{R}^+\times S^2;L^2(\mathbb{R}^3))}+|\overline{\rho}^{k+1}|^2_2+\eta\|\overline{I}^{k}\|^2_{L^2(\mathbb{R}^+\times S^2;L^2(\mathbb{R}^3))},
\end{split}
\end{equation}
where we have used the facts  $\sigma_a\geq 0$ and  $\sigma'_s\geq 0$. $\sigma^{k+1}$,
$\overline{\sigma}^{k+1,k}$
and $D^k_\eta(t)$ are  defined by
\begin{equation*}
\begin{split}
&\sigma^{k+1}=\sigma(v,\Omega,t,x,\rho^{k+1}),\quad \overline{\sigma}^{k+1,k}=\overline{\sigma}(v,\Omega,t,x,\rho^{k+1},\rho^k),\\
&D^k_\eta(t)=C\Big(\big(1+|\sigma^{k+1}|^2_\infty+|\overline{\sigma}^{k+1,k}|^2_\infty\big)\|I^{k}\|^2_{L^2(\mathbb{R}^+\times S^2; H^1\cap W^{1,q}(\mathbb{R}^3))}+\frac{1}{\eta}
|\rho|^2_{\infty}\Big).
\end{split}
\end{equation*}
From the  estimate  (\ref{pri}), we also have  $\int_0^t D^k_\eta(s)\text{d}s\leq \widehat{C}+\widehat{C}_{\eta}t$, for $t\in[0,\overline{T}_1]$.

Finally, multiplying $ (\ref{eq:1.2w})_3$ by $\overline{u}^{k+1}$ and integrating  over $\mathbb{R}^3$, we have
\begin{equation*}\begin{split}
&\frac{1}{2}\frac{\text{d}}{\text{d}t}|\sqrt{\rho}^{k+1}\overline{u}^{k+1}|^2_2+\mu|\nabla\overline{u}^{k+1} |^2_2+(\lambda+\mu)|\text{div}\overline{u}^{k+1} |^2_2\\
=& \int_{\mathbb{R}^3}\Big(-\overline{\rho}^{k+1}u^k_t\cdot\overline{u}^{k+1}-\overline{\rho}^{k+1}(u^{k-1}\cdot\nabla u^k)\cdot\overline{u}^{k+1}
-\rho^{k+1}(\overline{u}^{k}\cdot\nabla u^k)\cdot\overline{u}^{k+1}\Big)\text{d}x\\
&+\int_{\mathbb{R}^3}\Big(-\nabla \overline{p}^{k+1}_m\cdot\overline{u}^{k+1}+L_2\overline{u}^{k+1}\Big)\text{d}x
=:\sum_{i=5}^{14} I_i.
\end{split}
\end{equation*}

For the fluid terms $I_{5}-I_{8}$, according to the Gagliardo-Nirenberg inequality, Minkowski inequality and H\"older's inequality, it is not hard to show that
\begin{equation*}\begin{split}
& I_{5}= -\int_{\mathbb{R}^3}\overline{\rho}^{k+1}u^k_t\cdot\overline{u}^{k+1}\text{d}x \leq C\int_{\mathbb{R}^3}|\overline{\rho}^{k+1}||u^k_t||\overline{u}^{k+1}|\text{d}x, \\
& I_{6}= -\int_{\mathbb{R}^3}\overline{\rho}^{k+1}(u^{k-1}\cdot\nabla u^k)\cdot\overline{u}^{k+1}
\text{d}x\leq C|\overline{\rho}^{k+1}|_{2}|\nabla\overline{u}^{k+1}|_2\|\nabla u^k\|_1\|\nabla u^{k-1}\|_1,\\
&I_{7}=  -\int_{\mathbb{R}^3}\rho^{k+1}(\overline{u}^{k}\cdot\nabla u^k)\cdot\overline{u}^{k+1}\text{d}x\leq C|\overline{\rho}^{k+1}|^{\frac{1}{2}}_{\infty}|\sqrt{\rho}^{k+1}\overline{u}^{k+1}|_2\|\nabla u^k\|_1|\nabla \overline{u}^k|_2,\\
& I_{8}=-\int_{\mathbb{R}^3}\nabla \overline{p}^{k+1}_m\cdot\overline{u}^{k+1}\text{d}x=\int_{\mathbb{R}^3}
\overline{p}^{k+1}_m\text{div}\overline{u}^{k+1}\text{d}x
\leq C|\overline{p}^{k+1}_m|_{2}|\nabla\overline{u}^{k+1}|_2.
\end{split}
\end{equation*}
For the radiation related terms $I_{9}-I_{14}$,
\begin{equation*}\begin{split}
I_{9}=&-\frac{1}{c}\int_{\mathbb{R}^3}\int_0^\infty\int_{S^2}\Omega\cdot\overline{u}^{k+1}\Big(-\sigma^{k+1}_a\overline{I}^{k+1}\Big)\text{d}\Omega\text{d}v\text{d}x \quad\\
\leq & C|\sqrt{\rho}^{k+1}\overline{u}^{k+1}|_2|\rho^{k+1}|^{\frac{1}{2}}_\infty\|\overline{I}^{k+1}\|^2_{L^2(\mathbb{R}^+\times S^2;L^2(\mathbb{R}^3))}\|\sigma^{k+1}\|^2_{L^2(\mathbb{R}^+\times S^2;L^\infty(\mathbb{R}^3))},\\
 I_{10}=&-\frac{1}{c}\int_{\mathbb{R}^3}\int_0^\infty \int_{S^2}\Omega\cdot\overline{u}^{k+1}\Big(-I^k(\sigma^{k+1}_a-\sigma^{k}_a)\Big) \text{d}\Omega \text{d}v\text{d}x\\
\leq &C|\overline{\rho}^{k+1}|_{2}|\nabla\overline{u}^{k+1}|_2\|I^{k}\|_{L^2(\mathbb{R}^+\times S^2;H^1(\mathbb{R}^3))}\|\sigma^{k+1}\|_{L^2(\mathbb{R}^+\times S^2;L^\infty(\mathbb{R}^3))}\\
&+C|\rho^k|_\infty|\overline{\rho}^{k+1}|_2|\nabla\overline{u}^{k+1}|_2\|I^{k}\|_{L^2(\mathbb{R}^+\times S^2;H^1(\mathbb{R}^3))}\|\overline{\sigma}^{k+1,k}\|_{L^2(\mathbb{R}^+\times S^2;L^\infty(\mathbb{R}^3))},\\
 I_{11}=&-\frac{1}{c}\int_{\mathbb{R}^3}\int_0^\infty \int_{S^2}\int_0^\infty \int_{S^2}\frac{v}{v'}\Omega\cdot\overline{u}^{k+1}\sigma^{k+1}_s\overline{I}'^{k+1}\text{d}\Omega'\text{d}v'\text{d}\Omega \text{d}v\text{d}x \qquad\qquad\quad\\
 \leq &  C|\sqrt{\rho}^{k+1}\overline{u}^{k+1}|_2|\rho^{k+1}|^{\frac{1}{2}}_\infty\|\overline{I}^{k+1}\|_{L^2(\mathbb{R}^+\times S^2;L^2(\mathbb{R}^3))},\\
 \end{split}
\end{equation*}
\begin{equation*}\begin{split}
 I_{12}=&-\frac{1}{c}\int_{\mathbb{R}^3}\int_0^\infty \int_{S^2}\int_0^\infty \int_{S^2}\frac{v}{v'}\Omega\cdot\overline{u}^{k+1}I'^k(\sigma^{k+1}_s-\sigma^{k}_s)\text{d}\Omega'\text{d}v'\text{d}\Omega \text{d}v\text{d}x\\
 \leq &C|\nabla\overline{u}^{k+1}|_2|\overline{\rho}^{k+1}|_{2}\|I^k\|_{L^2(\mathbb{R}^+\times S^2;H^1(\mathbb{R}^3))},\\
 I_{13}=&\frac{1}{c}\int_{\mathbb{R}^3}\int_0^\infty \int_{S^2} \int_0^\infty \int_{S^2}\Omega \cdot\overline{u}^{k+1}I^{k}\Big((\sigma'_s)^{k+1}-(\sigma'_s)^{k}\Big)\text{d}\Omega' \text{d}v'\text{d}\Omega \text{d}v\text{d}x\\
 \leq& C|\nabla{u}^{k+1}|_2|\overline{\rho}^{k+1}|_{2}\|I^{k}\|_{L^2(\mathbb{R}^+\times S^2;H^1(\mathbb{R}^3))},\\
 I_{14}=&\frac{1}{c}\int_{\mathbb{R}^3}\int_0^\infty \int_{S^2} \int_0^\infty \int_{S^2}\Omega \cdot\overline{u}^{k+1} (\sigma'_s)^{k+1}\overline{I}^{k+1}\text{d}\Omega' \text{d}v'\text{d}\Omega \text{d}v\text{d}x\\
 \leq& C|\sqrt{\rho}^{k+1}\overline{u}^{k+1}|_2|\rho^{k+1}|^{\frac{1}{2}}_\infty\|\overline{I}^{k+1}\|_{L^2(\mathbb{R}^+\times S^2;L^2(\mathbb{R}^3))}.
\end{split}
\end{equation*}
Then combining the above estimates, we have
\begin{equation}\label{gogo1}\begin{split}
&\frac{\text{d}}{\text{d}t}|\sqrt{\rho}^{k+1}\overline{u}^{k+1}|^2_2+\mu|\nabla\overline{u}^{k+1} |^2\\
\leq& F^k_\eta(t)|\sqrt{\rho}^{k+1}\overline{u}^{k+1}|^2_2+F^k_2(t)|\overline{\rho}^{k+1}|^2_{2 }+M(C(c_0,c_1))|\overline{\rho}^{k+1}|^2_{2 }\\
&+F^k_3(t)\|\overline{I}^{k+1}\|^2_{L^2(\mathbb{R}^+\times S^2;L^2(\mathbb{R}^3))}+\eta|\nabla\overline{u}^{k}|^2_2+C\int_{\mathbb{R}^3}|\overline{\rho}^{k+1}||u^k_t||\overline{u}^{k+1}|\text{d}x,
\end{split}
\end{equation}
where
\begin{equation*}
\begin{split}
F^k_\eta(t)=&C\Big(1+\frac{1}{\eta}|\rho^{k+1}|_{\infty}\|\nabla u^{k}\|^2_1\Big),\\
F^k_2(t)=&C\big(\|\nabla u^{k-1}\|^2_1\|\nabla u^{k}\|^2_1+
\|I^{k}\|^2_{L^2(\mathbb{R}^+\times S^2;H^1(\mathbb{R}^3))}(\|\sigma^{k+1}\|^2_{L^2(\mathbb{R}^+\times S^2;L^\infty(\mathbb{R}^3))}+1)\big)\\
&+C|\rho^k|^2_\infty\|I^{k}\|^2_{L^2(\mathbb{R}^+\times S^2;H^1(\mathbb{R}^3))}\|\overline{\sigma}^{k+1,k}\|^2_{L^2(\mathbb{R}^+\times S^2;L^\infty(\mathbb{R}^3))},\\
F^k_3(t)=&C|\rho^{k+1}|_{\infty}\big(\|\sigma^{k+1}\|^2_{L^2(\mathbb{R}^+\times S^2;L^\infty(\mathbb{R}^3))}+1\big),\\
\end{split}
\end{equation*}
and we also have  $\int_0^t \big(F^k_\eta(s)+F^k_2(s)+F^k_3(s)\big)\text{d}s\leq \widehat{C}+\widehat{C}_\eta t$, for $t\in [0,\overline{T}_1]$.

To estimate $\int_{\mathbb{R}^3}|\overline{\rho}^{k+1}||u^k_t||\overline{u}^{k+1}|\text{d}x$,
we need the following Lemma.
\begin{lemma}[\textbf{The lower bound of the mass density at far field}]\label{zhengding}\  \\
There exists a sufficiently large $R>1$ and a time $\overline{T}_2\in (0,\overline{T}_1)$ small enough such that
$$
\frac{3}{8}\leq \rho^{k+1}(t,x)\leq \frac{5}{2}, \quad \forall \ (t,x)\in [0,\overline{T}_2]\times B^C_R,
$$
where the constants $R> 0$ and  $\overline{T}_2$ is independent of $k$, and $B^C_R=\mathbb{R}^3\setminus B_R$.
\end{lemma}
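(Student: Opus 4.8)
The plan is to read off the bound directly from the Lagrangian representation of the linearized continuity equation. Since $(\ref{eq:1.wgo})_{1}$ has the form $\rho^{k+1}_{t}+\mathrm{div}(\rho^{k+1}u^{k})=0$, with $u^{k}$ playing the role of the transport field in Lemma \ref{lem1}, formula $(\ref{eq:bb1})$ yields
\begin{equation*}
\rho^{k+1}(t,x)=\rho_{0}\big(U^{k}(0;t,x)\big)\exp\Big(-\int_{0}^{t}\mathrm{div}\,u^{k}\big(s,U^{k}(s;t,x)\big)\,\mathrm{d}s\Big),
\end{equation*}
where $U^{k}$ is the flow of $u^{k}$; this flow is well defined and $x\mapsto U^{k}(s;t,x)$ is a Lipschitz homeomorphism because $\int_{0}^{T^{*}}|\nabla u^{k}(s)|_{\infty}\,\mathrm{d}s<\infty$ by the estimate $(\ref{pri})$, exactly as in the proof of Lemma \ref{lem1}. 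It therefore suffices to choose $R$ large and $\overline{T}_{2}$ small, \emph{independently of $k$}, so that for $(t,x)\in[0,\overline{T}_{2}]\times B^{C}_{R}$ the factor $\rho_{0}(U^{k}(0;t,x))$ is close to $\overline{\rho}$ and the exponential factor is close to $1$.

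For the first factor, the hypotheses $\rho_{0}-\overline{\rho}\in W^{1,q}$ with $q>3$ and $\rho_{0}\to\overline{\rho}$ as $|x|\to\infty$ imply that $\rho_{0}-\overline{\rho}$ is continuous on $\mathbb{R}^{3}$ and vanishes at infinity, so $\omega(R_{0}):=\sup_{|y|\ge R_{0}}|\rho_{0}(y)-\overline{\rho}|\to0$ as $R_{0}\to\infty$. To keep the backward characteristics inside $B^{C}_{R_{0}}$ we use the embedding $|u^{k}|_{\infty}\le C\|u^{k}\|_{\mathbb{D}^{1}\cap D^{2}}$ together with $(\ref{pri})$, which give $|u^{k}(t)|_{\infty}\le Cc_{4}$ for a.e.\ $t$ and hence $|U^{k}(s;t,x)-x|\le Cc_{4}t$ for $0\le s\le t$. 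Fixing first $R_{0}$ large, then $\overline{T}_{2}\in(0,\overline{T}_{1})$ with $Cc_{4}\overline{T}_{2}\le1$, and setting $R:=R_{0}+1$, we obtain $U^{k}(s;t,x)\in B^{C}_{R_{0}}$ for all $s\in[0,t]$, $t\le\overline{T}_{2}$, $x\in B^{C}_{R}$, so that $|\rho_{0}(U^{k}(0;t,x))-\overline{\rho}|\le\omega(R_{0})$.

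For the exponential factor, the embedding $|\nabla u^{k}|_{\infty}\le C\|u^{k}\|_{W^{1,q}}\le C(|\nabla u^{k}|_{q}+|u^{k}|_{D^{2,q}})$, the interpolation $|\nabla u^{k}|_{q}\le C|\nabla u^{k}|_{2}^{\theta}|u^{k}|_{D^{2}}^{1-\theta}$, and $(\ref{pri})$ give $|\nabla u^{k}(t)|_{q}\le Cc_{4}$ and $\int_{0}^{t}|u^{k}(s)|_{D^{2,q}}\,\mathrm{d}s\le Ct^{1/2}c_{4}$, whence
\begin{equation*}
\Big|\int_{0}^{t}\mathrm{div}\,u^{k}\big(s,U^{k}(s;t,x)\big)\,\mathrm{d}s\Big|\le\int_{0}^{t}|\nabla u^{k}(s)|_{\infty}\,\mathrm{d}s\le C\big(c_{4}t+c_{4}t^{1/2}\big),
\end{equation*}
which is $\le\log(9/8)$, say, after $\overline{T}_{2}$ is shrunk once more, again uniformly in $k$. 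Combining the two estimates, the representation formula shows that on $[0,\overline{T}_{2}]\times B^{C}_{R}$ the density $\rho^{k+1}$ lies in an arbitrarily small neighbourhood of $\overline{\rho}$ (make $\omega(R_{0})$ small by enlarging $R_{0}$, then make the exponential factor close to $1$ by shrinking $\overline{T}_{2}$), hence in particular within $[\tfrac{3}{8},\tfrac{5}{2}]$ as claimed; since $R_{0}$, $R$ and $\overline{T}_{2}$ depend only on $\rho_{0}$, $\overline{\rho}$ and the fixed constants entering $(\ref{pri})$, they are independent of $k$. I expect no serious obstacle: the only delicate points are the order of the quantifiers — the radius $R$ must be fixed from the decay of $\rho_{0}$ alone, before $\overline{T}_{2}$ is chosen — and the verification that $U^{k}$ is a genuine flow, both of which are routine given $\nabla u^{k}\in L^{1}([0,T^{*}];L^{\infty})$.
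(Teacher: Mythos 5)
Your proof is correct and follows essentially the same route as the paper: the Lagrangian representation of $\rho^{k+1}$ along the flow of $u^k$, a uniform-in-$k$ bound $|U^{k+1}(0;t,x)-x|\le \overline{C}t$ from $|u^k|_\infty\le Cc_4$ to keep backward characteristics in the far field where $\rho_0$ is close to $\overline{\rho}$, and the bound $\int_0^t|\nabla u^k|_\infty\,\mathrm{d}s\le \overline{C}t^{1/2}$ from \eqref{pri} to control the exponential factor. The only (shared, harmless) looseness is that the numerical bounds $\tfrac38$ and $\tfrac52$ implicitly carry factors of $\overline{\rho}$, exactly as in the paper's own derivation.
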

\begin{proof}
From $\rho_0-\overline{\rho}\in H^1\cap W^{1,q}$ and the embedding $W^{1,q}\hookrightarrow C_0$, where $C_0$ is the set of all continuous functions on
$\mathbb{R}^3$ vanishing at infinity, we can choose a sufficiently large $R> 1$  such that
\begin{equation}\label{zhenglem}
\frac{3}{8}\overline{\rho}\leq \rho_0\leq \frac{5}{2}\overline{\rho} \quad \text{for} \quad x \in B^C_R.
\end{equation}
From the proof of Lemma \ref{lem1} we know that
\begin{equation}\label{zhengleme}
\rho^{k+1}(t,x)=\rho_0(U^{k+1}(0;t,x))\exp\Big(-\int_{0}^{t}\textrm{div} u^k(s;U^{k+1}(s,t,x))\text{d}s\Big),
\end{equation}
where  $U^{k+1}\in C([0,\overline{T}_1]\times[0,\overline{T}_1]\times \mathbb{R}^3)$ is the solution to the initial value problem
\begin{equation}\label{gongshi}
\begin{cases}
\frac{d}{ds}U^{k+1}(s;t,x)=u^k(s,U^{k+1}(s;t,x)),\quad 0\leq s\leq \overline{T}_1,\\[8pt]
U^{k+1}(t,t,x)=x, \qquad \qquad \qquad 0\leq t\leq \overline{T}_1,\ x\in \mathbb{R}^3.
\end{cases}
\end{equation}
The local estimate (\ref{pri}) leads to
\begin{equation}\label{zhengle}
\begin{split}
&\int_0^t|\textrm{div} u^k(s,U^{k+1}(s,t,x))|\text{d}s\leq \int_0^t|\nabla u^k|_\infty\text{d}s\leq \overline{C}t^{1/2}\leq \ln 2.
\end{split}
\end{equation}
From the ODE problem (\ref{gongshi}), we get
\begin{equation}\label{zhengle1}
\begin{split}
&|U^{k+1}(0;t,x)-x|=|U^{k+1}(0;t,x)-U^{k+1}(t;t,x)|\\
\leq& \int_0^t| u^k(\tau,U^{k+1}(\tau;t,x))|\text{d}\tau\leq \overline{C}t\leq R/2,
\end{split}
\end{equation}
for all $(t,x)\in [0,\overline{T}_2]\times \mathbb{R}^3$, where $\overline{C}$ is a positive constant independent of $k$, and $\overline{T}_2$ is a small positive time depending only on $\overline{C}$ and $\overline{T}_1$. That means,
 $$ U^{k+1}(0,t,x)\in B^C_{R/2}, \quad \text{for}\quad  (t,x)\in [0,\overline{T}_2]\times B^C_{R}.$$
 Then combining (\ref{zhenglem}), (\ref{zhengleme}), (\ref{zhengle}) and (\ref{zhengle1}),  the desired conclusion is obtained.
\end{proof}
Based on Lemma \ref{zhengding}, for $t\in [0,\overline{T}_2]$, we have
\begin{equation}\label{zhengchu}
\begin{split}
&\int_{B_R}|\overline{\rho}^{k+1}||u^k_t||\overline{u}^{k+1}|\text{d}x
\leq C|\nabla u^k_t |^2_2|\overline{\rho}^{k+1} |^2_2+\frac{1}{8}\mu|\nabla\overline{u}^{k+1} |^2_2,
\end{split}
\end{equation}
and
\begin{equation}\label{zhengchu1}
\begin{split}
&\int_{B^C_R}|\overline{\rho}^{k+1}||u^k_t||\overline{u}^{k+1}|\text{d}x
\leq\frac{C}{\sqrt{\rho}^{k+1}} |\overline{\rho}^{k+1} |_2|\nabla u^k_t |_2|\sqrt{\rho}^{k+1}\overline{u}^{k+1}|_3\\
&\quad \quad   \leq C|\nabla u^k_t |^2_2|\overline{\rho}^{k+1} |^2_2+\frac{1}{8}\mu|\nabla\overline{u}^{k+1} |^2_2+C|\sqrt{\rho}^{k+1}\overline{u}^{k+1}|^2_2.
\end{split}
\end{equation}
Define
\begin{equation*}\begin{split}
\Gamma^{k+1}=&\sup_{0\leq t \leq \overline{T}_2}\Big(\|\overline{I}^{k+1}(t)\|^2_{L^2(\mathbb{R}^+\times S^2;L^2(\mathbb{R}^3))}+|\overline{\rho}^{k+1}(t)|^2_{
 2}+|\sqrt{\rho}^{k+1}\overline{u}^{k+1}(t)|^2_2\Big),
\end{split}
\end{equation*}
then from (\ref{go64})-(\ref{gogo1}), (\ref{zhengchu}) and  (\ref{zhengchu1}) we have
\begin{equation*}\begin{split}
&\Gamma^{k+1}(\overline{T}_2)+\mu\int_0^{\overline{T}_2} |\nabla\overline{u}^{k+1} |^2_2\text{d}t\\
\leq& \int_0^{\overline{T}_2} G^k_{\eta} \Gamma^{k+1}(t)\text{d}t+3\eta\int_0^{\overline{T}_2}|\nabla\overline{u}^{k}(t) |^2_2\text{d}t+ \eta \overline{T}_2 \sup_{0\leq t \leq \overline{T}_2}\|\overline{I}^{k}(t)\|^2_{L^2(\mathbb{R}^+\times S^2;L^2(\mathbb{R}^3))}
\end{split}
\end{equation*}
for some $G^k_{\eta}$ such that  $\int_{0}^{t}G^k_\eta(s)\text{d}s\leq \widehat{C}+\widehat{C}_{\eta} t$ for $0\leq t \leq \overline{T}_2$.
By using Gronwall's inequality, we have
\begin{equation*}\begin{split}
&\Gamma^{k+1}(\overline{T}_2)+ \mu\int_{0}^{\overline{T}_2}|\nabla\overline{u}^{k+1}|^2_2\text{d}s\\
\leq & \exp{(\widehat{C}+\widehat{C}_{\eta}t)}\Big(3\eta\int_0^{\overline{T}_2}|\nabla\overline{u}^{k}(t) |^2_2\text{d}t+ \eta \overline{T}_2 \sup_{0\leq t \leq \overline{T}_2}\|\overline{I}^{k}(t)\|^2_{L^2(\mathbb{R}^+\times S^2;L^2(\mathbb{R}^3))}\Big).
\end{split}
\end{equation*}
Since $0< \overline{T}_2\leq 1$, we first choose $\eta=\eta_0$ small enough such that
$$
3\eta_0\exp(\widehat{C})\leq \min\Big\{\frac{1}{8},\ \frac{\mu}{8}\Big\},
$$
then we choose $\overline{T}_2=T_*$ small enough such that
$$ 3\eta_0\exp(\widehat{C}_{\eta_0} T_*)\leq 4.$$
So, when $\Gamma^{k+1}=\Gamma^{k+1}(T_*)$, we have
\begin{equation*}\begin{split}
\sum_{k=1}^{\infty}\Big( \Gamma^{k+1}(T_*)+\mu\int_{0}^{T_*} |\nabla\overline{u}^{k+1}|^2_2\text{d}s\Big)\leq \widehat{C}<+\infty.
\end{split}
\end{equation*}
Therefore, the Cauchy sequence $(I^k,\rho^k,u^k)$ converges to a limit $(I,\rho,u)$ in the following strong sense:
\begin{equation}\label{strong}
\begin{split}
&I^k\rightarrow I \ \text{in}\ L^\infty([0,T_*];L^2(R^+\times S^2;L^2(\mathbb{R}^3))),\\
&\rho^k\rightarrow \rho \ \text{in}\ L^\infty([0,T_*];L^2(\mathbb{R}^3)),\\
&u^k\rightarrow u\ \text{in}\ L^2([0,T_*];\mathbb{D}^1(\mathbb{R}^3)).
\end{split}
\end{equation}
Thanks to the local uniform  estimate (\ref{pri}), the strong convergence in (\ref{strong}) and the lower semi-continuity of norms, we also have $(I,\rho, u)$ still satisfies the a priori estimates (\ref{pri}).
Then it is easy to show that $(I,\rho,u)$ is a weak solution in the sense of distribution satisfying the a priori estimates (\ref{pri}).\\[2mm]
\underline{Step 2}. The uniqueness  of strong solutions. Let $(I_1,\rho_1,u_1)$ and $(I_2,\rho_2,u_2)$ be two strong solutions to Cauchy problem (\ref{eq:1.2})-(\ref{eq:2.2hh}) satisfying the a priori estimates (\ref{pri}). Denote
$$
\overline{I}=I_1-I_2,\quad  \overline{\rho}=\rho_1-\rho_2,\quad \overline{p}_m=p_m(\rho_1)-p_m(\rho_2),\quad  \overline{u}=u_1-u_2.
$$
Let
$$
\Gamma(t)=\|\overline{I}(t)\|^2_{L^2(\mathbb{R}^+\times S^2;L^2(\mathbb{R}^3))}+|\overline{\rho}(t)|^2_{ 2}+|\sqrt{\rho}_1\overline{u}|^2_2.
$$
By the same method for deriving (\ref{go64})-(\ref{gogo1}), we similarly have
\begin{equation}\label{gonm}\begin{split}
\frac{\text{d}}{\text{d}t}\Gamma(t)+\mu|\nabla \overline{u}|^2_2\leq H(t)\Gamma (t),
\end{split}
\end{equation}
where $ \int_{0}^{t}H(s)ds\leq \widehat{C}$, for $t\in [0,T_*]$.  Then from the Gronwall's inequality, we immediately conclude that
$$\overline{I}=\overline{\rho}=\nabla \overline{u}=0,$$
Since $\overline{u}(t,x)\rightarrow 0$ as $|x|\rightarrow \infty$, the uniqueness follows.\\[2mm]
\underline{Step 3}. The time-continuity of the strong solution. It can be obtained by the same method used in the proof of Lemma \ref{lem1}.

\end{proof}

\subsubsection{\bf{ Case $\overline{\rho}=0$.}}\  \\

\begin{proof}Multiplying the first equation in (\ref{eq:1.2w}) by $\text{sign}(\overline{\rho}^{k+1})|\overline{\rho}^{k+1}|^{\frac{1}{2}}$ and integrating over $\mathbb{R}^3$, we have
\begin{equation}\label{go65e}\begin{cases}
\displaystyle \frac{\text{d}}{\text{d}t}|\overline{\rho}^{k+1}|^2_{3/2}\leq B^k_\eta(t)|\overline{\rho}^{k+1}|^2_{3/2}+\eta |\nabla\overline{u}^k|^2_2,\\[8pt]
\displaystyle
B^k_\eta(t)=C\left(|\nabla u^k|_{W^{1,q}}+\frac{1}{\eta}|\rho^{k}|^2_{H^1}\right),\ \text{and} \ \int_0^t B^k_\eta(s)\text{d}s\leq \widehat{C}+\widehat{C}_{\eta}t,
\end{cases}
\end{equation}
 for $t\in [0,\overline{T}_1]$.
Similarly, we can establish the estimates
\begin{equation}\label{go64e}\begin{cases}
\displaystyle \frac{\text{d}}{\text{d}t}|\overline{\rho}^{k+1}|^2_2\leq A^k_\eta(t)|\overline{\rho}^{k+1}|^2_2+\eta |\nabla\overline{u}^k|^2_2,\\[8pt]
\displaystyle \frac{\text{d}}{\text{d}t}\|\overline{I}^{k+1}\|^2_{L^2(\mathbb{R}^+\times S^2;L^2(\mathbb{R}^3))}\\[8pt]
\leq D^k_\eta(t)\|\overline{I}^{k+1}\|^2_{L^2(\mathbb{R}^+\times S^2;L^2(\mathbb{R}^3))}+|\overline{\rho}^{k+1}|^2_2+\eta\|\overline{I}^{k}\|^2_{L^2(\mathbb{R}^+\times S^2;L^2(\mathbb{R}^3))},\\[8pt]
\displaystyle \frac{1}{2}\frac{\text{d}}{\text{d}t}|\sqrt{\rho}^{k+1}\overline{u}^{k+1}|^2_2+\mu\|\nabla\overline{u}^{k+1} \|^2\\[8pt]
\leq F^k_\eta(t)|\sqrt{\rho}^{k+1}\overline{u}^{k+1}|^2_2+F^k_2(t)|\overline{\rho}^{k+1}|_{L^2\cap L^{\frac{3}{2}}}+M(C(c_0,c_1))|\overline{p}^{k+1}_m|^2_2\\[8pt]
\displaystyle
+F^k_3(t)\|\overline{I}^{k+1}\|^2_{L^2(\mathbb{R}^+\times S^2;L^2(\mathbb{R}^3))}
+\eta|\nabla\overline{u}^{k}|^2_2+C\int_{\mathbb{R}^3}|\overline{\rho}^{k+1}||u^k_t||\overline{u}^{k+1}|\text{d}x,
\end{cases}
\end{equation}
for $t\in [0,\overline{T}_1]$, and from the local a priori estimate (\ref{pri}) we have
\begin{equation*}\begin{split}
\int_0^t \big(A^k_\eta(s)+D^k_\eta(s)+F^k_\eta(s)+F^k_2(s)+F^k_3(s)\big)\text{d}s\leq \widehat{C}+\widehat{C}_\eta t.
\end{split}
\end{equation*}
According to (\ref{go65e}), the key term can be estimated by
\begin{equation}\label{go6ge}\begin{split}
&\int_{\mathbb{R}^3}|\overline{\rho}^{k+1}||u^k_t||\overline{u}^{k+1}|\text{d}x
\leq C|\nabla u^k_t |^2_2|\overline{\rho}^{k+1} |^2_{3/2}+\frac{1}{8}\mu|\overline{u}^{k+1}|^2_2.
\end{split}
\end{equation}
We can also define the energy function by
\begin{equation*}\begin{split}
\Gamma^{k+1}=&\sup_{0\leq t \leq \overline{T}_1}\Big(\|\overline{I}^{k+1}(t)\|^2_{L^2(\mathbb{R}^+\times S^2;L^2(\mathbb{R}^3))}+|\overline{\rho}^{k+1}(t)|^2_{
 2}+|\sqrt{\rho}^{k+1}\overline{u}^{k+1}(t)|^2_2\Big).
\end{split}
\end{equation*}
Then from (\ref{go65e})-(\ref{go6ge}) and Gronwall's inequality, we  easily obtain
\begin{equation*}\begin{split}
&\Gamma^{k+1}(\overline{T}_1)+\mu\int_0^{\overline{T}_1} |\nabla\overline{u}^{k+1} |^2_2\text{d}t\\
\leq& \int_0^{\overline{T}_1} G^k_{\eta} \Gamma^{k+1}(t)\text{d}t+3\eta\int_0^{\overline{T}_1}|\nabla\overline{u}^{k}(t) |^2_2\text{d}t+ \eta \overline{T}_1 \sup_{0\leq t \leq \overline{T}_1}\|\overline{I}^{k}(t)\|^2_{L^2(\mathbb{R}^+\times S^2;L^2(\mathbb{R}^3))}
\end{split}
\end{equation*}
for some $G^k_{\eta}$ such that  $\int_{0}^{t}G^k_\eta(s)\text{d}s\leq \widehat{C}+\widehat{C}_{\eta} t$ for $0\leq t \leq \overline{T}_1$.

The rest of the proof are analogous to the proof for the case $\overline{\rho}>0$. We omit the details here.
\end{proof}
Thus the proof of Theorem \ref{th1} is finished.

\section{Necessity and sufficiency of the initial layer compatibility condition}

We prove Theorem \ref{th2} in this section, that is, the initial layer compatibility condition is not only sufficient but also necessary if the initial vacuum set is not very irregular. Since the strong solution $(I,\rho,u)$ only satisfies the Cauchy problem in the sense of distribution, we only have
$$I(v,\Omega,0,x)=I_0,\ \rho(0,x)=\rho_0,\ \rho u(0,x)=\rho_0 u_0,\ x\in \mathbb{R}^3.$$
 So, the key point of the proof is to make sure that  the relation $u(0,x)=u_0$  holds in the vacuum domain. Now we give the proof of Theorem \ref{th2}.

\begin{proof} We prove the necessity and sufficiency, respectively.\\
\underline{\bf{Step 1: to prove the necessity.}} Let $(I,\rho,u)$ be a strong solution of the Cauchy problem  (\ref{eq:1.2})-(\ref{eq:2.2hh}) with the regularity as shown in Definition \ref{strong1}. Then from the momentum equations in  (\ref{eq:1.2}) we have
\begin{equation}\label{da9}\begin{split}
Lu(t)+\nabla p_{m}(t)+\frac{1}{c}\int_0^\infty \int_{S^2}A_r(t)\Omega \text{d}\Omega \text{d}v=\sqrt{\rho}(t) G(t)
\end{split}
\end{equation}
for  $0\leq t \leq T_*$, where $G(t)=\rho^{\frac{1}{2}}(t) (-u_t-u\cdot\nabla u)$. Since
$$
\sqrt{\rho}u_t\in  L^\infty([0,T_*];L^2),\quad  \sqrt{\rho}u\cdot\nabla u \in  L^\infty([0,T_*];L^2),
$$
we have $G(t)\in  L^\infty([0,T_*];L^2)$. So there exists a sequence $\{t_k\}$, $t_k\rightarrow 0$, such that
$$
G(t_k)\rightarrow g \quad \text{in} \quad L^2 \quad \text{for some} \quad g\in L^2.
$$
Taking $t=t_k\rightarrow 0$ in (\ref{da9}), we obtain
\begin{equation}\label{jojo}\begin{split}
Lu(0)+\nabla p_{m}(\rho(0))+\frac{1}{c}\int_0^\infty \int_{S^2}A_r(0)\Omega \text{d}\Omega \text{d}v=\sqrt{\rho}(0) g.
\end{split}
\end{equation}
Together with the strong convergence (\ref{coco}) and the construction of our strong solutions, initial layer compatibility condition \eqref{kkkkk} holds with $g_1=g$.\\
\underline{\bf{Step 2: to prove the sufficiency.}} Let $(I_0,\rho_0,u_0)$ be the initial data satisfying (\ref{gogo})-(\ref{kkkkk}). Then there exists a unique strong solution $(I,\rho,u)$ to the Cauchy problem  (\ref{eq:1.2})-(\ref{eq:2.2hh}) with the regularity
\begin{equation*}
\begin{split}
&I(v,\Omega,t,x)\in L^2(\mathbb{R}^+\times S^2;C([0,T_*];H^1\cap W^{1,q}(\mathbb{R}^3))), \\
&\rho(t,x)-\overline{\rho}\in C([0,T_*];H^1\cap W^{1,q}), \ u(t,x)\in C([0,T_*];\mathbb{D}^1\cap D^2(\mathbb{R}^3)).
\end{split}
\end{equation*}
So we only need to verify the initial conditions
$$
I(v,\Omega,0,x)=I_0,\ \rho(0,x)=\rho_0,\ u(0,x)=u_0(x), \ x\in \mathbb{R}^3.
$$
From the weak formulation of the strong solution, it is easy to know that
$$
I(v,\Omega,0,x)=I_0,\ \rho(0,x)=\rho_0,\ \rho(0,x)u(0,x)=\rho_0u_0, \ x\in \mathbb{R}^3.
$$
So it remains to prove that $u(0,x)=u_0(x), \ x\in V$. Let $\overline{u}_0(x)=u_0(x)-u(0,x)$. According to the proof of the necessity, we know that $(I(v,\Omega,0,x),\rho(0,x),u(0,x))$ also satisfies the relation (\ref{kkkkk}) for $g_1\in L^2$. Then $\overline{u}_0\in \mathbb{D}^1_0(V)\cap D^2(V)$ is the unique solution of the elliptic problem (\ref{zhen101}) in $V$ and thus $\overline{u}_0=0$ in $V$, which implies that
$u(0,x)=u_0(x), \ x\in V$.
\end{proof}

Finally we remark that, for a special case that the mass density $\rho(t,x)= 0$  only holds in some single point or only decay in the far field, $u(0,x)=u_0$ obviously hold according to our proof of the sufficiency.

\section{Blow-up criterion of strong solutions}

In this section, we prove Theorem \ref{th3} in which we establish a blow-up criterion for strong solutions. Firstly we define the following two auxiliary quantities:
\begin{equation*}\begin{split}
\Phi(t)=&1+\|I\|_{L^2(\mathbb{R}^+\times S^2; C([0,t];H^1\cap W^{1,q}(\mathbb{R}^3)))}+\sup_{0\leq s\leq t}\|\rho(s)-\overline{\rho}\|_{H^1\cap W^{1,q}}+\sup_{0\leq s\leq t}|u(s)|_{\mathbb{D}^1},\\[6pt]
\Theta(t)=&1+\|I\|_{L^2(\mathbb{R}^+\times S^2; C([0,t];H^1\cap W^{1,q}(\mathbb{R}^3)))}+\|I_t\|_{L^2(\mathbb{R}^+\times S^2; C([0,t];L^2\cap L^q(\mathbb{R}^3)))}\\[6pt]
&+\|\rho(t)-\overline{\rho}\|_{H^1\cap W^{1,q}}+|\rho_t(t)|_{L^2\cap L^q}
\\
&+|u(t)|_{\mathbb{D}^1\cap D^2}+|\sqrt{\rho}u_t(t)|_2+\int_{0}^{t}\Big(|u(s)|^2_{D^{2,q}}+|u_t(s)|^2_{\mathbb{D}^1}\Big)\text{d}s.
\end{split}
\end{equation*}
Then according to the definition of the maximal existence time $\overline{T}$ of the local strong solution in Definition (\ref{strong1}), we know that
\begin{equation}\label{up}\begin{split}
\Theta(t)\rightarrow +\infty, \quad \text{as} \quad t\rightarrow \overline{T}.
\end{split}
\end{equation}
Based on (\ref{up}), our purpose in the following proof is to show that
\begin{equation}\label{ga1}\begin{split}
\Phi(t)\rightarrow+\infty, \quad \text{as} \quad t\rightarrow \overline{T}.
\end{split}
\end{equation}

\textbf{Now we give the proof for  Theorem {\ref{th3}}.}
\begin{proof}
Firstly, let $(I,\rho,u)$ be the unique strong solution of the Cauchy problem  (\ref{eq:1.2})-(\ref{eq:2.2hh}) with the regularities shown in Definition \ref{strong1}. Then from similar arguments as shown in Lemmas \ref{lem:2} and \ref{lem:3}, for $0< t < \overline{T}$, we easily get
\begin{equation}\label{vb1}
\qquad \qquad \|\rho(t)-\overline{\rho}\|_{H^1\cap W^{1,q}}\leq C\Big(1+\int_0^t \|\nabla u\|_{H^1\cap W^{1,q}}\text{d}s\Big) \exp\Big(\int_0^t \|\nabla u\|_{W^{1,q}}\text{d}s\Big),
\end{equation}
and
\begin{equation}\label{vb1qq}
\quad \|I\|^2_{L^2(\mathbb{R}^+\times S^2; C([0,t]; H^1\cap W^{1,q}(\mathbb{R}^3)))}
\leq  \exp\big(t M(\Phi(t)) \big)\big(1+tM(\Phi(t))\big).
\end{equation}
Directly from the continuity equation and  the radiation transfer equation in (\ref{eq:1.2}),  for $0< t < \overline{T}$, we deduce that
\begin{equation}\label{vb2}\begin{cases}
|(\rho_t,(p_m)_t)(t)|_{2}\leq M(|\rho(t)|_\infty)\big(1+|\rho(t)|_\infty |\nabla u(t)|_{2} +|u(t)|_6 |\nabla\rho(t)|_{3}\big),\\[6pt]
|(\rho_t,(p_m)_t)(t)|_{q}\leq M(|\rho(t)|_\infty)\big(1+|\rho(t)|_\infty |\nabla u(t)|_{q} +|u(t)|_\infty |\nabla\rho(t)|_q\big),\\[6pt]
%
%
\|I_t(t)\|_{L^2(\mathbb{R}^+\times S^2; C([0,t];L^2\cap L^q(\mathbb{R}^3)))}\leq M(\Phi(t)).
\end{cases}
\end{equation}

Secondly, for the velocity vector $u$ of the fluid, standard energy estimates as shown in Lemma \ref{lem:4} lead to
\begin{equation}\label{vb4}\begin{split}
&\frac{1}{2}\frac{\text{d}}{\text{d}t}\int_{\mathbb{R}^3}\rho |u_t|^2 \text{d}x+\int_{\mathbb{R}^3}\big(\mu|\nabla u_t|^2+(\lambda+\mu)|\text{div}u_t|^2\big)\text{d}x\\
\leq& C\big(1+|\rho|^3_{\infty}|\nabla u|^4_2 \big)|\sqrt{\rho}u_t|^2_{2}+C\big(1+|\rho_t|^2_{2}|\nabla u|^4_2 +|(p_m)_t|^2_2\big)\\
&+ M(|\rho|_\infty)\big(1+|\rho|_\infty)\big(1+|\rho_t|^2_2\|I(t)\|^2_{L^2(\mathbb{R}^+\times S^2; L^2(\mathbb{R}^3))}+\|I_t(t)\|^2_{L^2(\mathbb{R}^+\times S^2; L^2(\mathbb{R}^3))}\big).
\end{split}
\end{equation}
Integrating (\ref{vb4}) over $(\tau,t)$ for $\tau\in (0,t)$, and according to (\ref{vb1})-(\ref{vb2}),  we  easily have
\begin{equation}\label{vb7}
\begin{split}
|\sqrt{\rho}u_t(t)|^2_{2}+\int_{\tau}^{t}|\nabla u_t|^2_{2}\text{d}s
\leq&C+|\sqrt{\rho}u_t(\tau)|^2_{2}
+C\int_{\tau}^{t}(1+|\sqrt{\rho}u_t|^2_{2})M(\Phi)\text{d}s.
\end{split}
\end{equation}
According to Gronwall's inequality, we have
\begin{equation}\label{vb11}
\begin{split}
|\sqrt{\rho}u_t(t)|^2_{2}+\int_{\tau}^{t}|\nabla u_t(s)|^2_{2}\text{d}s
\leq&C\Theta (\tau)\exp\big(\overline{T}M(\Phi(t))\big).
\end{split}
\end{equation}

Thirdly, we consider the higher order terms  $|u|_{D^2}$ and $|\nabla u|_{W^{1,q}}$.
From the standard  elliptic regularity estimates and Minkowski inequality, we have
\begin{equation*}
\begin{split}
| u|_{D^2}\leq & C\Big(|\rho u_t|_2+|\rho u\cdot\nabla u|_2+|\nabla p_m|_2+\int_0^\infty \int_{S^2}|A_r|_2 \text{d}\Omega \text{d}v\Big)\\
\leq & C\big(1+|\rho|^{\frac{1}{2}}_\infty |\sqrt{\rho}u_t|_{2}+|\rho|_\infty | u|^{\frac{3}{2}}_{\mathbb{D}^1}| u|^{\frac{1}{2}}_{D^2}+M(|\rho|_\infty)(|\rho|_2+\|I\|_{L^2(\mathbb{R}^+\times S^2; L^2(\mathbb{R}^3))})\big).
\end{split}
\end{equation*}
This implies, by Young's inequality,  that,
\begin{equation}\label{vb5}
\begin{split}
|  u(t)|_{\mathbb{D}^1\cap D^2}\leq & C\big(1+|\sqrt{\rho} u_t(t)|_2\big)M(\Phi(t))
\end{split}
\end{equation}
for $t\in (\tau,\overline{T})$.
Similarly, due to (\ref{vb5}), for $t\in (\tau,\overline{T})$, we have
\begin{equation}\label{vb6}
\begin{split}
 |u(t)|_{D^{2,q}}
\leq C\big( (1+|\sqrt{\rho} u_t(t)|^2_2)M(\Phi(t))+|\nabla u_t(t)|_2\big).
\end{split}
\end{equation}

Finally,  we combine (\ref{vb1})-(\ref{vb6}) and conclude that for each $t\in (\tau,\overline{T})$,
\begin{equation}\label{vb12}
\begin{split}
\Theta(t)
\leq&C(1+\overline{T})(1+\Theta (\tau))^2 M(\Phi(t))\exp\big(\overline{T}M(\Phi(t))\big).
\end{split}
\end{equation}
From (\ref{up}), the blow-up criterion \eqref{blowcr} as shown in Theorem \ref{th3} follows immediately by letting $t\rightarrow \overline{T}$ in (\ref{vb12}).

\end{proof}

\bigskip

{\bf Acknowledgement:} The research of  Y. Li and S. Zhu were supported in part
by National Natural Science Foundation of China under grant 11231006 and Natural Science Foundation of Shanghai under grant 14ZR1423100. S. Zhu was also supported by China Scholarship Council.

\bigskip

\end{document}